\newcommand{\noi} {\noindent}
\theoremstyle{plain}
\newtheorem{thm}{Theorem}[section]
\theoremstyle{plain}
\newtheorem{lem}[thm]{Lemma}
\newtheorem{prop}[thm]{Proposition}
\newtheorem{cor}[thm]{Corollary}
\theoremstyle{definition}
\newtheorem{rem}{Remark}[section]
\newcommand{\De} {\Delta}
\newcommand{\la} {\lambda}
\newcommand{\rn}{\mathbb{R}^{N}}
\newcommand{\bn}{\mathbb{B}^{N}}
\newcommand{\vn}{\mathrm{~d} V_{\mathbb{B}^{N}}(x)}
\newcommand{\vnn}{\mathrm{~d} V_{\mathbb{B}^{N}}}
\newcommand{\authorfootnotes}{\renewcommand\thefootnote{\@fnsymbol\c@footnote}}%
\def\e{{\text{e}}}
\numberwithin{equation}{section} \allowdisplaybreaks
\begin{document}\title[Multiplicity of positive solutions ]
	{Multiplicity of positive solutions for a class of nonhomogeneous elliptic equations in the hyperbolic space}

	\author[Debdip Ganguly]{Debdip Ganguly}
	\address{ Department of Mathematics, Indian Institute of Technology Delhi, Hauz Khas New Delhi 110016,  India}
	\email{debdip@maths.iitd.ac.in}
	
	\author[Diksha Gupta]{Diksha Gupta}
	\address{ Department of Mathematics, Indian Institute of Technology Delhi, Hauz Khas New Delhi 110016,  India}
	\email{dikshagupta1232@gmail.com}
	
	\author[ K.~Sreenadh]{K.~Sreenadh}
	\address{ Department of Mathematics, Indian Institute of Technology Delhi, Hauz Khas New Delhi 110016,  India}
	\email{sreenadh@maths.iitd.ac.in}

	\date{\today}
	\subjclass[2010]{Primary: 35J20, 35J60, 58E30}
	\keywords{Hyperbolic space, hyperbolic bubbles,  Palais-Smale decomposition, Mountain pass
		geometry, Lusternik-Schnirelman Category theory, energy estimate,  min-max
		method.}

	\begin{abstract}
		The paper is concerned with positive solutions to problems of the
		type 
		\begin{equation*}
			-\Delta_{\bn} u - \lambda u = a(x) |u|^{p-1}\;u  \, + \, f \, \;\;\text{in}\;\mathbb{B}^{N}, \quad
			u \in H^{1}{(\mathbb{B}^{N})},
	\end{equation*}
	where $\mathbb{B}^N$ denotes the hyperbolic space,  $1<p<2^*-1:=\frac{N+2}{N-2}$, $\;\lambda < \frac{(N-1)^2}{4}$, 
	and $f \in H^{-1}(\bn)$ ($f \not\equiv 0$) is a non-negative functional. The potential $a\in L^\infty(\mathbb{B}^N)$ is assumed to be strictly positive, such that $\lim_{d(x, 0) \rightarrow \infty} a(x) \rightarrow 1,$ where $d(x, 0)$ denotes the geodesic distance. First, the existence of three positive solutions is proved under the assumption that $a(x) \leq 1$. Then the case  $a(x) \geq 1$ is considered, and the existence of two positive solutions is proved. In both cases, it is assumed that 
	$\mu( \{ x : a(x) \neq 1\}) > 0.$ Subsequently, we establish the existence of two positive solutions for $a(x) \equiv 1$ and prove asymptotic estimates for positive solutions 
	using barrier-type arguments. The proofs for existence combine variational arguments, key energy estimates involving 
	\it hyperbolic bubbles.
\end{abstract}

\maketitle
\section{Introduction}
In this paper, we aim to study the existence, multiplicity and asymptotic estimates of solutions to the following elliptic problem on the hyperbolic space $\mathbb{B}^{N}$
\begin{equation}
	\left.\begin{aligned}
		-\Delta_{\bn} u- \lambda u &=a(x)|u|^{p-1} u+f(x) \text { in } \mathbb{B}^{N},\\
		u &>0 \;\text { in } \mathbb{B}^{N}, \\
		u & \in H^{1}\left(\mathbb{B}^{N} \right),
	\end{aligned}\right\}
	\label{4.aa} \tag{$\mathcal{P}$}
\end{equation}
where  $1<p<2^*-1:=\frac{N+2}{N-2}$, if $N \geqslant 3; 1<p<+\infty$, if $N = 2,\;\lambda < \frac{(N-1)^2}{4},$ $H^{1}\left(\mathbb{B}^{N}\right)$ denotes the Sobolev space 
on the disc model of the hyperbolic space $\mathbb{B}^{N},$ $\Delta_{\mathbb{B}^{N}}$ denotes the Laplace Beltrami operator on $\mathbb{B}^{N},$ $\frac{(N-1)^2}{4}$ 
being the bottom of the $L^2-$ spectrum of $-\Delta_{\bn},$ and $a(x) \in L^{\infty}(\bn).$ Further, $0<a \in L^{\infty}\left(\mathbb{B}^{N}\right),$ and $0 \not \equiv f \in H^{-1}\left(\mathbb{B}^{N}\right)$ is a non-negative functional i.e., $f(u) \geq 0$ whenever $u \geq 0$. Let us postpone the discussion on the technical assumptions of  function $a(x)$ for a while.

\medskip

If the hyperbolic space $\bn$ is replaced with the Euclidean space $\mathbb{R}^N,$ i.e., when the equation \eqref{4.aa} is posed on $\mathbb{R}^N$ with $f \equiv 0,$  has been 
investigated widely in the last few decades, and several seminal results have been obtained, we name a few,  e.g., \cite{Bahri-Li, Bahri, BL1, BL2, DI, DN, Lions1, Lions}, and this list is far from being complete. The difficulty in treating this problem arises because the domain $\mathbb{R}^N$ is unbounded, and standard variational methods would fail due to the lack of compactness of  Sobolev embedding even in the subcritical regime. So to tackle such issues, several authors have introduced new tools, particularly the papers mentioned above. Firstly, the existence of \it Ground state \rm is established by using delicate energy estimates and carefully analysing the breaking levels of Palais-Smale sequences (see \cite{Bahri-Li}); we also refer to \cite{CG1} for a comprehensive treatment of the problem in the last thirty years. Then onwards, the question of the multiplicity of solutions came into prominence for slightly modified problems in the Euclidean space $\mathbb{R}^N,$ 
\begin{equation}
	\left.\begin{aligned}
		-\Delta u + a(x) u &= |u|^{p-1} u \text { in } \mathbb{R}^{N},\\
		u & \in H^{1}\left(\mathbb{R}^{N} \right),
	\end{aligned}\right\}
	\label{4.aa'} \tag{$\mathcal{EP}$}
\end{equation}
where the potential $a(x) \rightarrow a_{\infty} > 0$ as $|x| \rightarrow \infty.$ Under the radially symmetric assumption on $a(x),$ existence of infinitely many solutions was 
obtained by Berestycki-Lions in \cite{BL2}. Moreover, the question is even more interesting when the symmetric assumption on the potential $a(x)$ is dropped. However, considerable progress has also been made in the case in which $a(x)$ is not radially symmetric. In fact, the existence of infinitely many positive solutions is obtained 
in \cite{CG3}. Also, see \cite{CG2, CG4, CW, AM, MMP}. 

\medskip 

Adachi-Tanaka \cite{Adachi} considered Eq. \eqref{4.aa} in the whole Euclidean space, with $\lambda = -1$ , and studied the multiplicity results. In fact, the problem \eqref{4.aa} is considered 
as a perturbation of the classical scalar field equation. From the mathematical point of view, it is natural to ask whether the problem \eqref{4.aa} admits a positive solution and if yes, then its multiplicity/uniqueness, i.e., whether the positive solutions are stable after the perturbation of type \eqref{4.aa} is studied. These questions were quite comprehensively studied by Adachi-Tanaka \cite{Adachi}. Also, refer to  \cite{AD, AD2}. In \cite{Adachi}, the existence of four solutions has been obtained under the hypothesis $({\mathbf{A}_1})$ below.   Moreover, in \cite{CZ, LJ}, the existence of two positive solutions is established when the potential $a$ satisfies $({\mathbf{A}_2})$, and $f \not\equiv 0$ (but small).  Although, the cases $({\mathbf{A}_1})$ and $({\mathbf{A}_2})$ do not cover the case $a(x) \equiv 1,$ Zhu treated this case in \cite{XZ}, where he proved existence of two positive solutions. The papers mentioned above employ topological arguments, like Lusternik-Schnirelmn (L-S) category and the Min-Max arguments, to obtain their multiplicity results. But for such arguments to work,  precise energy estimates of solutions to the ``limiting problem" are required so that we are away from the critical level (breaking level) of the Palais-Smale sequences.  By the ``limiting problem," we mean the following problem
\begin{equation}\label{euc}
	-\Delta_{\mathbb{R}^N}u \, + \, u \, = \, u^p \quad \mbox{in} \ \mathbb{R}^N, \quad u \in H^{1}(\mathbb{R}^N), \quad u > 0  \quad \mbox{in} \ \mathbb{R}^N.
\end{equation}
It is well-known that the above problem admits unique radially symmetric solutions $W \in C^{\infty}(\mathbb{R}^N)$ up to translations. Furthermore, it satisfies 

$$
W(x) \sim |x|^{\frac{-(N-1)}{2}} \, e^{-|x|} \quad \mbox{as} \ |x| \rightarrow \infty.
$$ 
In particular, $W \in L^{p}(\mathbb{R}^N)$ for all $p \geq 1.$ As described, the energy estimates in the papers mentioned earlier were involved with integrals of $W$, and this decay
estimate plays a pivotal role in it.

\medskip 

Now coming back to our problem \eqref{4.aa} in the hyperbolic space setting, even if it seems that the equation is a generalization of problems in the Euclidean space, it has many 
fascinating phenomena. Let us start with the seminal result of Sandeep-Mancini \cite{MS}, where the author showed the existence/uniqueness of positive solutions to the problem 
\begin{equation}\label{hsm}
	-\Delta_{\mathbb{B}^{N}} u \, - \, \lambda u \, = \, |u|^{p-1}u, \quad u \in H^{1}\left(\mathbb{B}^{N}\right),
\end{equation}
where $\lambda \leq \frac{(N-1)^2}{4},$ $1 < p \leq \frac{N+2}{N-2}$ if $N \geq 3; $ $1 < p < \infty$ if $N=2.$ 
They established in the subcritical 
case, i.e., $p > 1$ if $N =2$ and $1 < p < 2^{\star} -1$ if $N \geq 3,$  the problem \eqref{hsm} has a positive solution if and only if $\lambda < \frac{(N-1)^2}{4}.$ These 
positive solutions are also shown to be radially symmetric with respect to some point and unique up to hyperbolic isometries, except possibly for $N =2$ and $\lambda > \frac{2(p+1)}{(p+3)^2}.$ Furthermore, the radially symmetric solution $\mathcal{V}$ satisfies the following asymptotic estimates 

$$
\lim_{r \rightarrow \infty} \dfrac{\log \mathcal{V}^2}{r} \;=\; -{(N-1) + \sqrt{(N-1)^2 - 4 \lambda}},
$$
where $r:= d(x, 0)$ denotes the geodesic distance (we refer Section~\ref{Prem} for more details).

\medskip
Concerning the multiplicity of \eqref{hsm}, the existence of infinitely many radial sign-changing solutions, compactness, and non-degeneracy was studied in (\cite{BS, DG1, DG2}). In this article, we are interested in whether the positive solutions still exist under the perturbation of type \eqref{4.aa}. If it exists, then study its asymptotic estimates and multiplicity. In our previous article \cite{DG}, we showed the existence of a positive solution with high energy when $f \equiv 0.$ Here we considered a multiplicity of solutions along the line of previous authors. As one anticipates, we follow the topological /variational arguments to obtain multiple solutions. Still, the major hurdle lies in the energy estimates involving solutions to \eqref{hsm} since one could see easily that $\mathcal{V} \notin L^{p}(\bn)$ for $p \in [1,2) .$ This step is quite delicately handled in Section~\ref{KER}.  Moreover, we also studied asymptotic estimates of solutions to \eqref{4.aa} for $a(x) \equiv 1$ and $f$ satisfies some decay estimates. Indeed the ode approach won't work in this case, as apriori $f$ is not given to be a radial function, and hence we tackle this problem using the barrier argument. (See Section~\ref{Secasymp})

\medskip

Now let us describe all the necessary assumptions before stating our main theorems. We investigate the solutions of \eqref{4.aa} under the following cases separately:
\begin{equation*}
	\begin{aligned}
		\left(\mathbf{A}_{1}\right):& \quad a(x) \in(0,1]\;\; \forall x \in \mathbb{B}^{N}, \;\; \mu( \{ x : a(x) \neq 1\}) > 0, \;\; \inf_{x \in \bn} a(x) > 0, 
		\text { and } \\
		&a(x) \rightarrow 1 \text { as } d(x,0) \rightarrow \infty,\ \mbox{where} \ \mu \ \mbox{denotes the hyperbolic measure}.\\
		\left(\mathbf{A}_{2}\right):& \;\;a(x) \geq 1\;\;\forall x \in \bn,\;\;\mu( \{ x : a(x) \neq 1\}) > 0, \;\; a \in L^{\infty}(\bn) 
		\text{ and } a(x) \rightarrow 1 \\ &  \text{ as }d(x,0) \rightarrow \infty.\\
		\left(\mathbf{A}_{3}\right): &
		\;\; a(x) \equiv 1 \;\; \forall x \in \mathbb{B}^{N}.
	\end{aligned}
\end{equation*}
Further, let us prescribe an assumption on the parameter $\lambda :$
\begin{equation}\label{lambda}\lambda \in \begin{cases}  \left( -\infty, \frac{2(p+1)}{(p+3)^2}\right], & N=2, \\ \left(-\infty, \frac{(N-1)^2}{4} \right), & N\ge 3.\end{cases}\end{equation}
We are now in a position to state this article's main theorems. Let us begin with the Adachi-Tanaka \cite{Adachi} type result in the hyperbolic space setting :

\begin{thm}\label{mainthm1}
	Let $a \in C\left(\mathbb{B}^{N}\right)$ satisfies $\left(\mathbf{A}_{1}\right).$  In addition, assume that $a$ also satisfies 
	\begin{equation}
		a(x) \geqslant 1 -\operatorname{C \, exp}(-\delta \, d(x,0)) \quad \forall x \in \mathbb{B}^{N}, \label{acond}
	\end{equation}
	for some positive constants $C$ and $\delta.$ Then there exists $\delta_{0}>0$ such that  the equation \eqref{4.aa} has at least three positive solutions for any non-negative $f \in H^{-1}\left(\mathbb{B}^{N}\right)$ with $\|f\|_{H^{-1}\left(\mathbb{B}^{N}\right)} \leq \delta_{0}$ and for $\lambda$ satisfying \eqref{lambda}.
	\label{thm4aa}
\end{thm}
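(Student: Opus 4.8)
The plan is to argue variationally with the energy functional
\[
I(u)=\frac12\int_{\bn}\big(|\nabla_{\bn}u|^{2}-\lambda u^{2}\big)\,\vnn-\frac{1}{p+1}\int_{\bn}a(x)\,u_{+}^{p+1}\,\vnn-\langle f,u\rangle ,
\]
on $H^{1}(\bn)$; using $u_{+}^{p+1}$ in place of $|u|^{p+1}$ forces critical points to be nonnegative. Since $\lambda<\frac{(N-1)^{2}}{4}$, the quadratic form $\|u\|^{2}:=\int_{\bn}(|\nabla_{\bn}u|^{2}-\lambda u^{2})\,\vnn$ is equivalent to the standard $H^{1}(\bn)$ norm (Poincar\'e inequality at the bottom of the spectrum), so $I\in C^{1}$, and the subcritical growth $1<p<2^{\ast}-1$ ensures that Palais--Smale sequences are bounded. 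Throughout, $\mathcal V$ denotes the positive ground state of the limiting problem \eqref{hsm} and $m_\infty$ its energy for the autonomous functional obtained by setting $a\equiv1$, $f\equiv0$; this $m_\infty$ is the minimal energy carried by a hyperbolic bubble and governs the loss of compactness of Palais--Smale sequences through bubbles $\mathcal V$ concentrating at the boundary at infinity.

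For the \emph{first} solution I would produce a negative-energy local minimum, exploiting that $f$ is small and nonnegative. For $\|f\|_{H^{-1}}\le\delta_0$ one checks the near-origin geometry $I(0)=0$ and $\inf_{\|u\|=\rho}I\ge\alpha>0$ at a suitable small radius $\rho$ (here $p>1$ lets $\tfrac12\rho^{2}$ dominate the $\rho^{p+1}$ term while $\delta_0$ controls the linear term), whereas $\inf_{\overline B_\rho}I<0$ because moving slightly in the direction of the Riesz representative of $f$ lowers $I$. Ekeland's variational principle on $\overline B_\rho$ then yields an interior minimizer $u_1$, hence a critical point with $I(u_1)<0$; testing against $u_1^{-}$ and using $f\ge0$ together with the strong maximum principle gives $u_1>0$.

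For the \emph{second} solution, the mountain-pass value $c_2=\inf_{\gamma}\max_{t}I(\gamma(t))\ge\alpha>0>I(u_1)$ is a candidate critical level already distinct from $I(u_1)$. The decisive point is compactness: invoking the Palais--Smale decomposition (a $(PS)$ sequence splits as a genuine solution plus finitely many hyperbolic bubbles, each carrying energy at least $m_\infty$), $I$ satisfies $(PS)_c$ for every $c$ strictly below the first bubbling threshold. I would therefore build a competitor path through $u_1$ and a translated bubble $\mathcal V_\xi$, estimate $\max_t I$ along it, and establish the strict inequality placing $c_2$ below that threshold; this delivers a convergent $(PS)$ sequence and a second positive solution $u_2\neq u_1$.

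The \emph{third} solution I would extract from a min-max / Lusternik--Schnirelman category argument over the family of bubble configurations parametrized by centers $\xi\in\bn$: as $\xi$ approaches $\partial\bn$ the energy tends to $I(u_1)+m_\infty$, and the non-contractibility of this family (detected by a barycenter-type map $\bn\to\bn$) forces an additional critical point $u_3$ at a level bracketed strictly between $c_2$ and the threshold, so that $u_3\notin\{u_1,u_2\}$. The crux throughout — and the main obstacle — is keeping every min-max level strictly below the bubbling threshold so that $(PS)$ applies. These estimates cannot be imported from the Euclidean setting: the ground state obeys only $\mathcal V\sim e^{-\frac{N-1+\sqrt{(N-1)^{2}-4\lambda}}{2}\,r}$ and fails to lie in $L^{p}(\bn)$ for $p\in[1,2)$, so the interaction integrals $\int_{\bn}a\,\mathcal V_\xi^{p+1}\,\vnn$, the cross terms between $u_1$ and $\mathcal V_\xi$, and especially the error $\int_{\bn}(1-a)\mathcal V_\xi^{p+1}\,\vnn\ge0$ (which, since $a\le1$, pushes the level \emph{up}) must be controlled by hand. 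This is exactly where hypothesis \eqref{acond}, the exponential rate $a(x)\ge1-Ce^{-\delta d(x,0)}$, enters: matching $\delta$ against the decay rate of $\mathcal V$ renders $\int_{\bn}(1-a)\mathcal V_\xi^{p+1}\,\vnn$ negligible against the gain coming from $f\not\equiv0$, securing the strict inequalities. I expect this energy estimate to be the genuinely hard part of the argument.
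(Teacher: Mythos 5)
Your overall skeleton agrees with the paper's in its main ingredients: a local minimizer $u_1$ with $I_{\la,a,f}(u_1)<0$ for small nonnegative $f$ (the paper's Proposition~\ref{prop4.aa}, proved via strict convexity on a small ball rather than Ekeland, an immaterial difference); the Palais--Smale decomposition into a critical point plus hyperbolic bubbles (Propositions~\ref{prop-4.1} and \ref{prop4.cc}); compactness below the threshold $I_{\la,a,f}(u_1)+I_{\la,1,0}(w)$ (Corollary~\ref{cor1.8}); and the interaction estimate for $u_1+t\,w(\tau_{-y}(x))$, where \eqref{acond} controls $\int_{\bn}(1-a)\,w(\tau_{-y}(x))^{p+1}\vnn$ while the positive gain comes from the bubble tail meeting $u_1>0$ near the origin, hence ultimately from $f\not\equiv0$ (Proposition~\ref{energy-prop}, terms (I) and (II)). Two caveats: in that estimate the paper takes $\delta> c(N,\la)(p+1)+(N-1)$, i.e.\ the decay rate in \eqref{acond} must be large enough that (I) decays faster than the quadratic interaction (II), so ``some positive $\delta$'' is not what the proof actually uses; and ``$(PS)_c$ holds below the first bubbling threshold'' requires knowing $I_{\la,a,f}(u_1)$ is the \emph{smallest} critical value, which the paper verifies in Corollary~\ref{cor1.8}. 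Granting these, your mountain-pass solution $u_2$ is sound; it is essentially how the paper gets the second solution in Theorems~\ref{mainthm2} and \ref{mainthm3}.

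The genuine gap is the third solution. You produce $u_3$ from a separate category/min-max over bubble configurations and assert its level is ``bracketed strictly between $c_2$ and the threshold.'' Nothing in your argument forces the category level to lie strictly above the mountain-pass level: both are min-max values squeezed into the same window $\left(0,\,I_{\la,a,f}(u_1)+I_{\la,1,0}(w)\right)$, and if they coincide your two constructions may return the same critical point, so the count of three solutions fails. The paper avoids this by not using a mountain pass at all: it shows the single sublevel set $\left[J_{\la,a,f}\le I_{\la,a,f}(\mathcal{U}_{a,f})+I_{\la,1,0}(w)-\varepsilon_0\right]$ has Lusternik--Schnirelman category at least $2$ (Proposition~\ref{propcat}), by building $F_R$ from the geodesic sphere $S^{N-1}_{\bn}$ into the sublevel set (a far bubble glued onto $\mathcal{U}_{a,f}$) and a barycenter-type map $G$ back to $S^{N-1}_{\bn}$ with $G\circ F_R$ homotopic to the identity (Lemma~\ref{lemcat}); Proposition~\ref{propcrit} then gives \emph{two} critical points inside that sublevel set, distinct from each other even when the associated min-max levels agree (a critical set of category $\ge 2$ cannot be a singleton), and distinct from $u_1$ since they have positive energy while $I_{\la,a,f}(u_1)\le 0$. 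Note also that your barycenter map should take values in $S^{N-1}_{\bn}$, not in $\bn$: the ball $\bn$ is contractible, so a map $\bn\to\bn$ detects no topology; it is the sphere-valued map that yields the category bound. To repair your proof, either replace your second and third steps by this category argument, or keep the mountain pass and add the standard alternative that coinciding min-max levels force the critical set at that level to have category $\ge 2$, hence to contain at least two points.
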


\begin{rem}
	In contrast with  Adachi-Tanaka \cite{Adachi}, here we obtain the existence of at least three solutions instead of four. This is purely a technical reason for not getting the fourth solution, which can be attributed to the new energy estimates phenomenon in the hyperbolic space. 
\end{rem}

Next, we assume $a(x) \geq 1,$ and we prove the following result : 

\medskip

\begin{thm}\label{mainthm2}
	Let $a$ satisfies $\left(\mathbf{A}_{2}\right), 0 \not \equiv f \in H^{-1}\left(\bn\right)$ is a non-negative functional and $S_{1,\la}$ be defined as in \eqref{3c}.  Furthermore, if 
	\begin{equation*}
		\|f\|_{H^{-1}\left(\mathbb{B}^{N}\right)}<C_{p} S_{1,\la}^{\frac{p+1}{2(p-1)}} \text { where } 
		C_{p}:=\left(p\|a\|_{L^{\infty}\left(\mathbb{B}^{N}\right)}\right)^{-\frac{1}{p-1}}\left(\frac{p-1}{p}\right).
	\end{equation*}
	Then \eqref{4.aa} admits at least two positive solutions for $\lambda$ satisfying \eqref{lambda}.
	\label{thm4bb}
\end{thm}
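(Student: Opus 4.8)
The plan is to produce the two solutions as a local minimum and a mountain pass critical point of the energy functional
\begin{equation*}
	I(u)=\frac12\int_{\bn}\left(|\nabla_{\bn} u|^{2}-\la\, u^{2}\right)\vnn-\frac{1}{p+1}\int_{\bn}a(x)\,(u^{+})^{p+1}\,\vnn-\langle f,u\rangle
\end{equation*}
on $H^{1}(\bn)$, whose natural norm $\|u\|^{2}:=\int_{\bn}\left(|\nabla_{\bn}u|^{2}-\la u^{2}\right)\vnn$ is equivalent to the standard one because $\la<\frac{(N-1)^{2}}{4}$. Using the positive part $(u^{+})^{p+1}$ forces every critical point to be non-negative: testing $I'(u)=0$ against $u^{-}$ and using that $f$ is a non-negative functional gives $\|u^{-}\|^{2}=\langle f,u^{-}\rangle\le 0$, so $u\ge 0$, and the strong maximum principle upgrades this to $u>0$. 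With $S_{1,\la}$ the sharp constant in \eqref{3c}, H\"older and Sobolev yield $I(u)\ge \frac12\|u\|^{2}-\frac{\|a\|_{L^{\infty}(\bn)}}{p+1}S_{1,\la}^{-\frac{p+1}{2}}\|u\|^{p+1}-\|f\|_{H^{-1}(\bn)}\|u\|$.

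First I would extract the local minimum. Examining the scalar map $t\mapsto \frac12 t^{2}-\frac{\|a\|_{L^{\infty}(\bn)}}{p+1}S_{1,\la}^{-\frac{p+1}{2}}t^{p+1}-\|f\|_{H^{-1}(\bn)}t$, the hypothesis $\|f\|_{H^{-1}(\bn)}<C_{p}S_{1,\la}^{\frac{p+1}{2(p-1)}}$ is exactly the threshold guaranteeing radii $0<\rho_{0}<\rho_{1}$ and $\alpha>0$ with $I\ge\alpha$ on $\|u\|=\rho_{1}$; meanwhile $f\ge0$, $f\not\equiv0$ gives $I(tv)<0$ for a suitable $v\ge0$ and small $t>0$, so $c_{0}:=\inf_{\|u\|\le\rho_{1}}I(u)<0$. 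Ekeland's variational principle produces a $(PS)_{c_{0}}$ sequence realizing this infimum; since its weak limit lies in $\overline{B}_{\rho_{1}}$ and every escaping bubble would carry energy $m_{\la}:=\frac{p-1}{2(p+1)}S_{1,\la}^{\frac{p+1}{p-1}}>0$ (the least energy of \eqref{hsm}), the energy balance of the Palais-Smale decomposition forbids bubbling at the negative level $c_{0}$, so the sequence converges strongly to a first solution $u_{1}>0$ with $I(u_{1})=c_{0}<0$.

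For the second solution I would run a mountain pass over $u_{1}$. Taking $e=u_{1}+R\,\mathcal{V}_{y}$ with $R$ large, where $\mathcal{V}_{y}:=\mathcal{V}\circ\tau_{y}$ is the ground state $\mathcal{V}$ of \eqref{hsm} transported by a hyperbolic isometry $\tau_{y}$, one has $I(e)<c_{0}$; a standard argument shows $u_{1}$ is a strict local minimum, so the min-max value
\begin{equation*}
	c_{1}=\inf_{\gamma\in\Gamma}\max_{s\in[0,1]}I(\gamma(s)),\qquad \Gamma=\left\{\gamma\in C\left([0,1],H^{1}(\bn)\right):\gamma(0)=u_{1},\ \gamma(1)=e\right\}
\end{equation*}
satisfies $c_{1}>c_{0}$, whence any critical point at level $c_{1}$ is distinct from $u_{1}$. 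Because $f\not\equiv0$, the weak limit of a $(PS)_{c_{1}}$ sequence is a \emph{nontrivial} solution of \eqref{4.aa} with energy at least $c_{0}$, so the Palais-Smale decomposition shows compactness holds provided $c_{1}$ stays strictly below the first breaking level $c_{0}+m_{\la}$; granting this, the mountain pass theorem delivers the second solution $u_{2}>0$ with $I(u_{2})=c_{1}>c_{0}=I(u_{1})$.

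The decisive step, which I expect to be the hardest, is the energy estimate $c_{1}<c_{0}+m_{\la}$. Testing along $t\mapsto u_{1}+t\mathcal{V}_{y}$ and using $I'(u_{1})=0$ gives the exact identity
\begin{equation*}
	I(u_{1}+t\mathcal{V}_{y})=c_{0}+\frac{t^{2}}{2}\|\mathcal{V}\|^{2}-\frac{t^{p+1}}{p+1}\int_{\bn}a\,\mathcal{V}_{y}^{p+1}\vnn-\frac{1}{p+1}\int_{\bn}a\left[(u_{1}+t\mathcal{V}_{y})^{p+1}-u_{1}^{p+1}-(p+1)u_{1}^{p}\,t\mathcal{V}_{y}-(t\mathcal{V}_{y})^{p+1}\right]\vnn,
\end{equation*}
in which the last bracket is pointwise non-negative, so evaluating at the maximizing $t^{*}>0$ produces a strictly negative correction to the leading value $c_{0}+m_{\la}$ (reinforced by the gain $\int_{\bn}(a-1)\mathcal{V}_{y}^{p+1}\vnn\ge0$ coming from $a\ge1$ with $\mu(\{a\neq1\})>0$). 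The genuine difficulty, exactly as flagged in the introduction, is to justify this quantitatively: the mixed interaction integrals $\int u_{1}^{p}\mathcal{V}_{y}$, $\int u_{1}\mathcal{V}_{y}^{p}$ and $\int u_{1}^{p-1}\mathcal{V}_{y}^{2}$ cannot be controlled by the $L^{p}$-bounds available in the Euclidean theory, since $\mathcal{V}\notin L^{p}(\bn)$ for $p\in[1,2)$; instead one must exploit the sharp exponential decay $\log\mathcal{V}^{2}/r\to-\left[(N-1)+\sqrt{(N-1)^{2}-4\la}\right]$ and the refined interaction estimates of Section~\ref{KER} to pin down the sign of the correction. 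Once $c_{1}<c_{0}+m_{\la}$ is secured, compactness is restored and $u_{1}\neq u_{2}$ are the two desired positive solutions.
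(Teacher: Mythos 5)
Your two-step skeleton (a small first solution, then a mountain pass kept strictly below the first bubbling level, with compactness read off from the Palais--Smale decomposition) is in spirit the paper's own strategy, which follows Jeanjean \cite{LJ}; but two of your steps have genuine gaps, and the step you flag as decisive is in fact the easy one in this regime.

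First, the smallness threshold. You claim that $\|f\|_{H^{-1}(\bn)}<C_{p}S_{1,\la}^{\frac{p+1}{2(p-1)}}$ is ``exactly the threshold'' producing a sphere $\|u\|_{H_{\la}}=\rho_{1}$ on which $I_{\la,a,f}\ge\alpha>0$. It is not. The only tool behind that claim is the barrier
\begin{equation*}
  I_{\la,a,f}(u)\;\ge\;\frac12\rho^{2}-\frac{\|a\|_{L^{\infty}(\bn)}}{p+1}\,S_{1,\la}^{-\frac{p+1}{2}}\rho^{p+1}-\|f\|_{H^{-1}(\bn)}\,\rho,
  \qquad \rho=\|u\|_{H_{\la}},
\end{equation*}
and maximizing the right-hand side over $\rho>0$ shows it is positive somewhere if and only if
$\|f\|_{H^{-1}(\bn)}<\frac12\left(\frac{p+1}{2}\right)^{\frac{1}{p-1}}C_{p}S_{1,\la}^{\frac{p+1}{2(p-1)}}$. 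Since $p+1<2^{p}$ for every $p>1$, the factor $\frac12\left(\frac{p+1}{2}\right)^{\frac{1}{p-1}}$ is strictly less than $1$, so your local-minimum geometry is available only on a strictly smaller range of $\|f\|$ than the theorem asserts: you would prove the theorem with a worse constant. This is precisely why the paper does not minimize over a ball. It partitions $H^{1}(\bn)$ by the sign of $g(u)=\|u\|_{H_{\la}}^{2}-p\|a\|_{L^{\infty}(\bn)}\|u\|_{L^{p+1}(\bn)}^{p+1}$ into $U_{1}\cup U\cup U_{2}$; on the separating set $U$ one has the extra constraint $\|u\|_{H_{\la}}^{2}=p\|a\|_{L^{\infty}(\bn)}\|u\|_{L^{p+1}(\bn)}^{p+1}$, which gives $\frac{p-1}{p}\|u\|_{H_{\la}}\ge C_{p}S_{1,\la}^{\frac{p+1}{2(p-1)}}$ (Lemma \ref{lem2.1}), and it is this information --- not the Sobolev inequality alone --- that yields the separation $c_{0}=\inf_{U_{1}}I_{\la,a,f}<\inf_{U}I_{\la,a,f}=c_{1}$ (Lemma \ref{lem2.2}, via \eqref{2aaa} and Lemma \ref{leml}) under exactly the stated bound. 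Compactness for the first solution is then also obtained differently: each bubble $w^{i}$ satisfies $g(w^{i})<0$ because $p>1$ and $\|a\|_{L^{\infty}}\ge1$, while the minimizing sequence lies in $U_{1}$ where $g\ge0$, and continuity of $g$ gives the contradiction (Step 3 of Proposition \ref{crit1}).

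Second, the mountain pass. You call the estimate $c_{1}<c_{0}+m_{\la}$ the hardest step and defer it to the interaction estimates of Section \ref{KER}. For $a\ge1$ this is a misdiagnosis: in the identity you wrote, the bracket is pointwise strictly positive (both $u_{1}$ and the bubble are positive) and $\int_{\bn}(1-a)\mathcal{V}^{p+1}\,\mathrm{d}V_{\bn}\le0$, so with the bubble kept at a \emph{fixed} position --- no translation $\tau_{y}$, no decay analysis, no issue with $\mathcal{V}\notin L^{p}(\bn)$ --- one gets $I_{\la,a,f}(u_{0}+t\mathcal{V})<I_{\la,a,f}(u_{0})+I_{\la,1,0}(t\mathcal{V})\le I_{\la,a,f}(u_{0})+S^{\infty}$ for every $t>0$, and compactness of the parameter interval finishes the strict inequality along the whole path; this is exactly Claims 2 and 3 in Proposition \ref{crit2}. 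The machinery of Section \ref{KER} is needed only in the opposite case $a\le1$ (Theorem \ref{mainthm1}), where $\int_{\bn}(1-a)w_{y}^{p+1}\,\mathrm{d}V_{\bn}\ge0$ must be beaten by the interaction term; moreover the constant $d_{2}$ and the critical point entering Proposition \ref{energy-prop} are constructed under $(\mathbf{A}_{1})$/$(\mathbf{A}_{3})$, so invoking that proposition under $(\mathbf{A}_{2})$ is not licensed as stated. Finally, your compactness argument at the level $c_{1}$ rests on the unproved assertion that the weak limit of the $(PS)_{c_{1}}$ sequence has energy at least $c_{0}$: nothing confines that limit to $\overline{B}_{\rho_{1}}$, and a ball-infimum is not an a priori lower bound for the energy of critical points outside the ball. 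Supplying this lower bound is exactly what the paper's classification of critical points through the sign of $g$ and the separation $c_{0}<c_{1}$ is for; in your framework it is an open hole.
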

Further, if $a$ satisfies  $\left(\mathbf{A}_{3}\right)$, i.e., \eqref{4.aa} becomes the following 
\begin{equation}
	\left.\begin{aligned}
		-\Delta_{\bn} u- \lambda u &=|u|^{p-1} u+f(x) \text { in } \mathbb{B}^{N},\\
		u &>0 \;\text { in } \mathbb{B}^{N}, \\
		u & \in H^{1}\left(\mathbb{B}^{N} \right),
	\end{aligned}\right\}
	\label{4.aaa} \tag{$\mathcal{P^{'}}$}
\end{equation}
where all the notations are the same as for the problem \eqref{4.aa} then we have the following theorem.
\begin{thm}\label{mainthm3}
	Assume that $a$ satisfies $\left(\mathbf{A}_{3}\right)$. Then there exists $\delta_{0}^{'}>0$ such that the problem $\eqref{4.aaa}$ has at least two positive solutions any non-negative $f \in H^{-1}\left(\mathbb{B}^{N}\right)$ with $\|f\|_{H^{-1}\left(\mathbb{B}^{N}\right)} \leq \delta_{0}^{'}$ and for $\lambda$ satisfying  \eqref{lambda}.
\end{thm}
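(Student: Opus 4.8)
The plan is to produce the first solution as a local minimizer of the energy functional in a small ball around the origin and the second one by a mountain-pass argument, mimicking the Euclidean scheme of Zhu \cite{XZ} but with the Euclidean ground state $W$ replaced by the hyperbolic bubble $\mathcal V$ solving \eqref{hsm} with $a\equiv1$. Write
\[
I(u)=\frac12\int_{\bn}\big(|\nabla_{\bn}u|^2-\la u^2\big)\vn-\frac1{p+1}\int_{\bn}|u_+|^{p+1}\vn-\langle f,u\rangle,\qquad u\in H^1(\bn),
\]
for the functional attached to \eqref{4.aaa}, and $I_\infty$ for its $f\equiv0$ counterpart, so that $\mathfrak m:=I_\infty(\mathcal V)>0$ is the ground-state level of \eqref{hsm}. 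Since $\la$ obeys \eqref{lambda}, the quadratic form $\|u\|_\la^2:=\int_{\bn}(|\nabla_{\bn}u|^2-\la u^2)\vn$ is equivalent to the $H^1(\bn)$-norm, so $I$ is well defined and $C^1$. First I would record the mountain-pass geometry: $I(0)=0$, and for $\|f\|_{H^{-1}(\bn)}\le\delta_0'$ small there are $\rho,\alpha>0$ with $I\ge\alpha$ on $\partial B_\rho$, while $\inf_{B_\rho}I<0$ since the term $-\langle f,u\rangle$ with $0\not\equiv f\ge0$ drags the energy below zero near $0$.

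First solution: minimizing $I$ over $\overline{B_\rho}$ and invoking Ekeland's variational principle yields a minimizing Palais--Smale sequence at the level $c_1:=\inf_{\overline{B_\rho}}I<0$; the infimum being attained strictly inside the ball, its limit is a critical point. Compactness at a negative level is free: by the Palais--Smale decomposition any non-compact sequence peels off at least one bubble of positive energy $\ge\mathfrak m$, which is incompatible with $c_1<0$. Testing the equation against the negative part and using $f\ge0$ forces $u_1\ge0$, and the strong maximum principle upgrades this to a positive solution $u_1$ with $I(u_1)=c_1<0$.

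Second solution: let $c:=\inf_{\gamma\in\Gamma}\max_{t\in[0,1]}I(\gamma(t))$ with $\Gamma$ the paths from $u_1$ to a far endpoint $u_1+T\mathcal V_y$ ($T$ large, $I$ there $<c_1$, using $p+1>2$); since $u_1\in B_\rho$ while the endpoint lies outside $B_\rho$, every such path meets the ridge $\partial B_\rho$, whence $c\ge\alpha>0>c_1$. The crux is the strict energy estimate
\[
c<c_1+\mathfrak m,
\]
which locates $c$ below the first level at which a bubble can detach, so the Palais--Smale decomposition then forces any Palais--Smale sequence at level $c$ to converge and deliver a critical point $u_2$ with $I(u_2)=c>0>c_1=I(u_1)$; hence $u_2\ne u_1$, and $u_2>0$ again by the maximum principle. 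To prove the estimate I would test with the path $s\mapsto u_1+s\,\mathcal V_y$, where $\mathcal V_y=\mathcal V\circ\tau_y$ is the bubble recentred by a hyperbolic isometry $\tau_y$ sending $0$ to a point $y$ whose geodesic distance to the mass of $u_1$ tends to infinity, and then maximize $I$ along it; the leading order gives $\le c_1+\mathfrak m$ and one must extract a strictly negative correction from the interaction between $u_1$ and $\mathcal V_y$.

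The main obstacle is precisely this interaction estimate. In contrast to the Euclidean problem, $\mathcal V\notin L^p(\bn)$ for $p\in[1,2)$ --- a consequence of the decay $\lim_{r\to\infty}\tfrac{\log\mathcal V^2}{r}=-\big((N-1)+\sqrt{(N-1)^2-4\la}\big)$ recorded after \eqref{hsm} --- so the cross terms $\int_{\bn}u_1^{\,p}\,\mathcal V_y\vn$ and $\int_{\bn}\mathcal V_y^{\,p}\,u_1\vn$ are not controlled by the naive $L^p$--$L^{p'}$ duality used over $\rn$; they have to be handled through the sharp exponential decay of both $u_1$ and $\mathcal V$ together with the exponential volume growth of geodesic balls in $\bn$, exactly the delicate computation carried out in Section~\ref{KER}. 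Once $c<c_1+\mathfrak m$ is secured, the two positive solutions $u_1,u_2$ are obtained and the theorem follows.
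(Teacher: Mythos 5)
Your proposal is correct in outline and takes essentially the paper's route: produce $\mathcal{U}_{1,f}$ as a local minimizer of $I_{\la,1,f}$ in a small ball, run a mountain pass from $\mathcal{U}_{1,f}$ to a far-away endpoint $\mathcal{U}_{1,f}+t_0\,w(\tau_y(x))$, bound the mountain-pass level $c$ strictly between $0$ and $I_{\la,1,f}(\mathcal{U}_{1,f})+I_{\la,1,0}(w)$ (the paper's \eqref{energy2} and \eqref{mtnpass}), and finish with the Palais--Smale decomposition (Proposition~\ref{prop-4.1}). Two points deserve comment. (i) Your assertion that the decomposition \emph{forces convergence} of a Palais--Smale sequence at level $c$ is not automatic: it presupposes that no critical point of $I_{\la,1,f}$ has energy below $c_1=I_{\la,1,f}(\mathcal{U}_{1,f})$, i.e.\ that the lowest breakdown level is $c_1+I_{\la,1,0}(w)$; this is true, but it is exactly the content of the characterization of critical points via $J_{\la,1,f}$ (Proposition~\ref{prop4.bb} together with Corollary~\ref{cor1.8}), which you would need to invoke or reprove. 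The paper's own proof of Theorem~\ref{mainthm3} dispenses with convergence altogether: it writes $c=I_{\la,1,f}(\mathcal{V}_{1,f})+m\,I_{\la,1,0}(w)$ with an integer $m\geq 0$ (\eqref{sol2a}) and deduces $\mathcal{V}_{1,f}\neq\mathcal{U}_{1,f}$ because equality, combined with $c>0\geq I_{\la,1,f}(\mathcal{U}_{1,f})$, would force $m\geq 1$, while $c<I_{\la,1,f}(\mathcal{U}_{1,f})+I_{\la,1,0}(w)$ forces $m<1$; note also $\mathcal{V}_{1,f}\neq 0$ since $0$ is not a critical point when $f\not\equiv 0$. (ii) You misplace the difficulty of the interaction estimate. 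For $a\equiv 1$ no decay analysis and no control of cross terms such as $\int_{\bn}u_1^{\,p}\,w(\tau_y(x))\vn$ is needed: testing the Euler equation of $u_1$ with $t\,w(\tau_y(x))$ cancels the linear cross term together with the $f$-term, leaving $I_{\la,1,f}\left(u_1+t\,w(\tau_y(x))\right)=I_{\la,1,f}(u_1)+I_{\la,1,0}(tw)-J$, where $J\geq 0$ pointwise by the elementary inequality $(s+t)^{p+1}\geq s^{p+1}+t^{p+1}+(p+1)s^{p}t$, and $J$ is strictly positive for $t$ in compact subsets of $(0,\infty)$ by its quantitative local version; this is precisely how \eqref{energy2} in Proposition~\ref{energy-prop} is obtained, since the term $(I)$ there vanishes when $a\equiv 1$. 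The sharp exponential-decay computations of Section~\ref{KER}, and the non-integrability $\mathcal{V}\notin L^{p}(\bn)$ for $p\in[1,2)$ that you highlight, are needed only for Theorem~\ref{mainthm1}, where the competing term $\int_{\bn}(1-a(x))\,w(\tau_{-y}(x))^{p+1}\vn$ must be beaten; that term is absent here.
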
 
\medskip

\noi The paper is organized as follows: In Section~\ref{Prem}, we introduce some of the notations, geometric definitions, and preliminaries concerning the hyperbolic space.
Section~\ref{Sec-energy} describes the energy functional, setting up the problem, and associated auxiliary lemmas involving functionals. In Section~\ref{PS1.1},
we state and prove the Palais-Smale decomposition theorem as Proposition~\ref{prop-4.1} and \ref{prop4.cc}. Whereas in Section~\ref{Secasymp}, we obtain asymptotic 
estimates for the solution of $\eqref{4.aaa}.$  Section~\ref{KER} is devoted to the key energy estimates involving the solutions of \eqref{hsm}.  The proof of Theorem~\ref{mainthm1} and Theorem~\ref{mainthm2} are given in Section~\ref{Secpf1}. Finally, Section~\ref{Secpf3} is devoted to the proof of Theorem~\ref{mainthm3}.

\section{Preliminaries}\label{Prem}

\noi In this section, we will introduce some of the notations and definitions used in this
paper and also recall some of the embeddings
related to the Sobolev space on the hyperbolic space.  

\medskip

\noi We will denote by $\bn$ the disc model of the hyperbolic space, i.e., the unit disc
equipped with 
the Riemannian metric $g_{\bn} := \sum\limits_{i=1}^N \left(\frac{2}{1-|x|^2}\right)^2 \, {\rm d}x_i^2$. To simplify our notations, we will denote $g_{\bn}$
by $g$.
The corresponding volume element is given by $\mathrm{~d} V_{\mathbb{B}^{N}} = \big(\frac{2}{1-|x|^2}\big)^N {\rm d}x, $ where ${\rm d}x$ denotes the Lebesgue 
measure on $\rn$.  

\medskip 

\noi {\bf Hyperbolic distance on $\bn$.} The hyperbolic distance between two points $x$ and $y$ in $\bn$ will be denoted by $d(x, y).$ For the hyperbolic distance between
$x$ and the origin we write 

$$
\rho := \, d(x, 0) = \int_{0}^{r} \frac{2}{1 - s^2} \, {\rm d}s \, = \, \log \frac{1 + r}{1 - r},
$$
where $r = |x|$, which in turn implies that  $r = \tanh \frac{\rho}{2}.$ Moreover, the hyperbolic distance between $x, y \in \bn$ is given by 

$$
d(x, y) = \cosh^{-1} \left( 1 + \dfrac{2|x - y|^2}{(1 - |x|^2)(1 - |y|^2)} \right).
$$
It easily follows that a subset $S$ of $\bn$ is a hyperbolic sphere in $\bn$ if and only if  $S$ is a Euclidean sphere in $\mathbb{R}^N$ and contained in $\bn$, probably 
with a different centre and different radius, which can be computed. Geodesic balls in $\bn$ of radius $a$ centred at the origin will be denoted by

$$
B(0, a) : = \{ x \in \bn : d(x, 0) < a \}.
$$

\noi We also need some information on the isometries of $\bn$. Below we recall the
definition of a particular type of isometry, namely the hyperbolic translation. For
more details on the isometry group of $\bn$, we refer \cite{RAT}.

\medskip 

\noi {\bf Hyperbolic Translation.} For $b \in \bn,$ define 

\begin{equation}\label{hypt}
	\tau_b(x) = \dfrac{(1 - |b|^2)x + (|x|^2 + 2x.b + 1)b}{|b|^2 |x|^2 + 2x.b + 1},
\end{equation}
then $\tau_b$ is an isometry of $\bn$ with $\tau_b(0) = b.$ The map 
$\tau_b$ is called the hyperbolic translation of $\bn$ by $b.$ It can also be seen that 
$\tau_{-b} = \tau_b^{-1}.$

\medskip

\noi The hyperbolic gradient $\nabla_{\bn}$ and the hyperbolic Laplacian $\De_{\bn}$ are
given by
\begin{align*}
	\nabla_{\bn}=\left(\frac{1-|x|^2}{2}\right)^2\nabla,\ \ \ 
	\De_{\bn}=\left(\frac{1-|x|^2}{2}\right)^2\De + (N-2)\frac{1 - |x|^2}{2} \, \langle x, \nabla \,\rangle.
\end{align*}

\medskip

{\bf Laplace-Beltrami operator on $\bn$.} It is well known that the $N$-dimensional hyperbolic space $\bn$ admits a polar coordinate decomposition structure. Namely, for $x\in\bn$ we can write $x=(r, \Theta)=(r, \theta_{1},\ldots, \theta_{N-1})\in(0,\infty)\times\mathbb{S}^{N-1}$, where $r$ denotes the geodesic distance between the point $x$ and a fixed pole $0$ in $\bn$ and $\mathbb{S}^{N-1}$ is the unit sphere in the $N$-dimensional euclidean space $\mathbb{R}^N$. Recall that the Riemannian Laplacian of a scalar function $u$ on $\bn$ is given by 
\begin{equation}\label{Laplacian_bn}
	\Delta_{\bn} u (r, \Theta)  =
	\frac{1}{(\sinh r)^{N-1}} \frac{\partial}{\partial r} \left[ (\sinh r)^{N-1} \frac{\partial u}{\partial r}(r, \Theta) \right] \\
	+ \frac{1}{\sinh^2 r} \Delta_{\mathbb{S}^{N-1}} u(r, \Theta),
\end{equation}
where $\Delta_{\mathbb{S}^{N-1}}$ is the Riemannian Laplacian on the unit sphere $\mathbb{S}^{N-1}$.

\medskip

\noi{\bf A sharp Poincar\'{e}-Sobolev inequality.} (see \cite{MS})

\medskip

We will denote by ${H^{1}}(\bn)$ the Sobolev space on the disc
model of the hyperbolic space $\bn$, equipped with norm $\|u\|=\left(\int_{\mathbb{B}^N} |\nabla_{\mathbb{B}^{N}} u|^{2}\right)^{\frac{1}{2}},$
where  $|\nabla_{\bn} u| $ is given by
$|\nabla_{\bn} u| := \langle \nabla_{\bn} u, \nabla_{\bn} u \rangle^{\frac{1}{2}}_{\bn} .$ \\

For $N \geq 3$ and every $p \in \left(1, \frac{N+2}{N-2} \right]$ there exists an optimal constant 
$S_{N,p} > 0$ such that
\begin{equation*}
	S_{N,p} \left( \int_{\mathbb{B}^{N}} |u|^{p + 1} \mathrm{~d} V_{\mathbb{B}^{N}} \right)^{\frac{2}{p + 1}} 
	\leq \int_{\mathbb{B}^N} \left[|\nabla_{\mathbb{B}^{N}} u|^{2}
	- \frac{(N-1)^2}{4} u^{2}\right] \, \mathrm{~d} V_{\mathbb{B}^{N}},
\end{equation*}
for every $u \in C^{\infty}_{0}(\mathbb{B}^{N}).$ If $ N = 2$, then any $p > 1$ is allowed.

\noi A basic information is that the bottom of the spectrum of $- \Delta_{\bn}$ on $\bn$ is 
\begin{equation}\label{firsteigen}
	\frac{(N-1)^2}{4} = \inf_{u \in H^{1}(\bn)\setminus \{ 0 \}} 
	\dfrac{\int_{\bn}|\nabla_{\bn} u|^2 \, \mathrm{~d} V_{\mathbb{B}^{N}} }{\int_{\bn} |u|^2 \, \mathrm{~d} V_{\mathbb{B}^{N}}}. 
\end{equation}

\begin{rem}
	A  consequence of \eqref{firsteigen} is that if $\lambda < \frac{(N-1)^2}{4},$ then

	$$
	||u||_{H_{\lambda}} := ||u||_{\lambda} := \left[ \int_{\bn} \left( |\nabla_{\bn} u|^2 - \lambda \, u^2 \right) \, \mathrm{~d} V_{\mathbb{B}^{N}} \right]^{\frac{1}{2}}, \quad u \in C_c^{\infty}(\bn)
	$$
	is a norm, equivalent to the $H^1(\bn)$ norm and the corresponding inner product is given by $\langle u, v\rangle_{H_{\lambda}}.$
	
\end{rem}

\medskip

\section{Energy functional  and preliminary lemmas}\label{Sec-energy}

\subsection{Unperturbed  equation} Firstly, let us recall the asymptotic estimates of positive solutions to the following homogeneous problem
\begin{equation}
	\begin{gathered}
		-\Delta_{\mathbb{B}^{N}} w-\lambda w=|w|^{p-1} w, \; 	w>0\; \text { in } \mathbb{B}^{N}, 
		w \in H^{1}\left(\mathbb{B}^{N}\right). \label{3d}
	\end{gathered}
\end{equation}
Then by elliptic regularity, any solution, $w\in H^1(\bn),$ is also in $C^\infty$ and satisfies the decay property (See \cite[Lemma~3.4]{MS}): for every $\varepsilon > 0,$ there exist positive constants $C_1^{\varepsilon}$ and $C_2^{\varepsilon}$ such that there holds 
\begin{equation}\label{esti-limit}
	C_1^{\varepsilon} \e^{-(c(N,\lambda) + \varepsilon) \, d(x,0)} \leq w(x) \leq C_2^{\varepsilon} \e^{-(c(N,\lambda) - \varepsilon) \, d(x,0)}, \quad \hbox{for all} \ x \in \bn,
\end{equation}
where $c(N, \lambda) \, =\, \frac{1}{2} (N-1+\sqrt{(N-1)^{2}-4 \lambda}).$ 

\medskip

\subsection{Energy functional } For given $a(x)$ and $f(x)$, we define $I_{\la,a, f}(u): H^{1}\left(\mathbb{B}^{N}\right) \rightarrow \mathbb{R}$ by
\begin{equation}
	I_{\la,a, f}(u)=\frac{1}{2}\|u\|_{H_{\la}}^{2}-\frac{1}{p+1} \int_{\bn} a(x) u_{+}^{p+1} \mathrm{~d} V_{\mathbb{B}^{N}}(x) -\int_{\mathbb{B}^{N}} f(x) u(x) \mathrm{~d} V_{\mathbb{B}^{N}}(x)
	\label{energy}
\end{equation}

It is obvious that if $u$ is a critical point of $I_{\la, a, f}$, then $u$ is the solution to the following problem
\begin{equation} 
	\begin{aligned}
		-\Delta_{\bn} u- \lambda u &=a(x)u_{+}^{p}+f(x) \text { in } \mathbb{B}^{N}, \\
		u & \in H^{1}\left(\mathbb{B}^{N}\right). 
	\end{aligned}
	\label{4.pp}
\end{equation}

\begin{rem}
	If we take $v=u_{-}$ as a test function in \eqref{4.pp} where $u$ is a weak solution of \eqref{4.pp} and $f$ is a non-negative functional, we obtain $u_{-}=0$, i.e., $u \geq 0$. Thus $u>0$ follows from the maximum principle, and hence $u$ is a solution to \eqref{4.aa}.
	\label{rmk1.1}
\end{rem}

\medskip

Define
\begin{equation}
	J_{\la, a, f}(v)=\max _{t>0} I_{\la,a, f}(t v): \tilde\Sigma_{+} \rightarrow \mathbb{R}, 
	\label{4.bb}
\end{equation}
where
\begin{equation*}
	\begin{aligned}
		&\Sigma :=\left\{v \in H^{1}\left(\mathbb{B}^{N}\right) ;\;\|v\|_{H_{\la}}=1\right\}, \\
		&\tilde{\Sigma}_{+}:=\left\{v \in \Sigma:\; v_{+} \not \equiv 0\right\}.
	\end{aligned}
\end{equation*}

\medskip

In the subsequent sections, we will establish that the positive solutions of \eqref{4.aa} correspond to the critical points of $I_{\la, a, f}(u): H^{1}\left(\mathbb{B}^{N}\right) \rightarrow \mathbb{R}$ or $J_{\la, a, f}(v)$ : $\tilde\Sigma_{+} \rightarrow \mathbb{R}$. To this end we set

\begin{equation*}
	\begin{aligned}
		&\underline{\mathrm{a}} :=\inf _{x \in \bn} a(x)>0, \\
		&\bar{a} :=\sup _{x \in \bn} a(x) =1 .
	\end{aligned}
\end{equation*}
Using the definition of $J_{\la,a, f},$ and carrying out some easy calculations we obtain 
\begin{align}
	J_{\la,a, 0}(v)& =I_{\la,a, 0}\left(\left(\int_{\mathbb{B}^{N}} a(x) v_{+}^{p+1} \mathrm{~d} V_{\mathbb{B}^{N}}(x)\right)^{-\frac{1}{p-1}} v\right) \notag \\
	&=\left(\frac{1}{2}-\frac{1}{p+1}\right)\left(\int_{\mathbb{B}^{N}} a(x) v_{+}^{p+1} \mathrm{~d} V_{\mathbb{B}^{N}}(x)\right)^{-\frac{2}{p-1}}. \label{4.cc}
\end{align}
Therefore
\begin{equation*}
	\bar{a}^{-\frac{2}{p-1}} J_{\la, 1,0}(v) = J_{\la,\bar{a}, 0}(v) \leq J_{\la,a, 0}(v) \leq J_{\la,\underline{\mathrm{a}}, 0}(v)=\underline{\mathrm{a}}^{-\frac{2}{p-1}} J_{\la,1,0}(v).
\end{equation*}
Further, since $w$ is the unique radial solution of \eqref{3d}, we have
\begin{equation}
	\max _{t\in [0,1]} I_{\la,1,0}(t w)=I_{\la,1,0}(w).
\end{equation}
Moreover,
\begin{equation}
	\bar{a}^{-\frac{2}{p-1}} I_{\la,1,0}(w) \leq \inf _{v \in \tilde{\Sigma}_{+}} J_{\la,a, 0}(v) \leq \underline{\mathrm{a}}^{-\frac{2}{p-1}} I_{\la,1,0}(w) . \quad \label{4.dd}
\end{equation}
We define the functionals $J, J_\infty: H^1(\bn)\rightarrow \mathbb R$ as 
\begin{equation}
	J(u):=\frac{\|u\|_{\lambda}^{2}}{\left(\int_{\mathbb{B}^{N}} a(x)|u(x)|^{p+1} \mathrm{~d} V_{\mathbb{B}^{N}}(x)\right)^{\frac{2}{p+1}}}, \quad J_{\infty}(u):=\frac{\|u\|_{\lambda}^{2}}{\left(\int_{\mathbb{B}^{N}}|u(x)|^{p+1} \mathrm{~d} V_{\mathbb{B}^{N}}(x)\right)^{\frac{2}{p+1}}} \\ \label{3b}
\end{equation}
and the energy levels
\begin{equation}
	S_{1, \lambda}:=\inf _{u \in H^{1}\left(\mathbb{B}^{N}\right) \backslash\{0\}} J_{\infty}(u), \quad S_{m, \lambda}:=m^{\frac{p-1}{p+1}} S_{1, \lambda}, \quad m=2,3,4, \cdots \label{3c}
\end{equation}

\subsection{Auxliary Lemmas} We require the following auxiliary lemmas to prove Theorem \ref{thm4aa}.

\medskip

The subsequent lemmas give us the inequalities involving $I_{\la,a, f}\;(J_{\la,a, f})$ and  $I_{\la,a(\varepsilon),0}\;(J_{\la,a(\varepsilon),0})$ for $\varepsilon \in (0,1).$

\medskip

\begin{lem}\label{lem4.bb}
	\begin{enumerate}[label=(\roman*)]\label{lem4.aa}
		\item The following inequality holds for $u \in H^{1}\left(\mathbb{B}^{N}\right)$ and $\varepsilon \in(0,1)$
		\begin{equation}
			(1-\varepsilon) I_{\la,\frac{a}{1-\varepsilon}, 0}(u)-\frac{1}{2 \varepsilon}\|f\|_{H^{-1}\left(\mathbb{B}^{N}\right)}^{2} \leq I_{\la,a, f}(u) \leq(1+\varepsilon) I_{\la,\frac{a}{1+\varepsilon}, 0}(u)+\frac{1}{2 \varepsilon}\left\|f\right\|_{H^{-1}\left(\mathbb{B}^{N}\right)}^{2} .
		\end{equation}
		\item Suppose $v \in \tilde{\Sigma}_{+}$and $\varepsilon \in(0,1)$. Then there holds
		\begin{equation}
			(1-\varepsilon)^{\frac{p+1}{p-1}} J_{\la,a, 0}(v)-\frac{1}{2 \varepsilon}\|f\|_{ H^{-1}\left(\mathbb{B}^{N}\right)}^{2} \leq J_{\la,a, f}(v) \leq(1+\varepsilon)^{\frac{p+1}{p-1}} J_{\la,a, 0}(v)+\frac{1}{2 \varepsilon}\left\|f\right\|_{H^{-1}\left(\mathbb{B}^{N}\right)}^{2}. 
			\label{4.h}
		\end{equation}
		\item  In particular,  there exists $d_{0}>0$ such that if $\|f\|_{H^{-1}\left(\mathbb{B}^{N}\right)} \leq d_{0}$, then,
		\begin{equation*}
			\inf _{v \in \tilde{\Sigma}_{+}} J_{\la,a, f}(v)>0 .
		\end{equation*}
	\end{enumerate}
\end{lem}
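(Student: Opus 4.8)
The plan is to derive all three parts from a single elementary estimate on the linear term together with the scaling identity \eqref{4.cc}; there is no deep analytic difficulty here, and the lemma is best viewed as a technical bookkeeping step.

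For part (i), I would start from the definition \eqref{energy} and note that $I_{\la,a,f}$ differs from the rescaled unperturbed functionals only through the linear term $\int_{\bn} f u\,\vnn$ and the prefactor on the potential term. Treating $H^{-1}(\bn)$ as the dual of $(H^1(\bn),\|\cdot\|_{H_\la})$ gives $\left|\int_{\bn} f u\,\vnn\right|\le \|f\|_{H^{-1}(\bn)}\|u\|_{H_\la}$, and Young's inequality yields, for every $\varepsilon\in(0,1)$,
\[
\|f\|_{H^{-1}(\bn)}\|u\|_{H_\la}\le \frac{\varepsilon}{2}\|u\|_{H_\la}^2+\frac{1}{2\varepsilon}\|f\|_{H^{-1}(\bn)}^2 .
\]
Inserting this to bound $-\int_{\bn} f u\,\vnn$ from above and below turns the quadratic term of \eqref{energy} into $\tfrac{1\pm\varepsilon}{2}\|u\|_{H_\la}^2$; factoring out $(1\pm\varepsilon)$ and absorbing it into the potential term gives precisely $(1\pm\varepsilon)\,I_{\la,\frac{a}{1\pm\varepsilon},0}(u)$, which is the two-sided bound of (i).

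For part (ii), I would apply (i) with $u$ replaced by $tv$ and take $\max_{t>0}$. Because the terms $\pm\frac{1}{2\varepsilon}\|f\|_{H^{-1}(\bn)}^2$ are constant in $t$ and the prefactors $(1\pm\varepsilon)$ are positive constants, the maximum passes through both sides, producing $J_{\la,a,f}(v)$ on the left and $(1\pm\varepsilon)\,J_{\la,\frac{a}{1\pm\varepsilon},0}(v)\pm\frac{1}{2\varepsilon}\|f\|_{H^{-1}(\bn)}^2$ on the right. It then remains to convert $J_{\la,\frac{a}{1\pm\varepsilon},0}$ into $J_{\la,a,0}$: the explicit formula \eqref{4.cc} shows $J_{\la,ca,0}(v)=c^{-\frac{2}{p-1}}J_{\la,a,0}(v)$ for any constant $c>0$, so with $c=(1\pm\varepsilon)^{-1}$ we pick up a factor $(1\pm\varepsilon)^{\frac{2}{p-1}}$. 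Multiplying by the outer prefactor $(1\pm\varepsilon)$ and using $1+\tfrac{2}{p-1}=\tfrac{p+1}{p-1}$ yields exactly \eqref{4.h}.

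For part (iii), I would use the lower bound in (ii) and take the infimum over $\tilde\Sigma_+$. Setting $m_0:=\inf_{v\in\tilde\Sigma_+}J_{\la,a,0}(v)$, the chain \eqref{4.dd} gives $m_0\ge \bar a^{-\frac{2}{p-1}}I_{\la,1,0}(w)>0$, the strict positivity following since $w$ solves \eqref{3d}: testing with $w$ gives $\|w\|_{H_\la}^2=\int_{\bn}w^{p+1}\,\vnn$, hence $I_{\la,1,0}(w)=\tfrac{p-1}{2(p+1)}\|w\|_{H_\la}^2>0$. Fixing $\varepsilon=\tfrac12$, the lower bound of (ii) gives $\inf_{v}J_{\la,a,f}(v)\ge (1-\varepsilon)^{\frac{p+1}{p-1}}m_0-\frac{1}{2\varepsilon}\|f\|_{H^{-1}(\bn)}^2$, and choosing $d_0>0$ so small that $\frac{1}{2\varepsilon}d_0^2<(1-\varepsilon)^{\frac{p+1}{p-1}}m_0$ forces the infimum to be strictly positive whenever $\|f\|_{H^{-1}(\bn)}\le d_0$. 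The only points needing care are the correct interaction of $\max$/$\inf$ with the two-sided inequalities (clean here because the $\|f\|^2$ terms are constants and the prefactors are positive) and the verification $m_0>0$; the substantive input is the scaling identity \eqref{4.cc}, already in hand.
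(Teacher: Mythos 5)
Your proposal is correct, and it is essentially the argument the paper has in mind: the paper omits the proof of this lemma entirely, deferring to Adachi--Tanaka, and your three steps (Young's inequality $\|f\|_{H^{-1}}\|u\|_{H_{\la}}\le \tfrac{\varepsilon}{2}\|u\|_{H_{\la}}^{2}+\tfrac{1}{2\varepsilon}\|f\|_{H^{-1}}^{2}$ absorbed into the quadratic term, passing the supremum in $t$ through the two-sided bound, and the scaling identity $J_{\la,ca,0}=c^{-\frac{2}{p-1}}J_{\la,a,0}$ from \eqref{4.cc} together with \eqref{4.dd} and $I_{\la,1,0}(w)=\tfrac{p-1}{2(p+1)}\|w\|_{H_{\la}}^{2}>0$) are exactly the intended standard computation. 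The details you supply, including the exponent bookkeeping $1+\tfrac{2}{p-1}=\tfrac{p+1}{p-1}$ and the choice of $d_{0}$, check out.
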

In the next lemma, for 
$v \in \tilde{\Sigma}_{+}$, we analyse the function $\tilde{g}(t):[0, \infty) \rightarrow \mathbb{R}$ defined by 
$$\tilde{g}(t) := I_{\la,a, f}(t v).$$

\begin{lem} \label{lem4.cc}
	\begin{enumerate}[label=(\roman*)]
		\item The function $\tilde{g}$ has at most two critical points in $[0, \infty)$ for every $v \in \tilde{\Sigma}_{+}$.
		
		\item If $\|f\|_{H^{-1}\left(\mathbb{B}^{N}\right)} \leq d_{0}\left(d_{0}\right.$ as chosen in Lemma \ref{lem4.bb}$\left. \right)$, then for any $v \in \tilde{\Sigma}_{+}$, there exists a unique $t_{a, f}(v)>0$ such that
		$I_{\la,a, f}\left(t_{a, f}(v) v\right)=J_{\la,a, f}(v)$,
		where $J_{\la,a, f}$ is defined as in \eqref{4.cc}. Moreover, $t_{a, f}(v)>0$ satisfies
		\begin{equation}
			t_{a, f}(v)>\left(p \int_{\mathbb{B}^{N}} a(x) v_{+}^{p+1} \mathrm{~d} V_{\mathbb{B}^{N}}(x)\right)^{-\frac{1}{p-1}} \geq\left(p S_{1,\la}^{-\frac{(p+1)}{2}}\right)^{-\frac{1}{p-1}}. \label{4.jj}
		\end{equation}
	\end{enumerate}
	Additionally, we also have 
	\begin{equation}\label{4.ff}
		I_{\la,a, f}^{\prime \prime}\left(t_{a, f}(v) v\right)(v, v)<0.
	\end{equation}
	
	\medskip 
	
	(iii) Any critical point of $\tilde{g}$ distinct from $t_{a, f}(v)$ lies in $\left[0,\left(1-\frac{1}{p}\right)^{-1}\|f\|_{H^{-1}\left(\mathbb{B}^{N}\right)}\right]$.\\
\end{lem}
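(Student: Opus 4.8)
The plan is to reduce the whole statement to the elementary analysis of the scalar function $\tilde g(t)=I_{\la,a,f}(tv)$. Using $\|v\|_{\la}=1$ and $(tv)_+=t\,v_+$ for $t>0$, it has the explicit form
$$\tilde g(t)=\frac{t^2}{2}-\frac{A}{p+1}\,t^{p+1}-B\,t,\qquad A:=\int_{\bn}a(x)v_+^{p+1}\,\vnn>0,\quad B:=\int_{\bn}f\,v\,\vnn,$$
so that $\tilde g'(t)=t-A\,t^{p}-B$ and $\tilde g''(t)=1-pA\,t^{p-1}$. Since $p>1$ and $A>0$, $\tilde g''$ vanishes exactly once, at $t^{*}:=(pA)^{-1/(p-1)}$, being positive on $[0,t^{*})$ and negative on $(t^{*},\infty)$. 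Hence $\tilde g'$ is strictly increasing on $[0,t^{*}]$ and strictly decreasing on $[t^{*},\infty)$; being unimodal it has at most two zeros on $[0,\infty)$, which is exactly (i).

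For (ii) I would first invoke Lemma \ref{lem4.bb}(iii): when $\|f\|_{H^{-1}(\bn)}\le d_0$ we have $J_{\la,a,f}(v)=\max_{t>0}\tilde g(t)>0=\tilde g(0)$, while $\tilde g(t)\to-\infty$ as $t\to\infty$, so the maximum is attained at an interior critical point $t_{a,f}(v)$. Because $\tilde g$ is strictly convex on $[0,t^{*}]$, it has no interior local maximum there, so $t_{a,f}(v)\ge t^{*}$. The key point is to upgrade this to a \emph{strict} inequality: if instead $\tilde g'(t^{*})\le 0$, then monotonicity of $\tilde g'$ forces $\tilde g'\le 0$ on all of $[0,\infty)$, whence $\tilde g\le \tilde g(0)=0$ and $\max_{t>0}\tilde g\le 0$, contradicting $J_{\la,a,f}(v)>0$. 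Thus $\tilde g'(t^{*})>0$, the maximizer lies strictly in the concave region $(t^{*},\infty)$, and strict concavity there makes it the unique critical point in that region, giving uniqueness of $t_{a,f}(v)$. Since $\tilde g''(t_{a,f}(v))=I''_{\la,a,f}(t_{a,f}(v)v)(v,v)$, strict concavity also yields \eqref{4.ff}, and $t_{a,f}(v)>t^{*}=(pA)^{-1/(p-1)}$ is the first inequality in \eqref{4.jj}.

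To finish \eqref{4.jj} I would bound $t^{*}$ from below. Under $(\mathbf A_1)$ one has $a\le 1$, so $A\le\int_{\bn}|v|^{p+1}\,\vnn$; combined with $S_{1,\la}\le J_{\infty}(v)=\big(\int_{\bn}|v|^{p+1}\,\vnn\big)^{-2/(p+1)}$ (recall $\|v\|_{\la}=1$) this gives $A\le S_{1,\la}^{-(p+1)/2}$ and hence $t^{*}\ge\big(pS_{1,\la}^{-(p+1)/2}\big)^{-1/(p-1)}$. For (iii), the only other possible critical point is the unique zero $t_1$ of $\tilde g'$ in the convex region $[0,t^{*})$, which occurs only when $B>0$. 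There $\tilde g'(t_1)=0$ reads $B=t_1\bigl(1-A t_1^{p-1}\bigr)$, and $t_1<t^{*}$ gives $At_1^{p-1}<1/p$, so $B>t_1(1-\tfrac1p)$, i.e. $t_1<(1-\tfrac1p)^{-1}B\le(1-\tfrac1p)^{-1}\|f\|_{H^{-1}(\bn)}$ using $B=f(v)\le\|f\|_{H^{-1}(\bn)}\|v\|_{\la}=\|f\|_{H^{-1}(\bn)}$; when $B\le 0$ the other critical point is absent or equals $0$ and the claim is trivial.

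The main obstacle I anticipate is the careful bookkeeping inside (ii): ruling out the degenerate configurations in which the maximum sits exactly at the inflection point $t^{*}$ or in which $\tilde g$ is monotone on $[0,\infty)$. This is precisely where the smallness of $\|f\|_{H^{-1}(\bn)}$ enters, through the positivity $\inf_{\tilde\Sigma_+}J_{\la,a,f}>0$ of Lemma \ref{lem4.bb}(iii), and one must simultaneously keep track of the sign of $B=f(v)$, which need not be positive because $v\in\tilde\Sigma_+$ may change sign even though $v_+\not\equiv 0$.
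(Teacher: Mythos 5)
Your proof is correct, and it is essentially the argument the paper intends: the paper omits the proof of this lemma entirely, deferring to \cite{Adachi}, and the cited argument is precisely this reduction to the scalar function $\tilde g(t)=\tfrac{t^{2}}{2}-\tfrac{A}{p+1}t^{p+1}-Bt$ with $\tilde g'$ strictly increasing on $[0,t^{*}]$ and strictly decreasing on $[t^{*},\infty)$. You also handle the genuinely delicate points correctly, namely the strict inequality $t_{a,f}(v)>t^{*}$ forced by $\inf_{\tilde\Sigma_{+}}J_{\la,a,f}>0$ from Lemma \ref{lem4.bb}, the identity $\int_{\bn}a\,v_{+}^{p-1}v^{2}\,\mathrm{d}V_{\mathbb{B}^{N}}=\int_{\bn}a\,v_{+}^{p+1}\,\mathrm{d}V_{\mathbb{B}^{N}}$ that makes $\tilde g''(t)=I_{\la,a,f}''(tv)(v,v)$ and hence \eqref{4.ff}, the use of $a\le 1$ together with \eqref{3c} for the second inequality in \eqref{4.jj}, and the three sign cases of $B=f(v)$ in part (iii).
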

We omit the details of the proof of the above two lemmas. They can be proved exactly in the spirit of \cite{Adachi}.
The following proposition characterises all the critical points of the functional $I_{\la, a, f}$ in terms of the functional $J_{\la, a, f}$.
\begin{prop}\label{prop4.bb}
	
	Assume $\|f\|_{H^{-1}\left(\mathbb{B}^{N}\right)} \leq d_{2}$ where $d_{2}=\min \left\{d_{1},\left(1-\frac{1}{p}\right) r_{1}\right\}>0$ and $d_{1}, r_{1}$ as chosen in Proposition \ref{prop4.aa}. Then the following holds
	\begin{enumerate}[label=(\roman*)]
		\item $J_{\la,a, f} \in C^{1}\left(\tilde{\Sigma}_{+}, \mathbb{R}\right)$ and
		\begin{equation}
			J_{\la,a, f}^{\prime}(v) h=t_{a, f}(v) I_{\la,a, f}^{\prime}\left(t_{a, f}(v) v\right) h,
			\label{4.gg}
		\end{equation}
		for all $h \in T_{v} \tilde{\Sigma}_{+}=\left\{h \in H^{1}\left(\mathbb{B}^{N}\right) \mid\langle h, v\rangle_{H_{\la}}=0\right\}$.
		\item $v \in \tilde{\Sigma}_{+}$is a critical point of $J_{\la,a, f}(v)$ iff $t_{a, f}(v) v \in H^{1}\left(\mathbb{B}^{N}\right)$ is a critical point of $I_{\la,a, f}(u)$.
		\item In addition, the set containing  all the critical points of $I_{\la,a, f}(u)$ can be written as
		\begin{equation}
			\left\{t_{a, f}(v) v \mid v \in \tilde{\Sigma}_{+}, J_{\la,a, f}^{\prime}(v)=0\right\} \cup\left\{\mathcal{U}_{a, f} (x)\right\},
			\label{4.hh}
		\end{equation}
		where $\mathcal{U}_{a, f}$ is a critical point of $I_{\la, a, f}$ obtained in Proposition~\ref{prop4.aa}.
	\end{enumerate}
\end{prop}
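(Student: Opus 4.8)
The plan is to read $J_{\la,a,f}$ as the restriction of $I_{\la,a,f}$ to the optimal rays through $\tilde\Sigma_+$, in the Nehari/fibering spirit of Adachi--Tanaka. For part (i), the first task is to upgrade the pointwise maximizer $t_{a,f}(v)$ of Lemma~\ref{lem4.cc} into a $C^1$ map on $\tilde\Sigma_+$. Setting $F(t,v):=\frac{d}{dt}I_{\la,a,f}(tv)=I_{\la,a,f}'(tv)v$, one has $F(t_{a,f}(v),v)=0$, while \eqref{4.ff} gives $\partial_t F(t_{a,f}(v),v)=I_{\la,a,f}''(t_{a,f}(v)v)(v,v)<0$. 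Since $I_{\la,a,f}$ is $C^2$ throughout the subcritical range $1<p<2^*-1$, the implicit function theorem then yields $t_{a,f}\in C^1(\tilde\Sigma_+,\mathbb R)$, and hence $v\mapsto J_{\la,a,f}(v)=I_{\la,a,f}(t_{a,f}(v)v)$ is $C^1$. Differentiating by the chain rule along $h\in T_v\tilde\Sigma_+$ produces $J_{\la,a,f}'(v)h=I_{\la,a,f}'(t_{a,f}(v)v)\big[(t_{a,f}'(v)h)\,v+t_{a,f}(v)h\big]$, and the first summand drops out because $t_{a,f}(v)$ is critical for $t\mapsto I_{\la,a,f}(tv)$, i.e. $I_{\la,a,f}'(t_{a,f}(v)v)v=0$; this collapses the identity to \eqref{4.gg}.

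Part (ii) follows from \eqref{4.gg} and the orthogonal splitting $H^1(\bn)=\mathrm{span}\{v\}\oplus T_v\tilde\Sigma_+$ induced by $\langle\cdot,\cdot\rangle_{H_\la}$. If $J_{\la,a,f}'(v)=0$, then since $t_{a,f}(v)>0$ we get $I_{\la,a,f}'(t_{a,f}(v)v)h=0$ for every $h\perp v$; together with $I_{\la,a,f}'(t_{a,f}(v)v)v=0$ the differential vanishes on all of $H^1(\bn)$, so $t_{a,f}(v)v$ is a critical point of $I_{\la,a,f}$. Conversely, if $I_{\la,a,f}'(t_{a,f}(v)v)=0$, the right-hand side of \eqref{4.gg} is zero for all $h\in T_v\tilde\Sigma_+$, so $v$ is critical for $J_{\la,a,f}$.

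For part (iii), the inclusion of the set in \eqref{4.hh} into the critical set is immediate from (ii) and from $\mathcal{U}_{a,f}$ being critical by construction. For the reverse inclusion, take any critical point $u\not\equiv0$; by Remark~\ref{rmk1.1} it is positive, so with $t:=\|u\|_{H_\la}>0$ and $v:=u/\|u\|_{H_\la}\in\tilde\Sigma_+$ the number $t$ is a critical point of $\tilde g(s)=I_{\la,a,f}(sv)$. By Lemma~\ref{lem4.cc}(i), $\tilde g$ has at most two critical points. If $t=t_{a,f}(v)$, then $u=t_{a,f}(v)v$ and (ii) gives $J_{\la,a,f}'(v)=0$, so $u$ lies in the first set of \eqref{4.hh}. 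Otherwise $t$ is the second critical point, which by Lemma~\ref{lem4.cc}(iii) obeys $t\le(1-\tfrac1p)^{-1}\|f\|_{H^{-1}(\bn)}$; because $\|f\|_{H^{-1}(\bn)}\le d_2\le(1-\tfrac1p)r_1$, this forces $\|u\|_{H_\la}=t\le r_1$, i.e. $u\in\overline{B(0,r_1)}$. Since also $\|f\|_{H^{-1}(\bn)}\le d_1$, Proposition~\ref{prop4.aa} identifies $\mathcal{U}_{a,f}$ as the \emph{unique} critical point of $I_{\la,a,f}$ in that ball, whence $u=\mathcal{U}_{a,f}$.

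I expect the $C^1$-regularity in (i) to be the main technical obstacle: to apply the implicit function theorem one must confirm that $u\mapsto\int_{\bn}a\,u_+^{p+1}\,\mathrm dV_{\bn}$ is genuinely $C^2$, which is delicate for $1<p<2$ since $u_+^{p-1}$ is only H\"older continuous; one verifies the continuity of the second differential through the subcritical Sobolev embedding $H^1(\bn)\hookrightarrow L^{p+1}(\bn)$. The second delicate point is the bookkeeping in (iii): the identification of the low critical rays with $\mathcal{U}_{a,f}$ relies entirely on the choice $d_2=\min\{d_1,(1-\tfrac1p)r_1\}$, which is precisely what pushes the second critical point into the ball $\overline{B(0,r_1)}$ where Proposition~\ref{prop4.aa} supplies uniqueness.
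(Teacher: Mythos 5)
Your proof is correct, and it is essentially the proof the paper intends: the paper skips this argument entirely and cites Adachi--Tanaka, whose fibering scheme --- implicit function theorem applied to $\partial_t I_{\la,a,f}(tv)=0$ using the nondegeneracy \eqref{4.ff}, the chain rule collapsing to \eqref{4.gg} because $I_{\la,a,f}'(t_{a,f}(v)v)v=0$, and Lemma~\ref{lem4.cc}(iii) forcing any second critical ray into the ball where Proposition~\ref{prop4.aa} gives uniqueness --- is exactly what you reproduce, including the observation that $C^2$-regularity of the nonlinear term for $1<p<2$ is the only analytic subtlety. The single loose end is that Lemma~\ref{lem4.cc}(iii) together with $\|f\|_{H^{-1}(\bn)}\le d_2$ only yields $t\le r_1$, i.e.\ membership in the \emph{closed} ball, whereas Proposition~\ref{prop4.aa} asserts uniqueness in the open ball $B(r_1)$; this boundary case is dispatched in one line, since a critical point $u$ with $\|u\|_{H_\la}=r_1$ would be a minimizer of the convex functional $I_{\la,a,f}$ on $\overline{B(r_1)}$, contradicting $\inf_{\|u\|_{H_\la}=r_1}I_{\la,a,f}(u)>0\ge I_{\la,a,f}\left(\mathcal{U}_{a,f}\right)$.
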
 
\begin{proof}
	We skip the proof for brevity. The proof can be concluded with the necessary modifications for the hyperbolic space. For details, we 
	refer \cite{Adachi}.
\end{proof}

\section{Palais-Smale Characterization} \label{PS1.1}

\noi In this section, we study the Palais-Smale sequences (PS sequences) corresponding to the problem \eqref{4.aa}. 
We say a sequence $u_{n} \in H^{1}\left(\mathbb{B}^{N}\right)$ is a Palais-Smale sequence for $I_{\lambda,a,f}$ at a level $d$ if $I_{\lambda,a,f}\left(u_{n}\right) \rightarrow d$ and $I_{\lambda,a,f}^{\prime}\left(u_{n}\right) \rightarrow 0$ in $H^{-1}\left(\mathbb{B}^{N}\right) .$ One can easily see that PS sequences are bounded. 
Throughout this section, we assume $a(x) \rightarrow 1$ as $d(x,0) \rightarrow \infty.$

\medskip

In the subsequent propositions, we examine the Palais-Smale condition for $I_{\la,a, f}(u)$ and $J_{\la,a, f}(v)$. In particular, we prove the following proposition :
\begin{prop}\label{prop-4.1}
	Assume  $0<a \in L^{\infty}\left(\mathbb{B}^{N}\right), a(x) \rightarrow 1$ as $d(x,0) \rightarrow \infty$ and $0 \not \equiv f \in H^{-1}\left(\mathbb{B}^{N}\right)$ is a non-negative functional and suppose that a sequence $\left\{u_{j}\right\}_{j=1}^{\infty} \subset H^{1}\left(\mathbb{B}^{N}\right)$  satisfies
	\begin{equation*}
		\begin{aligned}
			&I_{\la,a, f}^{\prime}\left(u_{j}\right) \rightarrow 0 \quad \text { in } H^{-1}\left(\mathbb{B}^{N}\right), \\
			&I_{\la,a, f}\left(u_{j}\right) \rightarrow c \in \mathbb{R}
		\end{aligned}
	\end{equation*}
	as $j \rightarrow \infty$. Then there exists a subsequence - still denoted by $\left\{u_{j}\right\}_{j=1}^{\infty}$, a critical point $u_{0}(x)$ of $I_{\la,a, f}(u)$, an integer $\ell \in \mathbb{N} \cup\{0\}$, and $\ell$ sequences of points $\left\{y_{j}^{1}\right\}_{j=1}^{\infty}, \ldots,\left\{y_{j}^{\ell}\right\}_{j=1}^{\infty} \subset \mathbb{B}^{N}$ such that
	\begin{enumerate}
		\item $d(y_{j}^{k},0) \rightarrow \infty \text { as } j \rightarrow \infty \;\;\forall k=1,2, \ldots, \ell,$ 
		
		\medskip 
		
		\item $d(y_{j}^{k},y_{j}^{k^{\prime}}) \rightarrow \infty \text { as } j \rightarrow \infty \text { for } k \neq k^{\prime},$ 
		
		\medskip 
		\item $\left\|u_{j}(x)-\left(u_{0}(x)+\sum_{k=1}^{\ell} w(\tau_{-y_{j}^{k}}(x))\right)\right\|_{{H_{\la}}} \rightarrow 0$ as $j \rightarrow \infty$, 
		
		\medskip 
		\item $ I_{\la,a, f}\left(u_{j}\right) \rightarrow I_{\la,a, f}\left(u_{0}\right)+\ell I_{\la,1,0}(w)$ as $j \rightarrow \infty,$
	\end{enumerate}
	\label{PS1}
	where $\tau_a,$ $a \in \bn$ denotes the hyperbolic translation, and $w$ is the unique positive radial solution to the unperturbed equation. 
\end{prop}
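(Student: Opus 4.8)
The plan is to establish this as a global compactness result in the spirit of Struwe's profile decomposition, adapted to the hyperbolic setting. First I would exploit the boundedness of the Palais--Smale sequence (which follows from the standard argument combining $I_{\la,a,f}(u_j)\to c$ and $I'_{\la,a,f}(u_j)\to 0$, using $1<p<2^\star-1$) to extract a weakly convergent subsequence $u_j\rightharpoonup u_0$ in $H^1(\bn)$. A routine argument shows $u_0$ is a critical point of $I_{\la,a,f}$, so $u_0$ solves \eqref{4.pp}. Setting $v_j^1:=u_j-u_0$, I would show $v_j^1$ is a Palais--Smale sequence for the \emph{limit functional} $I_{\la,1,0}$ (the $a(x)\to1$ hypothesis and the fact that $f$ is a fixed element of $H^{-1}$ are exactly what is needed to discard the $a$- and $f$-dependent pieces in the limit), and that the Brezis--Lieb lemma gives the splitting $I_{\la,a,f}(u_j)=I_{\la,a,f}(u_0)+I_{\la,1,0}(v_j^1)+o(1)$ together with the analogous norm splitting $\|u_j\|_\lambda^2=\|u_0\|_\lambda^2+\|v_j^1\|_\lambda^2+o(1)$.

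The core of the argument is an iterative bubbling procedure on the residual sequence. If $v_j^1\to0$ strongly we are done with $\ell=0$. Otherwise, a nonvanishing/concentration argument (a hyperbolic analogue of P.-L.~Lions' lemma) produces points $y_j^1\in\bn$ with $d(y_j^1,0)\to\infty$ — the escape to infinity is forced precisely because the limit problem \eqref{3d}/\eqref{euc} has no nontrivial solution concentrating at a finite point once $u_0$ has been subtracted — such that the translated sequence $v_j^1(\tau_{y_j^1}(\cdot))$ converges weakly to a nonzero limit. Here I would use the isometry invariance of the $H_\lambda$-norm and of $I_{\la,1,0}$ under the hyperbolic translations $\tau_b$ defined in \eqref{hypt}: the translated sequence is again Palais--Smale for $I_{\la,1,0}$, and its weak limit is a nonzero critical point of $I_{\la,1,0}$, hence by the uniqueness result of Mancini--Sandeep \cite{MS} it must be (a translate of) the radial bubble $w$. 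Subtracting the bubble $w(\tau_{-y_j^1}(x))$ defines $v_j^2$, and one iterates. At each stage the energy drops by the fixed positive quantity $I_{\la,1,0}(w)$, so the energy bound forces the process to terminate after finitely many steps $\ell$, yielding conclusions (3) and (4). The separation properties (1) $d(y_j^k,0)\to\infty$ and (2) $d(y_j^k,y_j^{k'})\to\infty$ for $k\ne k'$ come out of the nonvanishing extraction: if two bubble centers stayed at bounded hyperbolic distance, the corresponding translated profiles would interact and overlap, contradicting the orthogonality of the decomposition (equivalently, the norm-splitting forces the cross terms $\langle w(\tau_{-y_j^k}\cdot),w(\tau_{-y_j^{k'}}\cdot)\rangle_{H_\lambda}\to0$, which by the exponential decay \eqref{esti-limit} of $w$ holds iff the centers separate).

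The main obstacle I expect is the interaction estimate controlling the cross terms between distinct bubbles and between the bubbles and $u_0$, and confirming that the bubbles escape to infinity rather than concentrating. In the Euclidean case one uses the explicit decay $W(x)\sim|x|^{-(N-1)/2}e^{-|x|}$ and translation invariance of $\mathbb{R}^N$; here the analogous role is played by the decay \eqref{esti-limit} with rate $c(N,\lambda)$ and the homogeneity of $\bn$ under the isometry group, but the non-commutative structure of hyperbolic translations $\tau_b$ makes the bookkeeping of composed translations $\tau_{-y_j^k}\circ\tau_{y_j^{k'}}$ more delicate than Euclidean vector addition. I would handle this by reducing every interaction integral to an estimate of the form $\int_{\bn}w(\tau_{-y_j^k}x)\,w(\tau_{-y_j^{k'}}x)\,\vnn$ and bounding it using \eqref{esti-limit} together with the asymptotic behavior of the volume element, showing it is $O(e^{-c(N,\lambda)\,d(y_j^k,y_j^{k'})})$; the separation hypotheses then guarantee these vanish. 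The fact that $w\notin L^p(\bn)$ for $p\in[1,2)$ — flagged in the introduction as the key difficulty — does not obstruct this particular decomposition since only $L^2$- and $L^{p+1}$-type pairings enter, but it signals that the quantitative refinements needed later (Section~\ref{KER}) will require more care than in the Euclidean setting.
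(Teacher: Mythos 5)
Your proposal is correct and follows essentially the same route as the paper: the authors do not write out a proof but instead cite \cite[Proposition~3.1]{DG} together with the Euclidean references \cite{Lions1,Lions,MSV}, which is precisely the Struwe--Lions concentration-compactness/bubbling scheme you describe --- bounded PS sequence, weak limit as critical point, iterative extraction of profiles escaping to infinity along hyperbolic translations, identification of each profile with $w$ via the Mancini--Sandeep uniqueness theorem \cite{MS}, and termination by energy quantization. There is no substantive difference in approach.
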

\begin{proof}
	The proof is a straightforward adaption of \cite[Proposition~3.1]{DG} in the case $f \not\equiv 0.$ We also refer (\cite{Lions1}, \cite{Lions} and \cite{MSV}) for the Euclidean case. 
\end{proof}

\medskip 

Next, we study the Palais-Smale condition for $J_{\la,a,f}.$

\begin{prop}\label{prop4.cc}
	Suppose $\|f\|_{H^{-1}\left(\mathbb{B}^{N}\right)} \leq d_{2}$ for $d_{2}>0$ as given in Proposition \ref{prop4.bb}. Then,
	\begin{enumerate}[label=(\alph*)]
		\item As the $\operatorname{dist}$ $_{H_{\la}\left(\mathbb{B}^{N}\right)}\left(v_j, \partial \tilde{\Sigma}_{+}\right)=\inf \left\{\left\|v_{j}-u\right\|_{H_{\la}}: u \in \Sigma, u_{+} \equiv 0\right\} {\xrightarrow{j}}\;\;0$ \\ implies  $J_{\la,a, f}\left(v_{j}\right) \rightarrow \infty$. 
		
		\medskip
		\item Suppose that $\left\{v_{j}\right\}_{j=1}^{\infty} \subset \tilde{\Sigma}_{+}$ satisfies as $j \rightarrow \infty$
	\end{enumerate}
	\begin{equation}
		\begin{aligned}
			&J_{\la,a, f}\left(v_{j}\right) \rightarrow c \text { for some } c>0, \\
			&\left\|J_{\la,a, f}^{\prime}\left(v_{j}\right)\right\|_{T_{v_{j}}^{*} \tilde{\Sigma}_{+}} \equiv \sup \left\{J_{\la,a, f}^{\prime}\left(v_{j}\right) h;\; h \in T_{v_{j}} \tilde{\Sigma}_{+},\;\|h\|_{H_{\lambda}}=1\right\} \rightarrow 0.
		\end{aligned}
		\label{4.kk}
	\end{equation}
	Then there exists a subsequence - still denoted by $\left\{v_{j}\right\}_{j=1}^{\infty}$, a critical point $u_{0}(x) \in H^{1}\left(\mathbb{B}^{N}\right)$ of $I_{\la,a, f}(u)$, an integer $\ell \in \mathbb{N} \cup\{0\}$ and $\ell$ sequences of points $\left\{y_{j}^{1}\right\}_{j=1}^{\infty}, \ldots,\left\{y_{j}^{\ell}\right\}_{j=1}^{\infty} \subset \mathbb{B}^{N}$ such that
	\begin{enumerate}
		\item $d(y_{j}^{k},0) \rightarrow \infty \text { as } j \rightarrow \infty \;\;\forall k=1,2, \ldots, \ell,$ 
		\medskip
		
		\item $d(y_{j}^{k},y_{j}^{k^{\prime}}) \rightarrow \infty \text { as } j \rightarrow \infty \text { for } k \neq k^{\prime},$ 
		
		\medskip 
		\item $\left\| v_{j}(x)-\frac{u_{0}(x)+\sum_{k=1}^{\ell} w(\tau_{-y_{j}^{k}}(x))}{\left\|u_{0}(x)+\sum_{k=1}^{\ell} w(\tau_{-y_{j}^{k}}(x))\right\|_{{H_{\la}}}}\right\|_{{H_{\la}}} \rightarrow 0 \text { as } j \rightarrow \infty,\\ \text{ where } \tau_a, a \in \bn \text{ denotes the hyperbolic translation,}$ 
		
		\medskip
		\item $J_{\la,a, f}\left(v_{j}\right) \rightarrow I_{\la,a, f}\left(u_{0}\right)+\ell I_{\la,1,0}(w) \text { as } j \rightarrow \infty.$
	\end{enumerate}
\end{prop}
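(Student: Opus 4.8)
The plan is to reduce everything to the \emph{unconstrained} Palais--Smale decomposition for $I_{\la,a,f}$ recorded in Proposition~\ref{prop-4.1}, exploiting the fibering relation between $J_{\la,a,f}$ and $I_{\la,a,f}$ supplied by Lemma~\ref{lem4.cc} and Proposition~\ref{prop4.bb}.

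For part (a), I would argue directly from the explicit formula \eqref{4.cc}. If $\operatorname{dist}_{H_\la}(v_j,\partial\tilde\Sigma_+)\to0$, choose near-optimal competitors $u_j\in\Sigma$ with $u_{j+}\equiv0$ (so $u_j\le0$) and $\|v_j-u_j\|_{H_\la}\to0$. Since $u_j\le0$, one has the pointwise bound $0\le v_{j+}\le|v_j-u_j|$, whence $\|v_{j+}\|_{L^{p+1}(\bn)}\le\|v_j-u_j\|_{L^{p+1}(\bn)}\to0$ by the Poincar\'e--Sobolev embedding. Consequently $\int_{\bn}a\,v_{j+}^{p+1}\,\vnn\to0$, so $J_{\la,a,0}(v_j)\to\infty$ by \eqref{4.cc}, and the lower bound in Lemma~\ref{lem4.bb}(ii) (with $\varepsilon$ fixed) forces $J_{\la,a,f}(v_j)\to\infty$.

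For part (b), set $u_j:=t_{a,f}(v_j)\,v_j$; the core of the proof is to check that $\{u_j\}$ is a Palais--Smale sequence for $I_{\la,a,f}$ at level $c$. The level is immediate: $I_{\la,a,f}(u_j)=J_{\la,a,f}(v_j)\to c$ by the definition of $t_{a,f}$ in Lemma~\ref{lem4.cc}(ii). For the derivative, decompose an arbitrary $\phi\in H^1(\bn)$ as $\phi=h+\langle\phi,v_j\rangle_{H_\la}\,v_j$ with $h\in T_{v_j}\tilde\Sigma_+$. The $v_j$-component contributes nothing, because $t_{a,f}(v_j)$ is a critical point of $t\mapsto I_{\la,a,f}(tv_j)$, i.e. $I_{\la,a,f}'(u_j)v_j=0$. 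On the tangential part, Proposition~\ref{prop4.bb}(i) gives $I_{\la,a,f}'(u_j)h=t_{a,f}(v_j)^{-1}J_{\la,a,f}'(v_j)h$; since $\|h\|_{H_\la}\le\|\phi\|_{H_\la}$ and $t_{a,f}(v_j)$ is bounded below by the positive constant in \eqref{4.jj}, we obtain $\|I_{\la,a,f}'(u_j)\|_{H^{-1}(\bn)}\le t_{a,f}(v_j)^{-1}\|J_{\la,a,f}'(v_j)\|_{T^*_{v_j}\tilde\Sigma_+}\to0$. Thus $\{u_j\}$ is a genuine PS sequence for $I_{\la,a,f}$, hence bounded, so $t_{a,f}(v_j)=\|u_j\|_{H_\la}$ is also bounded above.

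Now I apply Proposition~\ref{prop-4.1} to $\{u_j\}$, obtaining a critical point $u_0$, an integer $\ell$, and sequences $\{y_j^k\}$ satisfying conclusions (1),(2), the decomposition $\|u_j-U_j\|_{H_\la}\to0$ with $U_j:=u_0+\sum_{k=1}^{\ell}w(\tau_{-y_j^k}(\cdot))$, and $I_{\la,a,f}(u_j)\to I_{\la,a,f}(u_0)+\ell\,I_{\la,1,0}(w)$. Since $v_j=u_j/\|u_j\|_{H_\la}$ and $\|u_j\|_{H_\la}=t_{a,f}(v_j)$ is bounded above and below, the normalization map $w\mapsto w/\|w\|_{H_\la}$ is Lipschitz on the region $\{\|w\|_{H_\la}\ge t_{\min}>0\}$ (where $U_j$ eventually lies), and applying it to $u_j$ and $U_j$ converts $\|u_j-U_j\|_{H_\la}\to0$ into conclusion (3). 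Combining the energy identity with $I_{\la,a,f}(u_j)=J_{\la,a,f}(v_j)\to c$ yields conclusion (4). The main obstacle is precisely this reduction: the PS condition for $J_{\la,a,f}$ controls only the \emph{tangential} derivative along $T_{v_j}\tilde\Sigma_+$, so one must verify that the missing radial direction is automatically killed (which is exactly the maximality defining $t_{a,f}(v_j)$) and that the normalizing factor $t_{a,f}(v_j)$ neither degenerates nor blows up; the uniform lower bound \eqref{4.jj} together with the automatic boundedness of PS sequences is what makes the passage between the two gradients, and back again, quantitatively safe.
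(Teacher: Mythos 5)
Your proposal is correct and follows essentially the same route as the paper: part (a) via the formula \eqref{4.cc} for $J_{\la,a,0}$ together with the lower bound of Lemma \ref{lem4.bb}(ii), and part (b) by showing $u_j := t_{a,f}(v_j)v_j$ is a Palais--Smale sequence for $I_{\la,a,f}$ (using \eqref{4.gg}, the vanishing of the radial derivative at $t_{a,f}(v_j)$, and the lower bound \eqref{4.jj}) and then invoking the unconstrained decomposition of Proposition \ref{prop-4.1}. The only difference is that you spell out details the paper compresses into ``the rest follows'' --- notably the pointwise bound giving $(v_j)_+\to 0$ in part (a) and the Lipschitz normalization step converting the decomposition of $u_j$ into conclusion (3) --- which is a matter of exposition, not of method.
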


\medskip

\begin{proof}
	For any $\varepsilon \in(0,1)$ and using \eqref{4.h} and \eqref{4.cc}, we obtain,
	\begin{equation*}
		\begin{aligned}
			J_{\la,a, f}\left(v_{j}\right) & \geq(1-\varepsilon)^{\frac{p+1}{p-1}} J_{\la,a, 0}\left(v\right)-\frac{1}{2 \varepsilon}\|f\|_{H^{-1}\left(\mathbb{B}^{N}\right)}^{2} \\
			& \geq(1-\varepsilon)^{\frac{p+1}{p-1}}\left(\frac{1}{2}-\frac{1}{p+1}\right)\left(\int_{\mathbb{B}^{N}} a(x) v_{j +}^{p+1} \mathrm{~d} V_{\mathbb{B}^{N}}\right)^{-\frac{2}{p-1}}-\frac{1}{2 \varepsilon}\|f\|_{H^{-1}\left(\mathbb{B}^{N}\right)}^{2}.
		\end{aligned}
	\end{equation*}
	As $\operatorname{dist}\left(v_{j}, \partial \tilde{\Sigma}_{+}\right) \rightarrow 0$ gives
	\begin{equation*}
		\begin{aligned}
			&\left(v_{j}\right)_{+} \rightarrow 0 \text { in } H^{1}\left(\mathbb{B}^{N}\right), \\
			&\left(v_{j}\right)_{+} \rightarrow 0 \text { in } L^{p+1}\left(\mathbb{B}^{N}\right).
		\end{aligned}
	\end{equation*}
	Therefore, $$\left|\int_{\mathbb{B}^{N}} a(x) v_{j}^{p+1} \mathrm{~d} V_{\mathbb{B}^{N}}\right| \leq\left.\|a\|\right._{L^{\infty}\left(\mathbb{B}^{N}\right)} \int_{\mathbb{B}^{N}}\left|v_{j +}\right|^{p+1}\mathrm{~d} V_{\mathbb{B}^{N}} \xrightarrow{j} 0.$$

	Hence $J_{\la,a, f}\left(v_{j}\right) \rightarrow \infty$ as dist $_{H^{1}\left(\mathbb{B}^{N}\right)}\left(v_{j}, \partial \tilde{\Sigma}_{+}\right) \rightarrow 0.$ This proves part $(a).$
	\medskip
	
	For part $(b),$ using \eqref{4.jj} and \eqref{4.gg}, we get
	\begin{equation*}
		\begin{aligned}
			\left\|I_{\la,a, f}^{\prime}\left(t_{a, f}\left(v_{j}\right) v_{j}\right)\right\|_{H^{-1}\left(\mathbb{B}^{N}\right)} &=\frac{1}{t_{a, f}\left(v_{j}\right)}\left\|J_{\la,a, f}^{\prime}\left(v_{j}\right)\right\|_{T_{v_{j}}^{*} \tilde{\Sigma}_{+}}
			\\ & \leq\left(p S_{1,\la}^{-\frac{p+1}{2}}\right)^{\frac{1}{p-1}}\left\|J_{\la,a, f}^{\prime}\left(v_{j}\right)\right\|_{T_{v_{j}} \tilde{\Sigma}_{+}} \stackrel{j}{\rightarrow} 0.\\
		\end{aligned}
	\end{equation*}
	Further, we also have $I_{\la,a, f}\left(t_{a,f}(v_{j})v_{j}\right)=J_{\la,a, f}\left(v_{j}\right) \rightarrow c$ as $j \rightarrow \infty$. Applying Palais-Smale lemma for $I_{\la,a, f}(u)$ (Proposition \ref{PS1}), the rest follows. 
	
\end{proof}

\medskip 

The subsequent corollary is an outcome of the above Proposition \ref{prop4.cc}.
Before moving to the corollary, note that we say $J_{\la,a, f}(v)$ satisfies $(\mathrm{PS})_{c}$ if and only if any sequence $\left(v_{j}\right)_{j=1}^{\infty} \subseteq$ $\tilde{\Sigma}_{+}$satisfying \eqref{4.kk} has a strongly convergent subsequence in $H^{1}\left(\mathbb{B}^{N}\right)$.

\begin{cor}\label{cor4a}
	Suppose that $\|f\|_{H^{-1}\left(\mathbb{B}^{N}\right)} \leq d_{2}$ for $d_{2}$ as in Proposition \ref{prop4.bb}. Then $J_{\la, a, f}(v)$ satisfies the condition $(\mathrm{PS})_{c}$ for $c<I_{\la, a, f}\left(\mathcal{U}_{a, f} (x)\right)+I_{\la,1,0}(w)$ where $w$ is the unique radial solution of \eqref{3d} and $\mathcal{U}_{a, f} $ is the critical point of 
	$I_{\la, a, f}$  obtained in Proposition~\ref{prop4.aa}.
	\label{cor1.8}
\end{cor}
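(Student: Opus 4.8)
My plan is to feed a Palais--Smale sequence for $J_{\la,a,f}$ into the profile decomposition of Proposition~\ref{prop4.cc} and then use the strict energy gap in the hypothesis to exclude any bubbling. I would start from a sequence $\{v_j\}\subset\tilde\Sigma_+$ satisfying \eqref{4.kk} with $J_{\la,a,f}(v_j)\to c$, where $c<I_{\la,a,f}(\mathcal U_{a,f})+I_{\la,1,0}(w)$ and $c>0$. Since $\|f\|_{H^{-1}(\bn)}\le d_2$, Proposition~\ref{prop4.cc} applies and yields, along a subsequence, a critical point $u_0$ of $I_{\la,a,f}$, an integer $\ell\in\mathbb N\cup\{0\}$, and translation points $\{y_j^k\}$ realizing items (1)--(4); in particular the energy splits as
\begin{equation*}
	c=I_{\la,a,f}(u_0)+\ell\,I_{\la,1,0}(w).
\end{equation*}

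The heart of the argument is to show $\ell=0$. First I would note that $u_0$, being a critical point of $I_{\la,a,f}$, solves \eqref{4.pp}; since $f$ is a non-negative functional, Remark~\ref{rmk1.1} forces $u_0\ge 0$, so $u_0$ is either $\equiv 0$ or a positive solution of \eqref{4.aa}. In either case one has $I_{\la,a,f}(u_0)\ge I_{\la,a,f}(\mathcal U_{a,f})$: for $u_0\not\equiv 0$ this is the minimality of the energy of $\mathcal U_{a,f}$ among critical points (to be extracted from Proposition~\ref{prop4.aa}), while for $u_0\equiv 0$ it follows from $I_{\la,a,f}(0)=0>I_{\la,a,f}(\mathcal U_{a,f})$, the first solution having strictly negative energy. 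Arguing by contradiction, suppose $\ell\ge 1$. Using $I_{\la,1,0}(w)>0$ (the positive mountain-pass energy of the unperturbed equation \eqref{3d}), I obtain
\begin{equation*}
	c=I_{\la,a,f}(u_0)+\ell\,I_{\la,1,0}(w)\ge I_{\la,a,f}(\mathcal U_{a,f})+I_{\la,1,0}(w),
\end{equation*}
which contradicts $c<I_{\la,a,f}(\mathcal U_{a,f})+I_{\la,1,0}(w)$. Hence $\ell=0$.

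With $\ell=0$ the energy identity collapses to $c=I_{\la,a,f}(u_0)$, and since $c>0$ this forces $u_0\not\equiv 0$. Then $v_0:=u_0/\|u_0\|_{H_\la}$ is a well-defined element of $\tilde\Sigma_+$ (as $u_0\ge 0$, $u_0\not\equiv 0$), and part (3) of Proposition~\ref{prop4.cc} with the empty bubble sum reduces to $\|v_j-v_0\|_{H_\la}\to 0$. This is exactly strong convergence of the subsequence in $H^1(\bn)$, so $J_{\la,a,f}$ satisfies $(\mathrm{PS})_c$ at the prescribed level.

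I expect the genuine obstacle to be the inequality $I_{\la,a,f}(u_0)\ge I_{\la,a,f}(\mathcal U_{a,f})$, namely that the regular part of the decomposition cannot carry less energy than the first solution $\mathcal U_{a,f}$. The two sign facts that make the gap effective, $I_{\la,1,0}(w)>0$ and $I_{\la,a,f}(\mathcal U_{a,f})<0$, are routine, as is the energy bookkeeping in the profile decomposition; the real input is the variational characterization of $\mathcal U_{a,f}$ as the least-energy critical point, which has to come from Proposition~\ref{prop4.aa}.
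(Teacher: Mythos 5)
Your overall architecture --- feed the Palais--Smale sequence into Proposition~\ref{prop4.cc}, use the energy identity $c = I_{\la,a,f}(u_0) + \ell\, I_{\la,1,0}(w)$, rule out $\ell \geq 1$ by the strict energy gap, and read off strong convergence from item (3) of the decomposition when $\ell = 0$ --- is exactly the paper's argument (the paper phrases it as: $(\mathrm{PS})_c$ can break only at levels $I_{\la,a,f}(u_0)+\ell\, I_{\la,1,0}(w)$ with $\ell\geq 1$, and the lowest such level is $I_{\la,a,f}(\mathcal{U}_{a,f})+I_{\la,1,0}(w)$). However, there is a genuine gap at the step you yourself flag as the crux: the inequality $I_{\la,a,f}(u_0) \geq I_{\la,a,f}(\mathcal{U}_{a,f})$ for an \emph{arbitrary} critical point $u_0$ of $I_{\la,a,f}$. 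You propose to extract this from Proposition~\ref{prop4.aa}, but that proposition cannot deliver it: it is a purely local statement, asserting that $\mathcal{U}_{a,f}$ is the unique critical point inside the ball $B(r_1)$ and that it minimizes $I_{\la,a,f}$ over $B(r_1)$ only. It says nothing about critical points lying outside $B(r_1)$; as far as Proposition~\ref{prop4.aa} is concerned, the limit profile $u_0$ produced by the decomposition could sit far outside that ball with energy strictly below $I_{\la,a,f}(\mathcal{U}_{a,f})$, and then your contradiction argument for excluding $\ell\geq 1$ collapses.

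The missing ingredient is the global classification of critical points, Proposition~\ref{prop4.bb}(iii), combined with Lemma~\ref{lem4.bb}(iii); this is precisely how the paper closes the step. By \eqref{4.hh}, every critical point of $I_{\la,a,f}$ other than $\mathcal{U}_{a,f}$ has the form $t_{a,f}(v_1)v_1$ with $v_1 \in \tilde{\Sigma}_{+}$ a critical point of $J_{\la,a,f}$, so its energy equals $J_{\la,a,f}(v_1) \geq \inf_{v\in\tilde{\Sigma}_{+}} J_{\la,a,f}(v) > 0$, the last inequality being Lemma~\ref{lem4.bb}(iii); since $I_{\la,a,f}(\mathcal{U}_{a,f}) = \inf_{u\in B(r_1)} I_{\la,a,f}(u) \leq I_{\la,a,f}(0) = 0$ by \eqref{eqcrit}, it follows that $\mathcal{U}_{a,f}$ is the least-energy critical point globally, which is what your argument needs. (Two minor remarks: the strict negativity $I_{\la,a,f}(\mathcal{U}_{a,f})<0$ that you invoke in the case $u_0 \equiv 0$ is not needed --- the weak inequality $I_{\la,a,f}(\mathcal{U}_{a,f}) \leq 0$ suffices; and your restriction to $c>0$ is consistent with the paper, since \eqref{4.kk} only concerns positive levels and Lemma~\ref{lem4.bb}(iii) makes levels $c\leq 0$ vacuous.) With this substitution your proof is complete and coincides with the paper's.
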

\begin{proof}
	Proposition \ref{prop4.cc} suggests that the condition (PS)$_{c}$ breaks down only at levels
	\begin{equation*}
		c=I_{\la,a, f}\left(u_{0}\right)+\ell I_{\la,1,0}(w),
	\end{equation*}
	where $\ell \in \mathbb{N}$ and $u_{0} \in H^{1}\left(\mathbb{B}^{N}\right)$ is a critical point of $I_{\la,a, f}(u)$ .\\
	From Proposition  \ref{prop4.aa}, we have
	\begin{equation}
		I_{\la,a, f}\left(\mathcal{U}_{a, f} (x)\right)=\inf _{u \in B\left(r_{1}\right)} I_{\la,a, f}(u) \leq I_{\la,a, f}(0)=0,
		\label{eqcrit}
	\end{equation}
	Furthermore, all the critical points of $I_{\la,a, f}(u)$ except $\mathcal{U}_{a, f} (x)$ corresponds to a critical point $J_{\la,a, f}(v)$, which follows from  \eqref{4.hh}.  Thus  there exists $v_{1} \in \tilde{\Sigma}_{+}$ for a critical point  $u_{1}$ of $I_{\la,a,f}(u)$ such that $I_{\la,a, f}\left(u_{1}\right)=J_{\la,a, f}\left(v_{1}\right)>0$ by using $(iii)$ of Lemma \ref{lem4.bb}. Consequently,
	\begin{equation*}
		I_{\la,a, f}\left(\mathcal{U}_{a, f} (x)\right)=\inf \left\{I_{\la,a, f}\left(u_{0}\right) \mid u_{0} \in H^{1}\left(\mathbb{B}^{N}\right) \text { is a critical point of } I_{\la,a, f}(u)\right\}.
	\end{equation*}
	Hence $I_{\la,a, f}\left(\mathcal{U}_{a, f} (x)\right)+I_{\la,1,0}(w)$ is the lowest level where $(P S)_{c}$ breaks.
\end{proof}



\section{Asymptotic estimates for solutions of \eqref{4.aaa}}\label{Secasymp}
This section is devoted to deriving asymptotic estimates for positive solutions to \eqref{4.aaa} for $\lambda \leq 0$. It is worth noting that when $f \equiv 0,$ the precise estimates were
obtained by Sandeep-Mancini in their seminal paper (See \cite[Lemma~3.4]{MS}). Indeed they showed using the moving plane method that 
 all positive solutions to the homogeneous equation are radial with respect to a point. Further, asymptotic was obtained by analysing the corresponding ode. On the other hand,
 when dealing with $f\not\equiv 0$ and non-radial, the solution $u$ need not be radial; hence, this approach does not help us obtain asymptotic estimates for 
 solutions of $\eqref{4.aaa}.$ Thus we follow the approach of constructing suitable barriers as sub and super solutions to obtain the desired asymptotic estimates. When $f \equiv 0,$ we  recover the optimal estimates obtained by Sandeep-Mancini for radial solutions. In particular, we prove the following theorem : 

\medskip

\begin{thm}\label{lem5cc}
	Let  $u$  be a positive solution of \eqref{4.aaa} and $f \in L^2(\bn),$ non-negative and assume 
	$$
	f(x) \leq \,C\;\exp{-(k + \varepsilon)p\,d(x, 0)},
	$$
	for all $x \in \bn$ and for some positive constants $k,C,$ and $\varepsilon.$

	Then, for any $\delta>0$, there exist positive constants $C_{1}, C_{2}$ such that
	$$
	C_{1} \exp (-((N-1)+\delta)d(x,0)) \leqslant u(x) \leqslant C_{2} \exp (-((N-1)-\delta)d(x,0))
	$$
	for all $x \in \bn,$ and $\la = 0.$
	Furthermore, for $\la <  0$, there exist positive constants $C^{'}_{1}, C^{'}_{2}$ such that 
	$$
	C^{'}_{1} \exp (-(c^{'}(n, \la) +\delta)|\la|d(x,0)) \leqslant u(x) \leqslant C^{'}_{2} \exp (-(c^{'}(n, \la) -\delta)|\la|d(x,0))
	$$
	for all $x \in \bn$ and $c^{'}(n,\la)=\frac{(N-1)+\sqrt{(N-1)^{2}-4\la}}{2|\la|} $.
\end{thm}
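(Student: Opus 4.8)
The plan is to prove both inequalities by comparison with radial exponential barriers, exploiting the polar form \eqref{Laplacian_bn} of $\Delta_{\bn}$. Write $\rho := d(x,0)$ and, for $\mu < \tfrac{(N-1)^2}{4}$, set $L_\mu := -\Delta_{\bn} - \mu$. On radial functions $\Delta_{\bn}\phi = \phi'' + (N-1)\coth\rho\,\phi'$, so for $\phi = e^{-\alpha\rho}$,
\begin{equation*}
	L_\mu\phi = \big[-\alpha^2 + (N-1)\alpha\coth\rho - \mu\big]e^{-\alpha\rho},
\end{equation*}
and since $\coth\rho \to 1$ the bracket tends to $q_\mu(\alpha) := -\alpha^2 + (N-1)\alpha - \mu$ as $\rho \to \infty$. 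The roots of $q_\mu$ are $\alpha_\pm(\mu) := \tfrac12\big((N-1)\pm\sqrt{(N-1)^2-4\mu}\big)$, and the larger root $\alpha_+(\lambda) = c(N,\lambda)$ is exactly the claimed rate (e.g.\ $\alpha_+(0)=N-1$). A preliminary step records that $u(x)\to 0$ as $\rho\to\infty$: since $u \in H^1(\bn)$ its $H^1$-mass on $\{\rho>R\}$ tends to $0$, and local elliptic (Moser) estimates, valid since $p<2^*-1$ and $f\in L^2$, bound $u$ on far-out unit balls by this vanishing local mass. I will also use the weak maximum principle for $L_\mu$ on the exterior domain $\Omega_R:=\{\rho>R\}$: if $L_\mu v\ge 0$ in $\Omega_R$, $v\ge 0$ on $\partial\Omega_R$ and $v\to 0$ at infinity, then $v\ge 0$; this rests on the positive decaying supersolutions $e^{-\beta\rho}$ produced below together with $\mu<\tfrac{(N-1)^2}{4}$.

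For the lower bound fix $\delta>0$ and put $\gamma := c(N,\lambda)+\delta > \alpha_+(\lambda)$, so that $q_\lambda(\gamma)<0$ and hence $L_\lambda(e^{-\gamma\rho})\le 0$ for all $\rho\ge R$ with $R$ large. Choose $B>0$ small so that $\underline u := Be^{-\gamma\rho}$ satisfies $\underline u\le u$ on $\partial\Omega_R$, which is possible since $u>0$ is continuous, hence bounded below, on the compact sphere $\{\rho=R\}$. With $v:=u-\underline u$ the equation gives
\begin{equation*}
	L_\lambda v = (u^p + f) - L_\lambda\underline u \ge 0 \quad \text{in } \Omega_R,
\end{equation*}
because $u^p+f\ge 0$ and $L_\lambda\underline u\le 0$. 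As $v\ge 0$ on $\partial\Omega_R$ and $v\to 0$ at infinity, the maximum principle yields $u\ge Be^{-(c(N,\lambda)+\delta)\rho}$; note this half uses neither the decay of $f$ nor the smallness of $u$ in an essential way (beyond $v\to0$).

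For the upper bound I exploit the smallness of $u$. Given $\delta>0$, pick $\eta>0$ so small that $\alpha_+(\lambda+\eta) > c(N,\lambda)-\delta =: \beta$ (possible since $\alpha_+(\mu)$ is continuous and strictly decreasing in $\mu$), and then $R$ so large that $u^{p-1}\le\eta$ on $\Omega_R$. There the equation reads $-\Delta_{\bn}u-\lambda u = u^p+f \le \eta u + f$, i.e.\ $L_{\lambda+\eta}u \le f$. Since $\beta\in(\alpha_-(\lambda+\eta),\alpha_+(\lambda+\eta))$ we have $q_{\lambda+\eta}(\beta)>0$, whence $L_{\lambda+\eta}(Ae^{-\beta\rho}) \ge A\,q_{\lambda+\eta}(\beta)\,e^{-\beta\rho}$ for $\rho$ large. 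The hypothesis $f\le C\exp(-(k+\varepsilon)p\rho)$ forces $f$ to decay strictly faster than $e^{-\beta\rho}$ — this is exactly where the factor $p$ and the constant $k$ intervene — so after enlarging $A$ and $R$ one gets $L_{\lambda+\eta}(\bar u)\ge f$ with $\bar u := Ae^{-\beta\rho}$ and $\bar u\ge u$ on $\partial\Omega_R$. Applying the maximum principle to $\bar u-u$ gives $u\le Ae^{-(c(N,\lambda)-\delta)\rho}$, the asserted upper bound for $\lambda=0$; the case $\lambda<0$ is identical after writing $c(N,\lambda)=c'(n,\lambda)|\lambda|$ and relabelling $\delta$ as $\delta|\lambda|$.

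The main obstacle is the upper bound, precisely the superlinear term $u^p$: the device is to first establish $u\to 0$ so that $u^p$ is absorbed as the small linear perturbation $\eta u$, converting the nonlinear problem into a linear one to which exponential barriers and the maximum principle apply; the continuity of $\mu\mapsto\alpha_+(\mu)$ is what lets $\beta$ approach $c(N,\lambda)$ as $\eta\to 0$. A secondary technical point is the maximum principle on the unbounded domain $\Omega_R$, which I will justify through the positivity and decay of the constructed barriers and the coercivity ensured by $\lambda+\eta<\tfrac{(N-1)^2}{4}$. The decay hypothesis on $f$ is used only to keep the forcing term subordinate to these barriers; it plays no role in the lower estimate.
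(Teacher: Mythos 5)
Your proposal is correct, and at its core it is the same strategy as the paper's proof: radial exponential barriers, the polar form \eqref{Laplacian_bn} of $\Delta_{\bn}$, comparison via the maximum principle on exterior regions, and the preliminary fact $u\to 0$ at infinity used to tame the superlinear term. The differences are in execution, and two of them are substantive. First, for the upper bound the paper does not shift the spectral parameter: it proves the lower bound first, combines it with the decay hypothesis on $f$ to get $f\le u^{p}$ for $d(x,0)$ large, and then absorbs $u^{p}+f\le 2u^{p}=o(u)$ inside the differential inequality for $v_{2}-u$ on the region $\tilde{\Omega}(L)$ where $u>v_{2}$; your device $u^{p-1}\le\eta$, hence $L_{\lambda+\eta}u\le f$, makes the upper bound logically independent of the lower one, dominates $f$ by the barrier's surplus rather than by $u^{p}$, and reduces everything to a linear comparison. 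This is in fact tighter than the paper's step: as written, the paper's strict inequality $\Delta_{\bn}(v_{2}-u)<0$ is delicate near the part of $\partial\tilde{\Omega}(L)$ where $u=v_{2}$, since there $|\la|(u-v_{2})\to 0$ while the discarded term $2u^{p}$ is positive. Second, the paper applies the maximum principle only on the bounded sets $\Omega(L)$, $\tilde{\Omega}(L)$ and lets $L\to\infty$, whereas you invoke it on the unbounded exterior domain $\Omega_{R}$; since $\lambda+\eta$ can be positive (the case $\lambda=0$), the interior-minimum argument fails there, so the ``coercivity'' you allude to must be implemented energetically: test $L_{\lambda+\eta}(\bar u-u)\ge 0$ against $(\bar u-u)_{-}\in H^{1}_{0}(\Omega_{R})$ and use the spectral bound $\int_{\bn}|\nabla_{\bn}\phi|^{2}\vnn\ge\frac{(N-1)^{2}}{4}\int_{\bn}\phi^{2}\vnn$ together with $\lambda+\eta<\frac{(N-1)^{2}}{4}$. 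Your alternative suggestion of justifying it through the positive supersolution $e^{-s\rho}$ (ground-state transform) is circular here, because one does not yet know that $u$ decays faster than that supersolution. One caveat applying equally to you and to the paper: the hypothesis on $f$ must be read with $k\ge c(N,\la)$ (the paper's proof explicitly takes $k=c^{'}(N,\la)|\la|$); for an arbitrary small $k>0$ your claim that $f$ decays faster than $e^{-\beta\rho}$, and the paper's claim that $f\le u^{p}$, would both fail. Your uniform treatment of $\lambda=0$ and $\lambda<0$, versus the paper's two separate cases, is a further minor simplification.
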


\begin{proof}\quad
	The solution $u \in H^1(\bn),$ this immediately  implies $\lim_{d(x,0) \rightarrow \infty} u(x)=0 \text{ a.e.}$ Furthermore, using the
	Calderon-Zygmund estimate and elliptic regularity, we have $u \in C^{2}\left(\bn\right)$; thus, $\lim_{d(x,0) \rightarrow\infty} u(x)=0$ for all $x \in \bn.$
	The proof is divided into two cases: $\lambda < 0$ and $\lambda =0.$ 
	
	\medskip
	
	{\bf Case 1: $\lambda < 0$}
	
	Choose $\alpha >0$ such that $\frac{\alpha^{2}|\la|-1}{\alpha(N-1)}\geq1$. To be precise, $\alpha \in [c^{'}(N,\la), \infty)$ where $$c^{'}(N,\la)= \frac{(N-1)+\sqrt{(N-1)^{2}-4\la}}{2|\la|}. $$\\
	Thus we can choose $R_{1}>0$ large enough such that 
	\begin{equation}
		\alpha^{2}|\la|-\alpha(N-1)\coth{d(x,0)}\geq 1, \;\;\;\; \forall d(x,0) \geq R_{1}. \label{alpha1}
	\end{equation}
	For $m=\min \left\{\frac{1}{|\la|}u(x)\mid d(x,0) =R_{1}\right\}>0$, set $v_{1}(x) := v_1(r) \, =m e^{-\alpha |\la| \left(d(x,0)-R_{1}\right)},$ where $r := d(x, 0).$ Now for any $L>R_{1}$, denote
	$$
	\Omega(L)=\left\{x \in \bn \mid R_{1}< d(x,0) <L \quad \text { and } \quad |\la|v_{1}(x)>u(x)\right\} .
	$$
	Then $\Omega(L)$ is open. Moreover, for $x \in \Omega(L)$ and using \eqref{alpha1}\, we have 
	$$
	\begin{aligned}
		\Delta_{\bn}\left(u-|\la|v_{1}\right)(x) &= \Delta_{\bn} u(x) \,  \, - |\lambda| \, \Delta_{\bn}(v_1(x)) \\
		& = - \lambda u - \, u^p \, - \, f(x) - |\lambda| \, \left( \frac{\partial^2}{\partial r^2} v_1(r) + (N-1) \coth r \frac{\partial}{\partial r} v_1(r)  \right) \\
		&=- \lambda u -\, u^p \, - \, f(x)-|\la|\left[\alpha^{2}|\la|^{2}-\alpha|\la|(N-1)\coth{r}\right] v_{1}(x) \\
		& \leqslant |\la| u(x)-|\la|^{2}\left[\alpha^{2}|\la|-\alpha (N-1)\coth{r}\right] v_{1}(x) \\
		&\leqslant |\la|\left(u-|\la|v_{1}\right)(x) \\
		&<0
	\end{aligned}
	$$
	Applying the maximum principle, for $x \in \Omega(L)$ will result in
	$$
	\begin{aligned}
		u(x)-|\la|v_{1}(x) & \geqslant \min \left\{\left(u-|\la|v_{1}\right)(x) \mid x \in \partial \Omega(L)\right\} \\
		&=\min \left\{0, \min_{d(x,0)=L}\left(u-|\la|v_{1}\right)(x)\right\} .
	\end{aligned}
	$$
	Since $\displaystyle \lim _{d(x,0) \rightarrow+\infty} u(x)=\lim _{d(x,0) \rightarrow+\infty} v_{2}(x)=0$, by letting $L \rightarrow \infty$, we see that $\Omega(L)$ is empty and hence
	\begin{equation}
		u(x) \geqslant |\la| v_{1}(x) \text { for all }d(x,0) \geqslant R_{1},\label{barr1}
	\end{equation}
	$$
	$$
	By the supposition on $f(x)$ there exists some $\varepsilon, \text{ and } C>0$ such that
	\begin{equation}
		f(x) \leqslant C e^{-(c^{'}(N,\la)+\varepsilon)|\la|p\, d(x, 0)} \text { for all } x \in \bn \text {. } \label{condf}
	\end{equation}
	\eqref{barr1} will imply the existence of a $C_{1}>0$
	\begin{equation}
		u(x) \geqslant C_{1} e^{(c^{'}(N, \la)+\delta)|\la|d(x,0)} \quad \text { for all } x \in \bn, \quad \mbox{and for any } \ \delta > 0.  \label{1barr}
	\end{equation}
	Choosing  $\varepsilon$ appropriately, and using  \eqref{condf}, \eqref{1barr} together will provide $R_{2}>0$ such that
	$$
	(u(x))^p \geqslant f(x) \quad \text { for } \quad d(x,0) \geqslant R_{2} .
	$$
	Moreover,  since $p >1,$ there holds
	$$u^{p} \, = \, \circ(u) \text{ for  } d(x,0)\rightarrow  \infty.$$
	Let $\beta>0$ be such that $\beta^{2}|\la|- (N-1)\beta\leq 1$, i.e., $
	\beta \leq c(n, \la)^{'}.$\\\\
	Define $v_{2}(x)=M e^{-\beta |\la|\left(d(x,0)-R_{4}\right)}$, where
	$$
	M=\max \left\{u(x) \mid d(x,0) =R_{2}\right\}>0.
	$$
	Further, for any $L>R_{4}$, denote
	$$
	\tilde{\Omega}(L)=\left\{x \in \bn \mid R_{4}< d(x,0) <L \quad \text { and } \quad u(x)>v_{2}(x)\right\} .
	$$
	Then $\tilde{\Omega}(L)$ is open and, for $x \in \tilde{\Omega}(L)$,
	$$
	\begin{aligned}
		\Delta_{\bn}\left(v_{2}-u\right)(x) &=\left[\beta^{2}|\la|^{2}-\beta|\la|(N-1)\coth r\right] v_{2}(x) +\lambda u \,+ \,u^p+f(x) \\
		&\leqslant -\la v_{2}+ \lambda u \, + \, 2u^{p}\\
		& \leqslant -\la v_{2}+ \lambda u \, + \, \circ(u)\\
		& = -\la (v_{2}-u)(x)+ \circ(u)\\
		&<0.
	\end{aligned}
	$$
	By the maximum principle, for $x \in \widetilde{\Omega}(L)$,
	$$
	\begin{aligned}
		v_{2}(x)-u(x) & \geqslant \min \left\{\left(v_{2}-u\right)(x) \mid x \in \partial \tilde{\Omega}(L)\right\} \\
		&=\min \left\{0, \min _{d(x,0)=L}\left(v_{2}-u\right)(x)\right\} .
	\end{aligned}
	$$
	Since $\displaystyle \lim _{d(x,0) \rightarrow+\infty} u(x)=\lim _{d(x,0) \rightarrow+\infty} v_{2}(x)=0$, by letting $L \rightarrow \infty$, we see again that $\tilde{\Omega}(L)$ is empty and hence
	$$
	v_{2}(x) \geqslant u(x) \text { for all }d(x,0) \geqslant R_{4}.
	$$
	Now by choosing $\alpha = \beta = c^{'}(N,\la),$ the proof is complete. 
	\medskip 
	
	{\bf Case 2: $\lambda =0$}

	This case can also be tackled similarly by appropriately choosing the functions $v_{1}$ and $v_{2}$.\\
	To be precise, let 
	$$v_{1} = m e^{-\gamma \left(d(x,0)-R_{1}^{'}\right)} \text{ and } v_{2} = M e^{-\eta \left(d(x,0)-R_{2}^{'}\right)}  \text{ for some } \gamma,R_{1}^{'},\eta,R_{2}^{'}>0$$
	where $m=\min \left\{u(x)\mid d(x,0) =R_{1}^{'}\right\}>0$ and $M=\max \left\{u(x)\mid d(x,0) =R_{2}^{'}\right\}>0.$ \\
	Indeed $\gamma>0$ satisfies $\gamma> N-1$, and thus $R_{1}^{'}$ is chosen such that $\gamma- (N-1)\coth{r} > 0$ for all $r > R_{1}^{'}.$ Also, $R_{2}^{'}$ is chosen similarly as $R_{3}$ mentioned above. Further, we can conclude the lemma by applying the maximum principle in the hyperbolic balls of radius $R_{1}^{'}$ and $R_{2}^{'}$, and proceeding as in the previous case.
\end{proof}

\section{Key Energy Estimates}\label{KER}

This section is devoted to deriving key energy estimates for the functional $I_{\la, a, f}$ with  $a(x) \leq 1$. The subsequent 
energy estimates will play a pivotal role in the existence of solutions. In fact with the help of the proposition~\ref{energy-prop},
we shall show that the energy of the functional is below the critical level given in the Palais-Smale decomposition.

\begin{prop}\label{energy-prop}
	Let $a$ satisfies $0<a \in L^{\infty}\left(\mathbb{B}^{N}\right), a(x) \rightarrow 1$ as $d(x,0) \rightarrow \infty$ and \eqref{acond}. Further, assume that $\|f\|_{H^{-1}\left(\mathbb{B}^{N}\right)}{\leq}\; d_{2},\; f \geq 0,\; f \not \equiv 0$ and $\tilde{\mathcal{U}}_{a, f}$ is any critical point of $I_{\la,a,f}$. Then there exists $R > 0$ such that
	\begin{equation}
		I_{\la,a, f}\left(\tilde{\mathcal{U}}_{a, f} (x)+t w(\tau_{-y}(x))\right)<I_{\la,a, f}\left(\tilde{\mathcal{U}}_{a, f}(x)\right)+I_{\la,1,0}(w),\;\;\; \label{4.ab}
	\end{equation} 
	\label{enerest}
	for all $d(y,0) \geq R$ and $t>0$.\\
	Moreover, if $a$ satisfies $ \left(\mathbf{A}_{3}\right),$ i.e., $a \equiv 1$, we have
	\begin{equation}
		\sup _{t \geqslant 0} I_{\la,1,f}\left(\tilde{\mathcal{U}}_{1, f}+t w\left(\tau_{y}(x)\right)\right)<I_{\la,1,f}\left(\tilde{\mathcal{U}}_{1, f}\right)+I_{\la,1,0}(w), 
		\label{energy2}
	\end{equation}
	for all $d(y,0) \geq R.$
\end{prop}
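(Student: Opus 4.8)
The plan is to set $U:=\tilde{\mathcal U}_{a,f}$ and $w_y(x):=w(\tau_{-y}(x))$ and to study the scalar function $g(t):=I_{\la,a,f}(U+tw_y)-I_{\la,a,f}(U)$ directly. First I would record two structural facts. Since $f\ge 0$ and $U$ is a critical point of $I_{\la,a,f}$, Remark~\ref{rmk1.1} together with the strong maximum principle gives $U>0$ on $\bn$, so that $(U+tw_y)_+=U+tw_y$ for every $t\ge 0$ and the positive-part truncation disappears. Moreover $\tau_{-y}$ is an isometry, hence $w_y$ solves $-\Delta_\bn w_y-\la w_y=w_y^p$ and $\|w_y\|_\la^2=\int_\bn w_y^{p+1}=\int_\bn w^{p+1}$. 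Expanding $\tfrac12\|U+tw_y\|_\la^2$ and testing the equation for $U$ against $w_y$, the cross term $\langle U,w_y\rangle_{H_\la}$ produces exactly $t\int_\bn aU^pw_y+t\int_\bn fw_y$, and the latter cancels the linear term $-t\int_\bn fw_y$ coming from $f$. Thus $f$ drops out of $g$ entirely.

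Next I would insert the convexity remainder $r(t):=(U+tw_y)^{p+1}-U^{p+1}-(p+1)U^p(tw_y)-(tw_y)^{p+1}$, which is nonnegative because $p>1$ (its $tw_y$-derivative is $(p+1)[(U+tw_y)^p-U^p-(tw_y)^p]\ge 0$), and is strictly positive wherever $U,w_y>0$. A short computation then yields the exact identity
\[
g(t)=\psi(t)\int_\bn w^{p+1}+\frac{t^{p+1}}{p+1}\int_\bn(1-a)w_y^{p+1}-\frac{1}{p+1}\int_\bn a\,r(t),\qquad \psi(t):=\frac{t^2}{2}-\frac{t^{p+1}}{p+1}.
\]
Since $\psi$ attains its maximum only at $t=1$, with $\psi(1)\int_\bn w^{p+1}=I_{\la,1,0}(w)$, and since $g(t)\to-\infty$ as $t\to\infty$ (the $-t^{p+1}$ term dominates), the whole estimate reduces to controlling the two remaining terms: the $a$\emph{-deficit} $\int_\bn(1-a)w_y^{p+1}\ge 0$ and the \emph{interaction gain} $\int_\bn a\,r(t)\ge 0$.

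For $a\equiv 1$ (the estimate \eqref{energy2}) the deficit vanishes identically, so $g(t)=\psi(t)\int_\bn w^{p+1}-\tfrac1{p+1}\int_\bn r(t)<\psi(1)\int_\bn w^{p+1}=I_{\la,1,0}(w)$ for every $t>0$, the strict sign coming from $r(t)>0$ (here $U\not\equiv 0$ because $f\not\equiv 0$, and $U>0$ by the maximum principle). Coercivity forces the supremum over $t\ge 0$ to be attained at a finite $t^\ast$, where the strict inequality persists; this gives \eqref{energy2} for all $y$, with no largeness of $d(y,0)$ required.

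For $a\le 1$ under \eqref{acond} the deficit is genuinely present with the unfavourable sign, and the entire difficulty is to show that it is dominated by the interaction gain. For $t$ bounded away from $1$, the fixed negative quantity $[\psi(t)-\psi(1)]\int_\bn w^{p+1}$ (together with coercivity for large $t$) beats both error terms once $d(y,0)$ is large; the binding case is $t\approx 1$, where I must establish $\int_\bn(1-a)w_y^{p+1}<\int_\bn a\,r(1)$ for $d(y,0)\ge R$. The deficit I would bound from above via \eqref{acond}: $\int_\bn(1-a)w_y^{p+1}\le C\int_\bn e^{-\delta d(x,0)}w_y^{p+1}$, which after the change of variables $x=\tau_y(z)$ and the triangle inequality $d(\tau_y(z),0)\ge d(y,0)-d(z,0)$ decays exponentially in $d(y,0)$. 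The gain I would bound from below by noting $r(1)\ge (p+1)U w_y^{p}$ near $y$, so $\int_\bn a\,r(1)\gtrsim \int_\bn U w_y^{p}$, which I would estimate using the pointwise decay \eqref{esti-limit} of $U$ and of $w$. The main obstacle is precisely this interaction integral: because $w\notin L^p(\bn)$ for $1<p<2$, it cannot be read off from naive $L^p$ bounds and must instead be computed through the exponential profiles combined with the hyperbolic-translation and triangle estimates. Balancing the two competing exponential rates, and verifying that \eqref{acond} renders the deficit negligible against the gain for $d(y,0)$ large, is the technical heart of this section.
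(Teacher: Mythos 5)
Your algebraic setup coincides with the paper's: your identity
\[
I_{\la,a,f}\bigl(\tilde{\mathcal{U}}_{a,f}+t w_y\bigr)=I_{\la,a,f}\bigl(\tilde{\mathcal{U}}_{a,f}\bigr)+I_{\la,1,0}(tw)+\frac{t^{p+1}}{p+1}\int_{\bn}(1-a)\,w_y^{p+1}\vn-\frac{1}{p+1}\int_{\bn}a\,r(t)\vn ,
\qquad w_y:=w(\tau_{-y}(\cdot)),
\]
is exactly the paper's decomposition into the deficit $(I)$ and the gain $(II)$ (your $\psi(t)\int_{\bn}w^{p+1}\vnn$ is $I_{\la,1,0}(tw)$), the cancellation of $f$ through the critical-point equation is the same, the reduction to $t$ in a compact interval is the same, and your treatment of the case $a\equiv 1$ is correct and agrees with the paper's.

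The gap is in the case $a\le 1$, in your lower bound for the interaction gain, which is the technical heart you yourself identify. You localize near $y$, use $r(1)\gtrsim \tilde{\mathcal{U}}_{a,f}\, w_y^{p}$ there, and then propose to estimate $\int_{\bn}\tilde{\mathcal{U}}_{a,f}\,w_y^{p}\vnn$ ``using the pointwise decay \eqref{esti-limit} of $U$ and of $w$''. But \eqref{esti-limit} is a property of solutions of the homogeneous equation \eqref{3d}; it does not apply to $\tilde{\mathcal{U}}_{a,f}$, which solves the inhomogeneous equation. Under the hypotheses of the proposition, $f$ is only a non-negative functional in $H^{-1}(\bn)$ of small norm: it has no pointwise decay and need not be radial, so no lower bound for $\tilde{\mathcal{U}}_{a,f}(x)$ as $d(x,0)\to\infty$ is available (the paper's own asymptotics, Theorem~\ref{lem5cc}, require pointwise exponential decay of $f$ and $\la\le 0$, neither of which is assumed here). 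Without a lower bound on $\tilde{\mathcal{U}}_{a,f}$ near $y$, your gain term could a priori decay faster in $d(y,0)$ than the deficit, and the comparison of exponential rates collapses.

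The paper circumvents this by localizing the gain on the unit ball centred at the \emph{origin} rather than at $y$: there $\tilde{\mathcal{U}}_{a,f}\ge r:=\min_{d(x,0)\le 1}\tilde{\mathcal{U}}_{a,f}>0$ by positivity and continuity alone, and the refined calculus inequality $(s+t)^{p+1}-s^{p+1}-t^{p+1}-(p+1)s^{p}t\ge A(r)\,t^{2}$ for $s\ge r$ gives
\[
(II)\ \gtrsim\ t^{2}\int_{d(x,0)\le 1}w_y^{2}\vn\ \gtrsim\ e^{-2(c(N,\la)+\varepsilon)(1+\varepsilon_R)\,d(y,0)},
\]
using only the known decay of $w$. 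The deficit, by \eqref{acond} (with $\delta$ taken larger than $c(N,\la)(p+1)+(N-1)$, as the paper's proof assumes at this step) and the decay of $w$, is $\lesssim e^{-(p+1)(c(N,\la)-\varepsilon')\,d(y,0)}$; since $p+1>2$, the gain dominates for $d(y,0)$ large. The quadratic-in-$w_y$ lower bound, taken where $\tilde{\mathcal{U}}_{a,f}$ is uniformly positive, is precisely what makes the argument independent of any decay information on $\tilde{\mathcal{U}}_{a,f}$; this is the ingredient missing from your sketch. Your route could be repaired by first proving a supersolution-type lower bound $\tilde{\mathcal{U}}_{a,f}\gtrsim e^{-(c(N,\la)+\delta')d(x,0)}$ via a barrier argument, but that is an additional, nontrivial step not contained in your proposal.
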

\begin{proof}
	Performing straightforward calculations implies
	\begin{equation}
		\begin{aligned}
			I_{\la,a, f}\left(\tilde{\mathcal{U}}_{a, f}(x)+t w(\tau_{-y}(x))\right)&=\frac{1}{2}\left\|\tilde{\mathcal{U}}_{a, f}(x)+t w(\tau_{-y}(x))\right\|_{H_{\la}}^{2} \\
			&-\frac{1}{p+1} \int_{\mathbb{B}^{N}} a(x)\left(\tilde{\mathcal{U}}_{a, f}(x)+t w(\tau_{-y}(x))\right)^{p+1} \mathrm{~d} V_{\mathbb{B}^{N}}(x) \\
			&-\int_{\mathbb{B}^{N}} f(x)\left(\tilde{\mathcal{U}}_{a, f}(x)+t w(\tau_{-y}(x))\right) \mathrm{~d} V_{\mathbb{B}^{N}}(x) \\
			&=\frac{1}{2}\left\|\tilde{\mathcal{U}}_{a, f}(x)\right\|_{H_{\la}}^{2}+\frac{t^{2}}{2}\|w\|_{H^{1}\left(\mathbb{B}^{N}\right)}^{2}\\
			&+t\left\langle \tilde{\mathcal{U}}_{a, f}(x), w(\tau_{-y}(x))\right\rangle_{H_{\la}} \\
			&-\frac{1}{p+1} \int_{\mathbb{B}^{N}} a(x)\left(\tilde{\mathcal{U}}_{a, f}(x)\right)^{p+1} \mathrm{~d} V_{\mathbb{B}^{N}}(x)\\
			&-\frac{t^{p+1}}{p+1} \int_{\mathbb{B}^{N}}\; a(x)\; (w(\tau_{-y}(x))^{p+1} \mathrm{~d} V_{\mathbb{B}^{N}}(x) \\
			&-\frac{1}{p+1} \int_{\mathbb{B}^{N}} a(x)\left\{\left(\tilde{\mathcal{U}}_{a, f}(x)+t w(\tau_{-y}(x))\right)^{p+1}\right.\\
			&-\left. \left(\tilde{\mathcal{U}}_{a, f}(x)\right)^{p+1}- t^{p+1} w(\tau_{-y}(x))^{p+1}\right\} \mathrm{~d} V_{\mathbb{B}^{N}}(x) \\
			&-\int_{\mathbb{B}^{N}} f(x)\left(\tilde{\mathcal{U}}_{a, f}(x)+t w(\tau_{-y}(x))\right) \mathrm{~d} V_{\mathbb{B}^{N}}(x).
		\end{aligned} \label{4.yy}
	\end{equation}
	Now for all $h \in H^{1}\left(\mathbb{B}^{N}\right)$, we have
	\begin{align*}
		0&=I_{\la,a, f}^{\prime}\left(\tilde{\mathcal{U}}_{a, f}(x)\right)(h) \\
		&=\left\langle \tilde{\mathcal{U}}_{a, f}(x), h\right\rangle_{H_{\la}}-\int_{\mathbb{B}^{N}} a(x)\left(\tilde{\mathcal{U}}_{a, f}(x)\right)^{p} h \mathrm{~d} V_{\mathbb{B}^{N}}(x)-\int_{\mathbb{B}^{N}} f h \mathrm{~d} V_{\mathbb{B}^{N}}(x),
	\end{align*}
	i.e.,
	\begin{equation*}
		\left\langle \tilde{\mathcal{U}}_{a, f}(x), h\right\rangle_{H_{\la}}=\int_{\mathbb{B}^{N}} a(x)\left(\tilde{\mathcal{U}}_{a, f}(x)\right)^{p} h \mathrm{~d} V_{\mathbb{B}^{N}}(x)+\int_{\mathbb{B}^{N}} f h \mathrm{~d} V_{\mathbb{B}^{N}}(x).
	\end{equation*}
	In particular, for $h=t w(\tau_{-y}(x))$ in the above yields
	\begin{align*}
		&t\left\langle \tilde{\mathcal{U}}_{a, f}(x), w(\tau_{-y}(x))\right\rangle_{H_{\la}} \\
		&=t \int_{\mathbb{B}^{N}} a(x)\left(\tilde{\mathcal{U}}_{a, f}(x)\right)^{p} w(\tau_{-y}(x)) \mathrm{~d} V_{\mathbb{B}^{N}}(x)+t \int_{\mathbb{B}^{N}} 
		f w(\tau_{-y}(x)) \mathrm{~d} V_{\mathbb{B}^{N}}(x) .
	\end{align*}
	Hence utilizing the above equation and appropriately rearranging the terms in \eqref{4.yy} will result in
	\begin{equation*}
		\begin{aligned}
			&I_{\la,a, f}\left(\tilde{\mathcal{U}}_{a, f}(x)+t w(\tau_{-y}(x))\right)=I_{\la,a, f}\left(\tilde{\mathcal{U}}_{a, f}(x)\right)+I_{\la,1,0}(t w)\\
			&+\frac{t^{p+1}}{p+1} \int_{\mathbb{B}^{N}}(1-a(x)) w(\tau_{-y}(x))^{p+1}\mathrm{~d} V_{\mathbb{B}^{N}}(x)\\
			&-\frac{1}{p+1} \int_{\mathbb{B}^{N}} a(x)\left\{\left(\tilde{\mathcal{U}}_{a, f}(x)+t w(\tau_{-y}(x))\right)^{p+1}-\left(\tilde{\mathcal{U}}_{a, f}(x)\right)^{p+1}\right. \\
			&\left.-t(p+1)\left(\tilde{\mathcal{U}}_{a, f}(x)\right)^{p} w(\tau_{-y}(x))-t^{p+1} w(\tau_{-y}(x))^{p+1}\right\} \mathrm{~d} V_{\mathbb{B}^{N}}(x)\\
			&=I_{\la,a, f}\left(\tilde{\mathcal{U}}_{a, f}(x)\right)+I_{\la,1,0}(t w)\;+\; \underbrace{(I)-(II)}.
		\end{aligned}
	\end{equation*}
	where
	\begin{equation}
		I:=\frac{t^{p+1}}{p+1} \int_{\mathbb{B}^{N}}(1-a(x)) w(\tau_{-y}(x))^{p+1} \mathrm{~d} V_{\mathbb{B}^{N}}(x),
		\label{term1}
	\end{equation}
	and
	\begin{equation}
		\begin{aligned}
			II &:=\frac{1}{p+1} \int_{\mathbb{B}^{N}} a(x)\left\{\left(\tilde{\mathcal{U}}_{a, f}(x)+t w(\tau_{-y}(x))\right)^{p+1}-\left(\tilde{\mathcal{U}}_{a, f}(x)\right)^{p+1}\right.\\
			&\left.-t(p+1)\left(\tilde{\mathcal{U}}_{a, f}(x)\right)^{p} w(\tau_{-y}(x))-t^{p+1} w(\tau_{-y}(x))^{p+1}\right\}\mathrm{~d} V_{\mathbb{B}^{N}}(x).
		\end{aligned}
		\label{term2}
	\end{equation}
	
	To complete the proof of the proposition, we need to show that $(I) \; -\;(II) < 0,$ for suitably chosen $R >0.$
	
	\medskip

	Using the continuity, we easily get $$I_{\la,a, f}\left(\tilde{\mathcal{U}}_{a, f}(x)+t w(\tau_{-y}(x))\right) \rightarrow I_{\la,a, f}(\tilde{\mathcal{U}}_{a, f}(x))$$ as $t \rightarrow 0$.
	In addition, we also have
	\begin{equation*}
		I_{\la,a, f}\left(\tilde{\mathcal{U}}_{a, f}(x)+t w(\tau_{-y}(x))\right) \rightarrow-\infty \text{ as } t \rightarrow \infty.
	\end{equation*}
	Thus using the above two facts, we can find $m, M$ with $0<m<M$ such that
	\begin{equation*}
		I_{\la,a, f}\left(\tilde{\mathcal{U}}_{a, f}(x)+t w(\tau_{-y}(x)\right)<I_{\la,a, f}\left(\tilde{\mathcal{U}}_{a, f}(x)\right)+I_{\la,1,0}(w)\;\text { for all } t \in(0, m) \cup(M, \infty).
	\end{equation*}
	
	\medskip
	
	As a result, to prove the proposition at hand, it suffices to show \eqref{4.ab} for $t \in[m, M]$.
	Hence to finish the proof, we need to show $I<II$. To this end, let us recall the following standard $p$-th inequalities from calculus. 
	\begin{enumerate}
		\item $(s+t)^{p+1}-s^{p+1}-t^{p+1}-(p+1) s^{p} t \geq 0$ \text{for all }$(s, t) \in[0, \infty) \times[0, \infty)$.\\
		\item \text{For any} $ r > 0$ \text{we can find a constant} $A(r) > 0$ \text{such that}
		\begin{equation*}
			(s+t)^{p+1}-s^{p+1}-t^{p+1}-(p+1) s^{p} t \geq A(r) t^{2},
		\end{equation*}
		for all $(s, t) \in[r, \infty) \times[0, \infty)$.
	\end{enumerate}

	We can estimate $II$ with the help of the above inequality as follows:
	\medskip
	
	Set $A:=A(r):=A\left(\min _{d(x,0) \leq 1} \tilde{\mathcal{U}}_{a, f}(x)\right)>$ 0, then
	\begin{equation*}
		\begin{aligned}
			II &:=\frac{1}{p+1} \int_{\bn} a(x)\left\{\left(\tilde{\mathcal{U}}_{a, f}(x)+t w(\tau_{-y}(x))\right)^{p+1}-\left(\tilde{\mathcal{U}}_{a, f}(x)\right)^{p+1}\right.\\
			&\left.-t(p+1)\left(\tilde{\mathcal{U}}_{a, f}(x)\right)^{p} w(\tau_{-y}(x))-t^{p+1} w(\tau_{-y}(x))^{p+1}\right\}\mathrm{~d} V_{\mathbb{B}^{N}}(x)\\
			& \geq \frac{1}{p+1} \int_{d(x,0) \leq 1} a(x) A(r) t^{2} w^2(\tau_{-y}(x)) \mathrm{~d} V_{\mathbb{B}^{N}}(x) \\
			&\geq \frac{m^{2} \underline{\mathrm{a}} A(r)}{p+1} \underbrace{\int_{d(x,0) \leq 1} w^{2}(\tau_{-y}(x)) \mathrm{~d} V_{\mathbb{B}^{N}}(x)}_{E_1} \\
		\end{aligned}
	\end{equation*}

	{\bf Estimate of $E_1:$} We shall estimate $E_1$ in the domain $d(x, 0) \leq 1.$ Using traingle inequality we have 
	
	$$
	1 - \frac{d(x, 0)}{d(y, 0)} \leq \frac{d(x, y)}{d(y, 0)} \leq  1 +  \frac{d(x, 0)}{d(y, 0)}.
	$$
	Since, $d(x, 0) \leq 1,$ there exist $R > 0$ and $\varepsilon_{R} > 0$ such that whenver $d(y, 0) > R,$ there holds

	$$
	1 - \varepsilon_{R} \leq \frac{d(x, y)}{d(y, 0)} \leq 1 + \varepsilon_{R},
	$$
	where $\varepsilon_R \rightarrow 0$ as $R \rightarrow \infty.$ Thus using above and \eqref{esti-limit} we conclude for any $\varepsilon > 0,$
	
	\begin{align*}
		E_1 &:= \int_{d(x, 0) \leq 1} w^{2}(\tau_{-y}(x)) \mathrm{~d} V_{\mathbb{B}^{N}}(x) \geq C_{\varepsilon} \int_{d(x, 0) \leq 1} 
		e^{-2(c(N, \lambda) + \varepsilon) d(x, y)} \;  \mathrm{~d} V_{\mathbb{B}^{N}}(x) \\
		& \geq C_{\varepsilon} \; e^{-2(c(N, \lambda) + \varepsilon)(1 + \varepsilon_{R}) d(y, 0)} \underbrace{\int_{d(x, 0) \leq 1} \mathrm{~d} V_{\mathbb{B}^{N}}(x)}_{:=C} \\
		& = \tilde{C_{\varepsilon}} \; e^{-2(c(N, \lambda) + \varepsilon)(1 + \varepsilon_{R}) d(y, 0)}.
	\end{align*}
	
	Therefore we have 
	\begin{equation}
		I I \geq \frac{\tilde{C_{\varepsilon}} m^{2} \underline{\mathrm{a}} A(r)}{p+1} \; e^{-2(c(N, \lambda) + \varepsilon)(1 + \varepsilon_{R}) d(y, 0)}. \label{4.rr}
	\end{equation}
	
	\medskip 
	
	{\bf Estimate of I :} Let us now compute an estimate on $I$ for $\delta > c(n, \lambda)(p+1)+(N-1)$, 
	then for every $\varepsilon^{\prime} >0,\\ \delta >(c(n, \lambda)- \varepsilon^{\prime})(p+1)+(N-1).$ We shall estimate $I$ as follows:
	\begin{equation}
		\begin{aligned}
			I &=\frac{t^{p+1}}{p+1} \int_{\mathbb{B}^{N}}(1-a(x)) w(\tau_{-y}(x))^{p+1} \mathrm{~d} V_{\mathbb{B}^{N}}(x)  \\
			& \leq  C_{\varepsilon^{\prime}}\frac{t^{p+1}}{p+1} \int_{\mathbb{B}^{N}} \; (1 - a(x)) e^{-(c(n, \lambda)-\varepsilon^{\prime})(p+1)d(x,y)} \mathrm{~d} V_{\mathbb{B}^{N}}(x) \\
			& \leq  C_{\varepsilon^{\prime}}\frac{t^{p+1}}{p+1} \int_{\mathbb{B}^{N}}e^{-\delta d(x,0)} e^{(c(n, \lambda)-\varepsilon^{\prime})(p+1)(d(x,0)- d(y,0))} \mathrm{~d} V_{\mathbb{B}^{N}}(x) \\
			& \leq  C_{\varepsilon^{\prime}}\frac{t^{p+1}}{p+1} e^{-(c(n, \lambda)-\varepsilon^{\prime})(p+1)d(y,0)} \int_{\mathbb{B}^{N}}e^{-\delta d(x,0)+(c(n, \lambda)-\varepsilon^{\prime})(p+1)d(x,0)} \mathrm{~d} V_{\mathbb{B}^{N}}(x) \\
			& \leq C_{\varepsilon^{\prime}}\frac{t^{p+1}}{p+1} e^{-(c(n, \lambda)-\varepsilon^{\prime})(p+1)d(y,0)} \int_{0}^{\infty}e^{-\delta r+(c(n, \lambda)-\varepsilon^{\prime})(p+1)r + (N-1)r} \mathrm{~d}r \\
			& \leq C_{\varepsilon^{\prime}}\frac{M^{p+1}}{p+1} e^{-(c(n, \lambda)-\varepsilon^{\prime})(p+1)d(y,0)}.\\
		\end{aligned}\label{4p}
	\end{equation}
	
	Thus we have deduced 
	\begin{equation}
		I \leq C_{\varepsilon^{\prime}}\frac{M^{p+1}}{p+1} e^{-(c(n, \lambda)-\varepsilon^{\prime})(p+1)d(y,0)}. \label{4.oo}
	\end{equation}
	Now applying\eqref{4.rr} and \eqref{4.oo}, we can choose $R_{0}>R>0$ large enough and also choose ${\varepsilon}$ and $\varepsilon^{\prime}$ appropriately such that
	\begin{equation*}
		(I)<(I I) \text { for }d(y,0) \geq R_{0}.
	\end{equation*}
	As a result, \eqref{4.ab} is proved. This completes the proof \eqref{4.ab}.  Now the proof of \eqref{energy2} can be concluded in a similar line  by noting that $(I)$ is zero and 
	$\underline{a} =1.$

\end{proof}

\section{Proof of Theorem \ref{mainthm1} and Theorem~\ref{mainthm3}}\label{Secpf1}

\subsection{Existence of the first solution of \eqref{4.aa} for $a(x)$ satisfying  $\left(\mathbf{A}_{1}\right)$ or $\left(\mathbf{A}_{3}\right)$} The below-mentioned proposition helps us establish the existence of the first positive solution in the neighbourhood of 0.

\begin{prop}\label{prop4.aa}
	For $d_{0}$ as chosen in Lemma \ref{lem4.aa} and $a(x)$ satisfying  $\left(\mathbf{A}_{1}\right)$ or $\left(\mathbf{A}_{3}\right)$, there exists $r_{1}>0$ and $d_{1}\in (0, d_0]$ such that 
	\begin{enumerate}[label=(\roman*)] 
		\item $I_{\la,a, f}(u)$ is strictly convex in $B\left(r_{1}\right)=\left\{u \in H^{1}\left(\mathbb{B}^{N}\right)\right.$ : $\left.\|u\|_{H_{\la}}<r_{1}\right\}$.
		\item If $\|f\|_{H^{-1}\left(\mathbb{B}^{N}\right)} \leq d_{1}$, then
		\begin{equation*}
			\inf _{\|u\|_{H_{\la}}=r_{1}} I_{\la,a, f}(u)>0 .
		\end{equation*}
	\end{enumerate}
	Moreover, there exists a unique critical point $\mathcal{U}_{a, f}(x)$ of $I_{\la,a, f}(u)$ in $B\left(r_{1}\right)$. Furthermore, $\mathcal{U}_{a, f}(x)$  satisfies 
	\begin{equation*}
		\mathcal{U}_{a, f}(x) \in B\left(r_{1}\right) \text { and } I_{\la,a, f}\left(\mathcal{U}_{a, f}(x)\right)=\inf _{u \in B\left(r_{1}\right)} I_{\la,a, f}(u).
	\end{equation*}
	
\end{prop}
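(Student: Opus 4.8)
The plan is to read off everything from the second variation of $I_{\la,a,f}$ and to let strict convexity, rather than compactness, drive the existence of a minimiser---this is essential, since the embedding $H^1(\bn)\hookrightarrow L^{p+1}(\bn)$ is continuous but \emph{not} compact on the hyperbolic space. First I would compute, for $u,h\in H^1(\bn)$,
\[
I_{\la,a,f}''(u)(h,h)=\|h\|_{H_\la}^2-p\int_{\bn}a(x)\,u_+^{p-1}h^2\,\vnn .
\]
Estimating $a\le\|a\|_{L^\infty(\bn)}$, applying Hölder with exponents $\tfrac{p+1}{p-1}$ and $\tfrac{p+1}{2}$, and then the sharp Poincaré--Sobolev inequality in the form $\bigl(\int_{\bn}|v|^{p+1}\,\vnn\bigr)^{2/(p+1)}\le S_{1,\la}^{-1}\|v\|_{H_\la}^2$, gives
\[
p\int_{\bn}a\,u_+^{p-1}h^2\,\vnn\le p\|a\|_{L^\infty(\bn)}\,S_{1,\la}^{-\frac{p+1}{2}}\,\|u\|_{H_\la}^{p-1}\,\|h\|_{H_\la}^2 ,
\]
so that $I_{\la,a,f}''(u)(h,h)\ge\bigl(1-p\|a\|_{L^\infty(\bn)}S_{1,\la}^{-\frac{p+1}{2}}\|u\|_{H_\la}^{p-1}\bigr)\|h\|_{H_\la}^2$. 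This is strictly positive for all $h\neq0$ as soon as $\|u\|_{H_\la}<\rho_0:=\bigl(p\|a\|_{L^\infty(\bn)}\bigr)^{-\frac{1}{p-1}}S_{1,\la}^{\frac{p+1}{2(p-1)}}$; choosing any $r_1\in(0,\rho_0)$ yields the strict convexity in (i).

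For (ii) I would bound, on $\|u\|_{H_\la}=r_1$,
\[
I_{\la,a,f}(u)\ge\tfrac12 r_1^2-\tfrac{\|a\|_{L^\infty(\bn)}}{p+1}S_{1,\la}^{-\frac{p+1}{2}}r_1^{p+1}-\|f\|_{H^{-1}(\bn)}\,r_1 ,
\]
using the same Sobolev inequality for the nonlinear term and duality for the $f$-term. Since $p>1$, after possibly shrinking $r_1$ the quantity $c_0:=\tfrac12 r_1^2-\tfrac{\|a\|_{L^\infty(\bn)}}{p+1}S_{1,\la}^{-\frac{p+1}{2}}r_1^{p+1}$ is strictly positive; then setting $d_1:=\min\{d_0,\,c_0/(2r_1)\}$ forces $\inf_{\|u\|_{H_\la}=r_1}I_{\la,a,f}\ge c_0/2>0$ whenever $\|f\|_{H^{-1}(\bn)}\le d_1$, which is (ii).

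For the last assertion I would minimise $I_{\la,a,f}$ over the closed, bounded, convex (hence weakly compact) set $\overline{B(r_1)}$. Here lies the main point: a minimising sequence converges only weakly and the embedding into $L^{p+1}$ is not compact, so the usual lower-semicontinuity of the nonlinear term is unavailable; however, $I_{\la,a,f}$ is convex and strongly continuous on $\overline{B(r_1)}$, and a convex strongly continuous functional is automatically weakly sequentially lower semicontinuous, which gives $I_{\la,a,f}(u^*)\le\liminf_n I_{\la,a,f}(u_n)$ whenever $u_n\rightharpoonup u^*$. Thus a minimiser $\mathcal U_{a,f}:=u^*\in\overline{B(r_1)}$ exists. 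Since $I_{\la,a,f}(0)=0$ while $I_{\la,a,f}\ge c_0/2>0$ on $\partial B(r_1)$ by (ii), the minimum value is $\le0$ and cannot be attained on the boundary, so $\mathcal U_{a,f}\in B(r_1)$ is interior and therefore satisfies $I_{\la,a,f}'(\mathcal U_{a,f})=0$. Finally, the strict convexity on $B(r_1)$ from (i) guarantees that this is the \emph{unique} critical point of $I_{\la,a,f}$ in $B(r_1)$, completing the proof. The delicate step is exactly this convexity-driven weak lower semicontinuity, which substitutes for the missing compactness.
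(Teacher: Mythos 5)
Your proposal is correct and follows essentially the same route as the paper: the identical second-variation computation combined with H\"older and the Poincar\'e--Sobolev inequality gives strict convexity on a ball of radius comparable to $p^{-1/(p-1)}S_{1,\lambda}^{(p+1)/(2(p-1))}$, the same boundary estimate yields (ii), and the unique critical point is obtained from strict convexity together with $\inf_{\|u\|_{H_{\lambda}}=r_1} I_{\lambda,a,f}(u)>0=I_{\lambda,a,f}(0)$. The only difference is one of detail, not of method: the paper asserts attainment of the infimum directly from these two facts, whereas you spell out the underlying mechanism (weak compactness of the closed convex ball plus weak sequential lower semicontinuity of convex, strongly continuous functionals via Mazur's lemma), which fills in a step the paper leaves implicit.
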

\begin{proof}
	We proceed to prove part $(i)$ as follows:
	\begin{equation}
		I_{\la, a, f}^{\prime \prime}(u)(h, h)=\|h\|_{H_{\la}}^{2}-p \int_{\mathbb{B}^{N}} a(x) u_{+}^{p-1} h^{2} \mathrm{~d} V_{\mathbb{B}^{N}}(x).
		\label{4.ee}
	\end{equation}
	Applying Hölder inequality, Sobolev inequality and the fact that $a \leq 1$ or $a \equiv 1$, we get an estimate on the second term of RHS of \eqref{4.ee} as follows
	\begin{equation*}
		\begin{aligned}
			\int_{\mathbb{B}^{N}} a(x) u_{+}^{p-1} h^{2} \mathrm{~d} V_{\mathbb{B}^{N}}(x) &\leq\left(\int_{\mathbb{B}^{N}}|u|^{p+1} \mathrm{~d} V_{\mathbb{B}^{N}}\right)^{\frac{p-1}{p+1}}\left(\int_{\mathbb{B}^{N}}|h|^{p+1} \mathrm{~d} V_{\mathbb{B}^{N}}\right)^{\frac{2}{p+1}} \\
			& \leq S_{1,\la}^{-\frac{p-1}{2}} S_{1,\la}^{-1}\|u\|_{H_{\la}}^{p-1}\|h\|_{H_{\la}}^{2} \\
			&=S_{1,\la}^{-\frac{p+1}{2}}\|u\|_{H_{\la}}^{p-1}\|h\|_{H_{\la}}^{2}.
		\end{aligned}
	\end{equation*}
	Thus using this above estimate in \eqref{4.ee} yields
	\begin{equation*}
		I_{\la,a, f}^{\prime \prime}(u)(h, h) \geq\left(1-p S_{1,\la}^{-\frac{p+1}{2}}\|u\|_{H_{\la}}^{p-1}\right)\|h\|_{H_{\la}}^{2} .
	\end{equation*}
	Defining $r_{1}=p^{-\frac{1}{p-1}} S_{1,\la}^{\frac{p+1}{2(p-1)}}$ results in $I_{\la,a, f}^{\prime \prime}(u)$ being positive definite for $u \in B\left(r_{1}\right)$. Therefore, $I_{\la,a, f}(u)$ is strictly convex in $B\left(r_{1}\right)$. With this, we are done with the proof of part $(i)$.
	\medskip
	
	$(ii)$ Assuming $\|u\|_{H_{\la}}=r_{1}$ gives
	\begin{equation*}
		\begin{aligned}
			I_{\la,a, f}(u)& =\frac{1}{2}\|u\|_{H_{\la}}^{2}-\frac{1}{p+1} \int_{\mathbb{B}^{N}} a(x) u_{+}^{p+1} \mathrm{~d} V_{\mathbb{B}^{N}}(x)-\langle f, u\rangle  \geq \frac{1}{2} r_{1}^{2}-\frac{1}{p+1} S_{1,\la}^{-\frac{p+1}{2}} r_{1}^{p+1} \\ &-r_{1}\|f\|_{H^{-1}\left(\mathbb{B}^{N}\right)} \\
			&=\left(\frac{1}{2}-\frac{1}{p+1} S_{1,\la}^{-\frac{p+1}{2}} r_{1}^{p-1}\right) r_{1}^{2}-r_{1}\|f\|_{H^{-1}\left(\mathbb{B}^{N}\right)}.
		\end{aligned}
	\end{equation*}
	Further, 
	\begin{equation*}
		I_{\la, a, f}(u) \geq\left(\frac{1}{2}-\frac{1}{p(p+1)}\right) r_{1}^{2}-r_{1}\|f\|_{H^{-1}\left(\mathbb{B}^{N}\right)},
	\end{equation*}
	where we have used $r_{1}^{p-1}=\frac{1}{p} S_{1,\la}^{\frac{p+1}{2}}$.\\
	Thus there exists $d_{1} \in\left(0, d_{0}\right]$ such that 
	\begin{equation*}
		\inf _{\|u\|_{H_{\la}}=r_{1}} I_{\la, a, f}(u)>0 \quad \text { for }\|f\|_{H^{-1}\left(\mathbb{B}^{N}\right)} \leq d_{1}.
	\end{equation*}
	Moreover, there exists a unique critical point $\mathcal{U}_{a, f}(x)$ of $I_{\la,a, f}(u)$ in $B\left(r_{1}\right)$ because $I_{\la,a, f}(u)$ is strictly convex in $B\left(r_{1}\right)$ and $\inf _{\|u\|_{H_{\la}}=r_{1}} I_{\la,a, f}(u)>0=I_{\la,a, f}(0)$. Furthermore, this critical point satisfies 
	$$
	I_{\la,a, f}\left(\mathcal{U}_{a, f} (x)\right)=\inf _{\|u\|_{H_{\la}}<r_{1}} I_{\la,a, f}(u).
	$$
	This completes the proof of the proposition.\\
\end{proof}


\subsection{ The case $a(x) \leq 1,$ $\mu\{ x : a(x) \neq 1 \} >0:$ Existence of second and third solutions.} We now aim to prove the existence of the second and third positive solutions. To fulfil this aim, we will utilize the Lusternik-Schnirelman Category theory, a careful investigation of Palais-Smale characterization, and energy estimates involving \it hyperbolic bubbles \rm
to prove the multiplicity result. The following notation will be used to define level sets in the subsequent sections.
\begin{equation*}
	\left[J_{\la,a, f} \leq c\right]=\left\{v \in \tilde{\Sigma}_{+} \mid J_{\la,a, f}(v) \leq c\right\}
\end{equation*}
for $c \in \mathbb{R}$. To compute the critical points of $J_{\la,a, f}(v)$, we will show for a sufficiently small $\varepsilon>0$,
\begin{equation*}
	\operatorname{cat}\left(\left[J_{\la,a, f} \leq I_{\la,a, f}\left(\mathcal{U}_{a, f} (x)\right)+I_{\la,1,0}(w)-\varepsilon\right]\right) \geq 2
\end{equation*}
where cat denotes Lusternik-Schnirelman Category. 

We now study the properties of the functional $J_{\la,a, 0}$ under the condition $\mathbf{A}_{1}$.
\begin{lem}\label{lem4.dd}
	Assume $a$ satisfies $0<a \in L^{\infty}\left(\mathbb{B}^{N}\right), a(x) \rightarrow 1$ as $d(x,0) \rightarrow \infty$, \eqref{acond} and $\mathbf{A}_{1}$. Then there holds
	\begin{enumerate}[label=(\roman*)]
		\item $\inf _{v \in \tilde{\Sigma}_{+}} J_{\la,a, 0}(v)=I_{\la,1,0}(w)$.
		\item $\inf _{v \in \tilde{\Sigma}_{+}} J_{\la,a, 0}(v)$ is not attained.
		\item $J_{\la,a, 0}(v)$ satisfies $(P S)_{c}$ for $c \in\left(-\infty, I_{\la,1,0}(w)\right) \cup\left(I_{\la,1,0}(w), 2 I_{\la,1,0}(w)\right)$.
	\end{enumerate}
\end{lem}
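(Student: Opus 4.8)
The plan is to establish the three assertions in order, using the value found in (i) as the engine for both (ii) and (iii).

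For (i), the lower bound is read off directly from the sandwich estimate \eqref{4.dd}: under $\mathbf{A}_1$ we have $\bar a=\sup_{\bn}a=1$, so its left half gives $I_{\la,1,0}(w)=\bar a^{-\frac{2}{p-1}}I_{\la,1,0}(w)\le \inf_{v\in\tilde\Sigma_+}J_{\la,a,0}(v)$. For the matching upper bound I would test $J_{\la,a,0}$ against translated bubbles. Put $w_y:=w(\tau_{-y}(\cdot))$ and $v_y:=w_y/\|w_y\|_{H_{\la}}$; since $\tau_{-y}$ is an isometry of $\bn$ it preserves both the Dirichlet form and the volume, so $\|w_y\|_{H_{\la}}=\|w\|_{H_{\la}}$ and $v_y\in\tilde\Sigma_+$ (as $w>0$). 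Using the closed form \eqref{4.cc} and the substitution $z=\tau_{-y}(x)$ one gets $J_{\la,a,0}(v_y)=\left(\tfrac12-\tfrac1{p+1}\right)\left(\|w\|_{H_{\la}}^{-(p+1)}\int_{\bn}a(\tau_y(z))\,w(z)^{p+1}\,\vnn(z)\right)^{-\frac{2}{p-1}}$. As $d(y,0)\to\infty$ we have $a(\tau_y(z))\to1$ pointwise, and $a\le1$ bounds the integrand by $w^{p+1}\in L^1(\bn)$; here I would record that $w\in L^{p+1}(\bn)$ follows from the decay \eqref{esti-limit} together with $p+1>2>(N-1)/c(N,\la)$ (recall $c(N,\la)>\tfrac{N-1}{2}$), which places the integral in the convergent regime. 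Dominated convergence then yields $J_{\la,a,0}(v_y)\to I_{\la,1,0}(w)$, so $\inf J_{\la,a,0}\le I_{\la,1,0}(w)$, proving (i).

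For (ii) I would argue by contradiction. If the infimum were attained at some $v^\ast\in\tilde\Sigma_+$, then $v^\ast$ is a critical point of $J_{\la,a,0}$, so by Proposition \ref{prop4.bb}(ii) (with $f=0$) $t_{a,0}(v^\ast)v^\ast$ is a critical point of $I_{\la,a,0}$, i.e.\ a weak solution of $-\Delta_{\bn}u-\la u=a\,u_+^{p}$, and the maximum principle (Remark \ref{rmk1.1}) forces $v^\ast>0$ a.e. On the other hand $a\le1$ gives $J_{\la,a,0}(v^\ast)\ge J_{\la,1,0}(v^\ast)\ge \inf J_{\la,1,0}=I_{\la,1,0}(w)=J_{\la,a,0}(v^\ast)$, the middle infimum being the $a\equiv1$ specialization of \eqref{4.dd}. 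Equality in the first step means $\int_{\bn}(1-a)(v^\ast_+)^{p+1}\,\vn=0$; but $1-a\ge0$, $v^\ast_+=v^\ast>0$ a.e., and $\mu(\{a\neq1\})>0$ by $\mathbf{A}_1$, so the integral is strictly positive — a contradiction.

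For (iii) I would localize the non-compactness levels. If $c<I_{\la,1,0}(w)$, parts (i)–(ii) show every value of $J_{\la,a,0}$ strictly exceeds $I_{\la,1,0}(w)$, so no sequence can satisfy $J_{\la,a,0}(v_j)\to c$ and $(PS)_c$ holds vacuously. If $c\in\bigl(I_{\la,1,0}(w),2I_{\la,1,0}(w)\bigr)$ I would invoke the Palais–Smale decomposition for $J_{\la,a,0}$ (the $f\equiv0$ case, established in \cite{DG}; cf.\ Proposition \ref{prop4.cc}): compactness can fail only at $c=I_{\la,a,0}(u_0)+\ell\,I_{\la,1,0}(w)$ with $\ell\ge1$ and $u_0$ a critical point of $I_{\la,a,0}$. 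By Proposition \ref{prop4.bb}(iii) any such critical value is either $0$ (for $u_0=0$) or equals $J_{\la,a,0}(v)\ge\inf J_{\la,a,0}=I_{\la,1,0}(w)$ (for $u_0\neq0$); hence the breaking levels are $\ell\,I_{\la,1,0}(w)$ (for $u_0=0$) or at least $2I_{\la,1,0}(w)$ (for $u_0\neq0$), none of which meets the open interval. Thus $(PS)_c$ holds there as well.

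The step I expect to be the main obstacle is the upper bound in (i): the whole argument turns on the integrability $w\in L^{p+1}(\bn)$ and on passing to the limit under the integral along the translates $\tau_y$, which is exactly the delicate point flagged in the introduction, where the ground state fails to lie in $L^q(\bn)$ for $q\in[1,2)$; it is precisely the strict inequality $p+1>2$ that rescues the dominated-convergence step. Everything else reduces to \eqref{4.dd}, the maximum principle, and bookkeeping of the breaking levels furnished by the decomposition.
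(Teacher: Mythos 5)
Your proof is correct. Parts (i) and (iii) essentially follow the paper's own route: for (i) the paper likewise tests with translated bubbles ($w_l=w(\tau_{le}(\cdot))$, $l\to1$) and passes to the limit in $t_{a,0}(\bar w_l)$ via \eqref{4.cc} --- your explicit check that $w\in L^{p+1}(\bn)$, from \eqref{esti-limit} and $(N-1)/c(N,\la)<2<p+1$, is exactly the integrability point the paper leaves implicit --- and for (iii) the paper only says the claim ``follows from the Palais--Smale decomposition'', which your bookkeeping of the breaking levels $I_{\la,a,0}(u_0)+\ell I_{\la,1,0}(w)$ spells out (compare Corollary \ref{cor1.8}).

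Where you genuinely differ is (ii). The paper never uses that the minimizer is a critical point of $J_{\la,a,0}$: it introduces the Nehari manifold $\mathcal{N}$ of the limit functional $I_{\la,1,0}$, picks $t_{v_0}$ with $t_{v_0}v_0\in\mathcal{N}$, and runs the chain $I_{\la,1,0}(w)=J_{\la,a,0}(v_0)\ge I_{\la,a,0}(t_{v_0}v_0)=I_{\la,1,0}(t_{v_0}v_0)+\frac{t_{v_0}^{p+1}}{p+1}\int_{\bn}(1-a)(v_0)_+^{p+1}\vnn\ge I_{\la,1,0}(w)+\cdots$, so that the equality case makes $t_{v_0}v_0$ a Nehari minimizer of $I_{\la,1,0}$, hence (Lagrange multipliers plus the maximum principle) strictly positive, contradicting $(v_0)_+\equiv 0$ on $\{a\neq 1\}$. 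You instead obtain positivity upstream: an interior minimizer of $J_{\la,a,0}$ on the open set $\tilde\Sigma_+$ is a critical point, Proposition \ref{prop4.bb}(ii) with $f=0$ converts it into a critical point of $I_{\la,a,0}$, the maximum principle gives $v^\ast>0$ a.e., and then \eqref{4.cc} with $a\le1$ forces $\int_{\bn}(1-a)(v^\ast)^{p+1}\vnn=0$, a contradiction. Both arguments are sound; yours is shorter but leans on Proposition \ref{prop4.bb} (whose proof the paper omits, citing \cite{Adachi}) and on the manifold structure of $\tilde\Sigma_+$, whereas the paper's version needs neither, working purely with the displayed inequalities and the standard Nehari/Lagrange argument for the unperturbed problem. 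One small point to patch in your write-up: Remark \ref{rmk1.1} is phrased for non-negative $f$, and for $f\equiv0$ the strong maximum principle yields $u>0$ only after noting $u=t_{a,0}(v^\ast)v^\ast\not\equiv0$, which holds because $\|v^\ast\|_{H_\la}=1$.
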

\begin{proof}
	Using \eqref{4.dd} and $\mathbf{A}_{\mathbf{1}}$, we immediately get
	\begin{equation*}
		\inf _{v \in \tilde{\Sigma}_{+}} J_{\la,a, 0}(v) \geq I_{\la,1,0}(w) .
	\end{equation*}
	Now define $w_{l}(x)=w(\tau_{le}(x))$ for a unit vector $e$ in $\mathbb{R}^{N}$ and $0<l<1$ so that $le \in \bn$. Moreover, $l \rightarrow \infty$ in the disc model of the hyperbolic space means $l \rightarrow 1$. Applying Lemma \ref{lem4.cc}, corresponding to $\bar{w}_{l}=\frac{w_{l}}{\left\|w_{l}\right\|} \in \tilde{\Sigma}_{+}$ implies the existence of a unique $t_{a, 0}\left(\bar{w}_{l}\right)$ such that
	\begin{equation*}
		J_{\la,a, 0}\left(\frac{w_{l}}{\left\|w_{l}\right\|}\right)=I_{\la,a, 0}\left(t_{a, 0}\left(\bar{w}_{l}\right) \frac{w_{l}}{\left\|w_{l}\right\|}\right) .
	\end{equation*}
	Let us now determine the RHS of the above equation
	\begin{equation*}
		I_{\la,a, 0}\left(t_{a, 0}\left(\bar{w}_{l}\right) \frac{w_{l}}{\left\|w_{l}\right\|}\right)=\frac{t_{a, 0}^{2}\left(\bar{w}_{l}\right)}{2}\left\|\bar{w}_{l}\right\|_{H_{\la}}^{2}-\frac{t_{a, 0}^{p+1}\left(\bar{w}_{l}\right)}{p+1} \int_{\mathbb{B}^{N}} a(x)\left(\bar{w}_{l}\right)^{p+1} \mathrm{~d} V_{\mathbb{B}^{N}}(x).
	\end{equation*}
	Also, $t_{a, 0}\left(\bar{w}_{l}\right)$ can be expressed in an explicit form that occurs in the proof of Lemma \ref{lem4.cc} which is given by
	\begin{equation*}
		t_{a, 0}\left(\bar{w}_{l}\right)=\left(\int_{\mathbb{B}^{N}} a(x) \bar{w}_{l}^{p+1} \mathrm{~d} V_{\mathbb{B}^{N}}(x)\right)^{-\frac{1}{p-1}} \stackrel{l \rightarrow \infty}{\longrightarrow}\left(\frac{\|w\|_{H_{\la}}}{\|w\|_{L^{p+1}\left(\mathbb{B}^{N}\right)}}\right)^{\frac{p+1}{p-1}}.
	\end{equation*}
	Since $w$ is the unique radial solution of \eqref{3d}, we further get
	\begin{equation*}
		\begin{aligned}
			J_{\la,a, 0}\left(\bar{w}_{l}\right) \stackrel{l \rightarrow \infty}{\longrightarrow} & \frac{1}{2}\left\{\frac{\|w\|_{H_{\la}}}{\|w\|_{L^{p+1}\left(\mathbb{B}^{N}\right)}}\right\}^{\frac{2(p+1)}{(p-1)}}-\frac{1}{p+1}\left(\left\{\frac{\|w\|_{H_{\la}}}{\|w\|_{L^{p+1}\left(\mathbb{B}^{N}\right)}}\right\}^{\frac{(p+1)^{2}}{(p-1)}} \times \frac{\|w\|_{L^{p+1}\left(\mathbb{B}^{N}\right)}^{p+1}}{\|w\|_{H_{\la}}^{p+1}}\right) \\
			&=\left(\frac{1}{2}-\frac{1}{p+1}\right)\|w\|_{L^{p+1}\left(\mathbb{B}^{N}\right)}^{p+1}=I_{\la,1,0}(w).
		\end{aligned}
	\end{equation*}
	Hence $(i)$ follows.\\
	We will now show $(ii)$ by contradiction, i.e., let us assume that there exists $v_{0} \in \tilde{\Sigma}_{+}$ such that $J_{\la,a, 0}\left(v_{0}\right)=\inf _{v \in \tilde{\Sigma}_{+}} J_{\la,a, 0}(v)=I_{\la,1,0}\left(w\right)$. Define, the Nehari manifold $\mathcal{N}$ as
	\begin{equation*}
		\mathcal{N}:=\left\{u \in H^{1}\left(\mathbb{B}^{N}\right):\left(I_{\la,
			1,0}\right)^{\prime}(u)(u)=0\right\} .
	\end{equation*}
	It is not difficult to find a $t_{v_{0}}>0$ such that $t_{v_{0}} v_{0} \in \mathcal{N}$. Further, note that that for any $v \in \mathcal{N}$, we have $\|v\|_{H_{\la}}^{2}=\int_{\mathbb{B}^{N}}(v)_{+}^{p+1} \mathrm{~d} V_{\mathbb{B}^{N}}$, and consequently,
	\begin{equation*}
		I_{\la,1,0}(v)=\frac{p-1}{2(p+1)}\|v\|_{H_{\la}}^{2}=\frac{p-1}{2(p+1)} \int_{\mathbb{B}^{N}}(v)_{+}^{p+1} \mathrm{~d} V_{\mathbb{B}^{N}}\geq \frac{p-1}{2(p+1)} S_{1,\la}^{\frac{p+1}{p-1}},
	\end{equation*}
	where $S_{1,\la}$ is as defined in \eqref{3c}. Thus $I_{\la,1,0}(v) \geq I_{\la,1,0}\left(w\right)$ for all $v \in \mathcal{N}$. Moreover, $w \in \mathcal{N}$, and hence
	\begin{equation*}
		\inf _{v \in \mathcal{N}} I_{\la,1,0}(v)=I_{\la,1,0}\left(w\right) .
	\end{equation*}
	Therefore,
	\begin{equation}
		\begin{aligned}
			I_{\la,1,0}\left(w\right)=J_{\la,a, 0}\left(v_{0}\right) :=&\max _{t>0} I_{\la,a, 0}\left(t v_{0}\right) \geq I_{\la,a, 0}\left(t_{v_{0}} v_{0}\right) \\
			=&\frac{t_{v_{0}}^{2}}{2}\left\|v_{0}\right\|_{H_{\la}}^{2}-\frac{t_{v_{0}}^{p+1}}{p+1} \int_{\mathbb{B}^{N}} a(x)\left(v_{0}\right)_{+}^{p+1} \mathrm{~d} V_{\mathbb{B}^{N}}(x) \\
			=&\frac{t_{v_{0}}^{2}}{2}\left\|v_{0}\right\|_{H_{\la}}^{2}-\frac{t_{v_{0}}^{p+1}}{p+1} \int_{\mathbb{B}^{N}}\left(v_{0}\right)_{+}^{p+1} \mathrm{~d} V_{\mathbb{B}^{N}}(x) \\
			&+\frac{t_{v_{0}}^{p+1}}{p+1} \int_{\mathbb{B}^{N}}(1-a(x))\left(v_{0}\right)_{+}^{p+1} \mathrm{~d} V_{\mathbb{B}^{N}}(x) \\
			=&I_{\la,1,0}\left(t_{v_{0}} v_{0}\right) +\frac{t_{v_{0}}^{p+1}}{p+1} \int_{\mathbb{B}^{N}}(1-a(x))\left(v_{0}\right)_{+}^{p+1} \mathrm{~d} V_{\mathbb{B}^{N}}(x)\\
			\geq & I_{\la,1,0}\left(w\right)+\frac{t_{v_{0}}^{p+1}}{p+1} \int_{\mathbb{B}^{N}}(1-a(x))\left(v_{0}\right)_{+}^{p+1} \mathrm{~d} V_{\mathbb{B}^{N}}(x) .
		\end{aligned}
		\label{eq4mm}
	\end{equation}
	Thus the above inequality and $\mathbf{A}_{\mathbf{1}}$ result in
	\begin{equation}
		\frac{t_{v_{0}}^{p+1}}{p+1} \int_{\mathbb{B}^{N}}(1-a(x))\left(v_{0}\right)_{+}^{p+1} \mathrm{~d} V_{\mathbb{B}^{N}}(x)=0. 
		\label{eq4vv}
	\end{equation}
	Thus
	\begin{equation}
		\left(v_{0}\right)_{+} \equiv 0 \; \text { in }\left\{x \in \mathbb{B}^{N}: a(x) \neq 1\right\}. \label{eqrr}
	\end{equation}
	Moreover, the inequality in \eqref{eq4mm} becomes an equality by substituting \eqref{eq4vv} into \eqref{eq4mm}. Therefore,
	\begin{equation*}
		\inf _{\mathcal{N}} I_{\la,1,0}(v)=I_{\la,1,0}\left(w\right)=I_{\la,1,0}\left(t_{v_{0}} v_{0}\right) .
	\end{equation*}
	Thus $t_{v_{0}} v_{0}$ is a constraint critical point of $I_{\la,1,0}$. Therefore $t_{v_{0}} v_{0}>0$ follows from the Lagrange multiplier and maximum principle, which further implies $v_{0}>0$ in $\mathbb{B}^{N}$. This contradicts \eqref{eqrr}. Hence $(2)$ holds.\\
	The proof of part $(3)$ follows from the Palais-Smale decomposition. 
\end{proof}
\begin{lem}\label{lem4.xx}
	Let $a$ as in Theorem \ref{thm4aa}. Then there exists a constant $\delta_{0}>0$ such that if $J_{\la,a, 0}(v) \leq I_{\la,1,0}(w)+\delta_{0}$, then
	\begin{equation}
		\int_{\mathbb{B}^{N}} \frac{x}{m(x)}|v(x)|^{p+1} \mathrm{~d} V_{\mathbb{B}^{N}}(x) \neq 0,
	\end{equation}
	where $m(x) >0$ is defined such that $d(\frac{x}{m},0)= \frac{1}{2}$, i.e., $m(x)= \frac{|x|}{\tanh \left(\frac{1}{4}\right)}.$
\end{lem}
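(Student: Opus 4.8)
The plan is to argue by contradiction, using the Palais--Smale decomposition of Proposition~\ref{prop4.cc} together with the non-attainment statement in Lemma~\ref{lem4.dd}. Write $\beta(v):=\int_{\bn}\frac{x}{m(x)}|v(x)|^{p+1}\vnn=\tanh(\tfrac14)\int_{\bn}\frac{x}{|x|}|v(x)|^{p+1}\vnn$, a vector in $\mathbb{R}^N$; the weight $\frac{x}{m(x)}$ is bounded by $\tanh(\tfrac14)<1$, so $\beta$ is well defined and continuous from $L^{p+1}(\bn)$ into $\mathbb{R}^N$. Suppose no $\delta_0$ works. Then there is a sequence $v_n\in\tilde\Sigma_+$ with $J_{\la,a,0}(v_n)\le I_{\la,1,0}(w)+\frac1n$ and $\beta(v_n)=0$. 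Since $J_{\la,a,0}\ge\inf_{\tilde\Sigma_+}J_{\la,a,0}=I_{\la,1,0}(w)$ by Lemma~\ref{lem4.dd}(i), the sequence $(v_n)$ is minimizing. Because $J_{\la,a,0}\to\infty$ near $\partial\tilde\Sigma_+$ by Proposition~\ref{prop4.cc}(a) (so minimizing sequences stay in a complete sublevel set) and $J_{\la,a,0}\in C^1(\tilde\Sigma_+,\mathbb{R})$ by Proposition~\ref{prop4.bb}(i), Ekeland's variational principle produces a minimizing Palais--Smale sequence $\tilde v_n$ at level $I_{\la,1,0}(w)$ with $\|\tilde v_n-v_n\|_{H_{\la}}\to0$.

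Next I would pin down the profile. Applying Proposition~\ref{prop4.cc} to $\tilde v_n$ (with $f\equiv0$, which is admissible) yields, along a subsequence, a critical point $u_0$ of $I_{\la,a,0}$, an integer $\ell$, and points $y_n^k$ with $d(y_n^k,0)\to\infty$ such that $I_{\la,a,0}(u_0)+\ell\,I_{\la,1,0}(w)=I_{\la,1,0}(w)$. Any nonzero critical point $u_0$ of $I_{\la,a,0}$ is nonnegative (test with $(u_0)_-$) and lies on the Nehari manifold, so after normalisation $v_0:=u_0/\|u_0\|_{H_{\la}}\in\tilde\Sigma_+$ satisfies $J_{\la,a,0}(v_0)=I_{\la,a,0}(u_0)\ge\inf_{\tilde\Sigma_+}J_{\la,a,0}=I_{\la,1,0}(w)$; equality would make the infimum attained, contradicting Lemma~\ref{lem4.dd}(ii). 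As $I_{\la,1,0}(w)>0$, the energy identity forces $u_0=0$ and $\ell=1$. Writing $y_n:=y_n^1$ and $\bar w_{y_n}:=w(\tau_{-y_n}(\cdot))/\|w\|_{H_{\la}}$, conclusion~(3) of Proposition~\ref{prop4.cc} combined with $\|\tilde v_n-v_n\|_{H_{\la}}\to0$ gives $\|v_n-\bar w_{y_n}\|_{H_{\la}}\to0$ with $d(y_n,0)\to\infty$.

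I would then transfer the vanishing of $\beta$ to the bubble. By continuity of $H_{\la}\hookrightarrow L^{p+1}(\bn)$ and of $\beta$ on $L^{p+1}$, we get $\beta(v_n)-\beta(\bar w_{y_n})\to0$, hence $\beta(\bar w_{y_n})\to0$. Projecting onto $e_n:=y_n/|y_n|$,
\[
e_n\cdot\beta(\bar w_{y_n})=\frac{\tanh(\tfrac14)}{\|w\|_{H_{\la}}^{p+1}}\int_{\bn}\frac{e_n\cdot x}{|x|}\,w(\tau_{-y_n}(x))^{p+1}\vnn .
\]
Since $\tau_{-y_n}$ is an isometry, the total mass $\|w\|_{H_{\la}}^{-(p+1)}\int_{\bn}w(\tau_{-y_n})^{p+1}\vnn$ equals the fixed positive constant $\|w\|_{L^{p+1}}^{p+1}/\|w\|_{H_{\la}}^{p+1}$, and the decay estimate \eqref{esti-limit} shows that for large $n$ all but an arbitrarily small fraction of this mass lies in a fixed geodesic ball $B(y_n,\rho)$. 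The geometric point is that, as $d(y_n,0)\to\infty$ (equivalently $|y_n|\to1$), the geodesic ball $B(y_n,\rho)$ is a Euclidean ball whose radius shrinks like $1-|y_n|$, so its angular width seen from the origin tends to $0$; therefore $\frac{e_n\cdot x}{|x|}\to1$ uniformly on $B(y_n,\rho)$, while the discontinuity of $x/|x|$ at the origin is harmless because the bubble carries negligible mass near $0$. Consequently $e_n\cdot\beta(\bar w_{y_n})\to\tanh(\tfrac14)\,\|w\|_{L^{p+1}}^{p+1}/\|w\|_{H_{\la}}^{p+1}>0$, so $|\beta(\bar w_{y_n})|$ is bounded away from $0$, contradicting $\beta(\bar w_{y_n})\to0$.

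The main obstacle is precisely the geometric estimate in the third step: controlling not only the spatial concentration but also the \emph{angular} concentration of the translated bubble's mass along the direction $y_n/|y_n|$ as the centre $y_n$ escapes to the ideal boundary $\partial\bn$. This rests on the disc-model description of geodesic balls near $\partial\bn$ together with the decay \eqref{esti-limit}; once it is established, the remaining steps are soft, following from the variational and Palais--Smale machinery already developed.
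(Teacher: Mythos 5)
Your proposal is correct and follows essentially the same route as the paper's own proof: argue by contradiction, use Ekeland's variational principle to produce a Palais--Smale sequence for $J_{\la,a,0}$ at level $I_{\la,1,0}(w)$, invoke the decomposition of Proposition~\ref{prop4.cc} (with $u_{0}=0$ and $\ell=1$, which you justify via Lemma~\ref{lem4.dd} more explicitly than the paper does), and then show the angular integral cannot vanish on a single bubble escaping to infinity. The only cosmetic difference is the final limit: the paper changes variables and uses dominated convergence, noting that $\tau_{y_{n}}(y)/|\tau_{y_{n}}(y)|$ converges to a fixed boundary direction, whereas you use mass concentration in geodesic balls together with the shrinking Euclidean angular width of such balls near $\partial\bn$ --- two equivalent expressions of the same geometric fact.
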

\begin{proof}
	Suppose on the contrary that there exists a sequence $\left\{v_{n}\right\} \subset \tilde{\Sigma}_{+}$ such that\\
	\begin{center}
		$J_{\la,a, 0}\left(v_{n}\right) \leq I_{\la,1,0}(w)+\frac{1}{n}$ and $\int_{\mathbb{B}^{N}} \frac{x}{m}|v_{n}(x)|^{p+1} \mathrm{~d} V_{\mathbb{B}^{N}}(x) \stackrel{n \rightarrow \infty}{\longrightarrow} 0$ hold.    
	\end{center}
	Then there exists $\tilde{v}_{n} \subset \tilde{\Sigma}_{+}$ by Ekeland's variational principle such that
	\begin{equation*}
		\begin{aligned}
			&\left\|v_{n}-\tilde{v}_{n}\right\|_{H_{\la}}  \stackrel{n \rightarrow \infty}{\longrightarrow} 0, \\
			J_{\la,a, 0}\left(\tilde{v}_{n}\right) &\leq J_{\la,a, 0}\left(v_{n}\right)\leq I_{\la,1,0}(w)+\frac{1}{n}, \\
			J_{\la,a, 0}^{\prime}\left(\tilde{v}_{n}\right) &\stackrel{n \rightarrow \infty}{\longrightarrow} 0 \text { in } H^{-1}\left(\mathbb{B}^{N}\right).
		\end{aligned}
	\end{equation*} 
	The above implies $\left\{\tilde{v}_{n}\right\}$ is a Palais Smale sequence for $J_{\la,a, 0}$ at the level $I_{\la,1,0}(w)$. \\ Further, by Proposition \ref{prop4.cc}, we have $\left\{y_{n}\right\} \subset \mathbb{B}^{N}$ such that $d(y_{n},0) \stackrel{n}{\rightarrow} \infty$ and
	\begin{equation*}
		\left\|\tilde{v}_{n}-\frac{w\left(\tau_{-y_{n}}(x)\right)}{\left\|w\left(\tau_{-y_{n}}(x)\right)\right\|_{H^{1}\left(\mathbb{B}^{N}\right)}}\right\|_{H^{1}\left(\mathbb{B}^{N}\right)}{\stackrel{n \rightarrow \infty}{\longrightarrow} 0}
	\end{equation*}
	Therefore,\\
	\begin{equation*}
		\begin{aligned}
			\left\|v_{n}-\frac{w\left(\tau_{-y_{n}}(x)\right)}{\left\|w\left(\tau_{-y_{n}}(x)\right)\right\|_{H_{\la}}}\right\|_{H_{\la}} \leq&\left\|v_{n}-\tilde{v}_{n}\right\|_{H_{\la}}\\
			&+\left\|\tilde{v}_{n}-\frac{w\left(\tau_{-y_{n}}(x)\right)}{\left\|w\left(\tau_{-y_{n}}(x)\right)\right\|_{H_{\la}}}\right\|_{H_{\la}} \stackrel{n \rightarrow \infty}{\longrightarrow} 0.
		\end{aligned}
	\end{equation*}
	Thus we can deduce
	\begin{equation*}
		\begin{aligned}
			\circ(1) &=\int_{\mathbb{B}^{N}} \frac{x}{m}|v_{n}(x)|^{p+1} \mathrm{~d} V_{\mathbb{B}^{N}}(x)=\int_{\mathbb{B}^{N}} \tanh(\frac{1}{4})\frac{x}{|x|}\left(\frac{w\left(\tau_{-y_{n}}(x)\right)}{\left\|w\left(\tau_{-y_{n}}(x)\right)\right\|_{H_{\la}}}\right)^{p+1}  \mathrm{~d} V_{\mathbb{B}^{N}}+\circ(1) \\
			&=\frac{\tanh(\frac{1}{4})}{\|w\|_{H_{\lambda}}^{p+1}} \int_{\mathbb{B}^{N}} \frac{\tau_{y_{n}}(y)}{|\tau_{y_{n}}(y)|}|w(y)|^{p+1}  \mathrm{~d} V_{\mathbb{B}^{N}}(y) \stackrel{n \rightarrow \infty}{\not\longrightarrow} \, 0, \ \text{upto a subsequence}.
		\end{aligned}
	\end{equation*}
	Hence we have come to a contradiction.
\end{proof}
Finally, in this section, we state some refinement of Corollary \ref{cor4a}.
\begin{prop}\label{prop4aa}
	Assume $a$ as in Lemma \ref{thm4aa}. Then for any
	$\varepsilon>0$, there exists $d(\varepsilon) \in\left(0, d_{2}\right]$ such that for $\|f\|_{H^{-1}\left(\mathbb{B}^{N}\right)} \leq d(\varepsilon)$, the following holds
	\begin{enumerate}[label=(\roman*)] 
		\item $\inf _{v \in \tilde{\Sigma}_{+}} J_{\la,a, f}(v) \in\left[I_{\la,1,0}(\omega)-\varepsilon, I_{\la,1,0}(\omega)+\varepsilon\right]$.
		\item $J_{\la,a, f}(v)$ satisfies $(\mathrm{PS})_{c}$ for
		\begin{equation*}
			\begin{aligned}
				c \in &\left(-\infty, I_{\la,a, f}\left(u_{l o c \min }(a, f ; x)\right)+I_{\la,1,0}(\omega)\right) \\
				& \cup\left(I_{\la,a, f}\left(u_{\text {loc } \min }(a, f ; x)\right)+I_{\la,1,0}(\omega), 2 I_{\la,1,0}(\omega)-\varepsilon\right) .
			\end{aligned}
		\end{equation*}
	\end{enumerate} 
\end{prop}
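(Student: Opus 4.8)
The plan is to establish $(i)$ by a two-sided sandwiching argument and $(ii)$ by feeding the infimum estimate from $(i)$, together with a crude lower bound on the energy of the local minimizer, into the Palais--Smale decomposition of Proposition~\ref{prop4.cc}. Throughout I abbreviate $W:=I_{\la,1,0}(w)$ and identify $u_{loc\min}(a,f;x)$ with the local minimizer $\mathcal{U}_{a,f}$ of Proposition~\ref{prop4.aa}, writing $E_0:=I_{\la,a,f}(\mathcal{U}_{a,f})$.

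For $(i)$ I would fix $\eta\in(0,1)$ and apply the two-sided bound \eqref{4.h}. Since Lemma~\ref{lem4.dd}$(i)$ gives $J_{\la,a,0}(v)\ge W$ for every $v\in\tilde{\Sigma}_{+}$, the left inequality in \eqref{4.h} yields
$$\inf_{v\in\tilde{\Sigma}_{+}} J_{\la,a,f}(v)\ \ge\ (1-\eta)^{\frac{p+1}{p-1}}\,W-\tfrac{1}{2\eta}\|f\|_{H^{-1}(\bn)}^{2},$$
while testing the right inequality in \eqref{4.h} against a sequence $v_n$ with $J_{\la,a,0}(v_n)\to W$ (the infimum need not be attained, by Lemma~\ref{lem4.dd}$(ii)$, but this is irrelevant here) gives
$$\inf_{v\in\tilde{\Sigma}_{+}} J_{\la,a,f}(v)\ \le\ (1+\eta)^{\frac{p+1}{p-1}}\,W+\tfrac{1}{2\eta}\|f\|_{H^{-1}(\bn)}^{2}.$$
Given $\varepsilon>0$, I would first choose $\eta$ so small that both $\big((1+\eta)^{\frac{p+1}{p-1}}-1\big)W$ and $\big(1-(1-\eta)^{\frac{p+1}{p-1}}\big)W$ are smaller than $\varepsilon/2$, and then shrink $d(\varepsilon)$ so that $\tfrac{1}{2\eta}\|f\|_{H^{-1}(\bn)}^{2}<\varepsilon/2$ whenever $\|f\|_{H^{-1}(\bn)}\le d(\varepsilon)$; this delivers $(i)$.

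For $(ii)$, Proposition~\ref{prop4.cc} shows that $(\mathrm{PS})_c$ can fail only at levels of the form $c=I_{\la,a,f}(u_0)+\ell W$ with $\ell\in\mathbb{N}$ and $u_0$ a critical point of $I_{\la,a,f}$. The first interval $(-\infty,E_0+W)$ is then covered verbatim by Corollary~\ref{cor4a}, since $E_0+W$ is the lowest breaking level. To handle the second interval I would rule out any breaking level inside $(E_0+W,\,2W-\varepsilon)$ by splitting on $\ell$ and $u_0$. By Proposition~\ref{prop4.bb}$(iii)$ every critical point $u_0\neq\mathcal{U}_{a,f}$ has the form $t_{a,f}(v_0)v_0$ with $J_{\la,a,f}^{\prime}(v_0)=0$, so by Lemma~\ref{lem4.cc}$(ii)$ and part $(i)$,
$$I_{\la,a,f}(u_0)=J_{\la,a,f}(v_0)\ \ge\ \inf_{\tilde{\Sigma}_{+}} J_{\la,a,f}\ \ge\ W-\varepsilon.$$
Hence for $\ell=1$ and $u_0\neq\mathcal{U}_{a,f}$ the level satisfies $c\ge 2W-\varepsilon$; for $\ell=1$, $u_0=\mathcal{U}_{a,f}$ it equals the excluded endpoint $E_0+W$; and for $\ell\ge 2$ it satisfies $c\ge E_0+2W$.

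The one estimate requiring care — and the main bookkeeping obstacle — is the lower bound $E_0\ge-\varepsilon$, which is exactly what forces the $\ell\ge 2$ levels to obey $c\ge E_0+2W\ge 2W-\varepsilon$. I would obtain it directly from Proposition~\ref{prop4.aa}: since $\mathcal{U}_{a,f}$ realizes $\inf_{B(r_1)}I_{\la,a,f}$ and, on $B(r_1)$, the Sobolev inequality together with $r_1^{p-1}=\tfrac1p S_{1,\la}^{\frac{p+1}{2}}$ gives
$$I_{\la,a,f}(u)\ \ge\ \Big(\tfrac12-\tfrac{1}{p(p+1)}\Big)\|u\|_{H_{\la}}^{2}-\|f\|_{H^{-1}(\bn)}\|u\|_{H_{\la}}\ \ge\ -\,C\,\|f\|_{H^{-1}(\bn)}^{2},$$
one concludes $E_0\ge-C\|f\|_{H^{-1}(\bn)}^{2}\ge-\varepsilon$ after a further shrinking of $d(\varepsilon)$. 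Taking $d(\varepsilon)$ to be the minimum of $d_2$ and the thresholds arising in $(i)$ and in the bound $E_0\ge-\varepsilon$, every breaking level then lies in $(-\infty,E_0+W]\cup[2W-\varepsilon,\infty)$, which is precisely the complement of the two intervals in $(ii)$. The only genuine difficulty is keeping the single parameter $\varepsilon$ consistent across both the infimum estimate of $(i)$ and the two-sided control of the breaking levels in $(ii)$; once that matching is arranged, the statement reduces entirely to the already-established decomposition and energy bounds.
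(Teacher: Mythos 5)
Your proof is correct, and there is nothing in the paper to compare it against line by line: the authors state this proposition as a ``refinement of Corollary~\ref{cor1.8}'' and give no proof at all (the details are implicitly deferred to the Adachi--Tanaka scheme). Your argument is precisely the assembly the paper's machinery is set up for: part (i) follows from the sandwich inequality \eqref{4.h} combined with $\inf_{\tilde\Sigma_+}J_{\la,a,0}=I_{\la,1,0}(w)$ from Lemma~\ref{lem4.dd}(i), choosing $\eta$ first and then shrinking $d(\varepsilon)$, exactly as you do; part (ii) follows from the breaking-level list $c=I_{\la,a,f}(u_0)+\ell I_{\la,1,0}(w)$, $\ell\ge 1$, of Proposition~\ref{prop4.cc}, the first interval being Corollary~\ref{cor1.8} verbatim. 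The two estimates that actually carry part (ii) are both handled correctly by you: for $\ell=1$ and $u_0\neq\mathcal{U}_{a,f}$, Proposition~\ref{prop4.bb}(iii) and Lemma~\ref{lem4.cc}(ii) give $I_{\la,a,f}(u_0)=J_{\la,a,f}(v_0)\ge \inf_{\tilde\Sigma_+}J_{\la,a,f}\ge I_{\la,1,0}(w)-\varepsilon$ by part (i), pushing the level to at least $2I_{\la,1,0}(w)-\varepsilon$; and for $\ell\ge 2$ the quadratic lower bound $I_{\la,a,f}(u)\ge \bigl(\tfrac12-\tfrac{1}{p(p+1)}\bigr)\|u\|_{H_{\la}}^2-\|f\|_{H^{-1}(\bn)}\|u\|_{H_{\la}}$ on $B(r_1)$ (valid since $a\le 1$ and $r_1^{p-1}=\tfrac1p S_{1,\la}^{(p+1)/2}$) yields $I_{\la,a,f}(\mathcal{U}_{a,f})\ge -C\|f\|_{H^{-1}(\bn)}^2\ge-\varepsilon$ after a further shrinking of $d(\varepsilon)$, which is the one step a careless reader might omit. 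Taking $d(\varepsilon)$ as the minimum of $d_2$ and the two thresholds keeps the single $\varepsilon$ consistent across (i) and (ii), so the proof is complete as written.
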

Now Lusternik-Schnirelmn (L-S) category theory will help us find the second and third positive solutions to \eqref{4.aa}. Note that the $(L-S)$ category of $A$ with respect to $M$ is denoted by cat $(A, M)$. Particularly, cat $(M)$ denotes cat $(M, M)$.\\
The following proposition is vital to obtain the second and third solutions to \eqref{4.aa}.
\begin{prop}
	Suppose $M$  is a Hilbert manifold and  $\Psi \in C^{1}(M, \mathbb{R})$. Assume that for $c_{0} \in \mathbb{R}$ and $k \in \mathbb{N}$
	
	\begin{enumerate}[label=(\roman*)] 
		\item $\Psi(x)$ satisfies $(P S)_{c}$ for $c \leq c_{0}.$
		\item $\operatorname{cat}\left(\left\{x \in M: \Psi(x) \leq c_{0}\right\}\right) \geq k.$
	\end{enumerate}
	Then $\Psi(x)$ has at least $k$ critical points in $\left\{x \in M: \Psi(x) \leq c_{0}\right\}$.
	\label{propcrit}
\end{prop}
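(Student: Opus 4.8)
The plan is to run the classical Lusternik--Schnirelman minimax scheme: I would manufacture $k$ minimax values out of the category, show that each is a critical value using the deformation lemma, and force any coincidence among these values to carry extra critical points. For $j=1,\dots,k$ set
\begin{equation*}
	c_j := \inf\bigl\{\, c \in \mathbb{R} : \operatorname{cat}\bigl(\{x \in M : \Psi(x) \le c\}\bigr) \ge j \,\bigr\}.
\end{equation*}
By hypothesis (ii) the level $c_0$ belongs to the defining set for each $c_j$ with $j\le k$, so every $c_j$ is finite and, by monotonicity of the category under inclusion together with the fact that $c\mapsto\operatorname{cat}(\{\Psi\le c\})$ is nondecreasing, one gets the ordering $c_1 \le c_2 \le \cdots \le c_k \le c_0$.

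First I would show that each $c_j$ is a critical value of $\Psi$. The tool is the deformation lemma, which is exactly where $(\mathrm{PS})_c$ enters: if $c_j$ were a regular value, then for some small $\varepsilon>0$ there is a continuous deformation $\eta(1,\cdot)$ carrying $\{\Psi \le c_j + \varepsilon\}$ into $\{\Psi \le c_j - \varepsilon\}$. Deformation monotonicity of the category would then give $\operatorname{cat}(\{\Psi \le c_j + \varepsilon\}) \le \operatorname{cat}(\{\Psi \le c_j - \varepsilon\})$. But from the definition of the infimum, $\operatorname{cat}(\{\Psi \le c_j + \varepsilon\}) \ge j$ (since $c_j+\varepsilon>c_j$) while $\operatorname{cat}(\{\Psi \le c_j - \varepsilon\}) \le j-1$ (since $c_j-\varepsilon<c_j$), a contradiction. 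Hence each $c_j$ is a critical value.

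Next I would treat coinciding minimax values, which is what produces multiplicity. Suppose $c_j = c_{j+1} = \cdots = c_{j+m} =: c$ with $m \ge 1$, and let $K_c = \{x : \Psi(x)=c,\ \Psi'(x)=0\}$; by $(\mathrm{PS})_c$ the set $K_c$ is compact. As $M$ is a Hilbert manifold (hence an ANR), $K_c$ has a neighbourhood $N$ with $\operatorname{cat}(N)=\operatorname{cat}(K_c)$. The second deformation lemma pushes $\{\Psi \le c+\varepsilon\}$ into $\{\Psi \le c-\varepsilon\}\cup N$, so monotonicity and subadditivity of the category yield
\begin{equation*}
	j+m \le \operatorname{cat}(\{\Psi \le c+\varepsilon\}) \le \operatorname{cat}(\{\Psi \le c-\varepsilon\}) + \operatorname{cat}(K_c) \le (j-1) + \operatorname{cat}(K_c),
\end{equation*}
whence $\operatorname{cat}(K_c) \ge m+1$.

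Finally I would assemble the count. Group $c_1 \le \cdots \le c_k$ into maximal blocks of equal values; a block of length $\ell$ corresponds to a critical level $c$ with $\operatorname{cat}(K_c) \ge \ell$ by the previous step, and since a finite set of $n$ points in a manifold has category at most $n$ (each point is contractible in $M$, then use subadditivity), a critical set of category at least $\ell$ contains at least $\ell$ points. Summing the block lengths gives at least $k$ critical points inside $\{\Psi \le c_0\}$, which is the assertion. The hard part is entirely contained in the two deformation lemmas: constructing a pseudo-gradient flow on the Hilbert manifold $M$ and using $(\mathrm{PS})_c$ to push sublevel sets strictly downward across a regular level and, in the degenerate case, across a critical level while sparing a neighbourhood of $K_c$. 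Once these standard tools are granted, the category bookkeeping above is routine, and I would simply cite them rather than reprove them.
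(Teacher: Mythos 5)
First, a point of comparison: the paper itself gives no proof of Proposition~\ref{propcrit} at all --- it is quoted as a standard Lusternik--Schnirelman result --- so yours is the only argument on the table, and it does follow the standard minimax route; however, it has a genuine gap in the category bookkeeping. Hypothesis (ii) --- and what the paper actually verifies in its application, via the maps $F$ and $G$ of Lemma~\ref{lemcat} --- is a bound on the \emph{intrinsic} category of the sublevel set $X_0:=\{x\in M:\Psi(x)\le c_0\}$, i.e.\ $\operatorname{cat}(X_0)=\operatorname{cat}(X_0,X_0)$ in the paper's notation. Your scheme, on the other hand, repeatedly invokes monotonicity of the category under inclusion and under deformation; these properties hold for the category $\operatorname{cat}_Y(\cdot)$ of subsets of a \emph{fixed} ambient space $Y$, and fail for the intrinsic category of varying sets (for instance $S^1\subset D^2$ has $\operatorname{cat}(S^1,S^1)=2>1=\operatorname{cat}(D^2,D^2)$). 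If every ``cat'' in your proof is read as $\operatorname{cat}_M$, the bookkeeping is valid but the proof no longer proves the stated result: since $\operatorname{cat}_M(X_0)\le\operatorname{cat}(X_0,X_0)$, hypothesis (ii) does not yield $\operatorname{cat}_M(X_0)\ge k$, so already your ordering $c_k\le c_0$ is unjustified; and in the paper's application only the intrinsic bound is available, because the map $G$ is defined only on the sublevel set, so contractions leaving it cannot be handled. The repair --- which is the real content of this sublevel-set version of the theorem --- is to take $Y=X_0$ as the ambient space, run all minimax levels and categories relative to $X_0$, and check that every deformation you invoke stays inside $X_0$; this last point is true because the first and second deformation lemmas are built from a negative pseudo-gradient flow along which $\Psi$ is non-increasing, so sublevel sets are positively invariant. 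None of this appears in your proposal, and without it the chain of inequalities does not connect to the hypothesis.

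Second, your assertion that each $c_j$ is finite is unsupported: you only establish $c_j\le c_0$, and nothing excludes $c_j=-\infty$. In fact the proposition as literally stated is false without an implicit assumption that $\Psi$ is bounded below: on $M=\mathbb{R}$ take $\Psi(x)=x$, $c_0=0$, $k=1$; then $(PS)_c$ holds vacuously for every $c$ (no Palais--Smale sequences exist, since $\Psi'\equiv 1$), and $\operatorname{cat}(\{\Psi\le 0\})=1$, yet $\Psi$ has no critical point --- precisely because $c_1=-\infty$ and there is no level at which to run the deformation argument. So any complete proof must use boundedness below; in the paper's application this is harmless since $\inf_{\tilde{\Sigma}_{+}}J_{\la,a,f}>0$ by Lemma~\ref{lem4.bb}, but your writeup should either add this hypothesis explicitly or flag that the finiteness of the minimax levels is exactly where it enters.
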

\begin{lem}(\cite{Adachi}, Lemma 2.5)
	Let $N \geq 1$ and $M$ be a topological space. Assume that there exist two continuous mappings
	\begin{equation*}
		F: S^{N-1}_{\mathbb{B}^{N}}\left(:=\left\{x \in \mathbb{B}^{N}:d(x,0)=1\right\}\right) \rightarrow M, \quad G: M \rightarrow S^{N-1}_{\mathbb{B}^{N}}
	\end{equation*}
	such that $G \circ F$ is homotopic to the identity map Id: $S^{N-1}_{\mathbb{B}^{N}} \rightarrow S^{N-1}_{\mathbb{B}^{N}}$, i.e,  there is a continuous $\operatorname{map} \eta:[0,1] \times S^{N-1}_{\mathbb{B}^{N}} \rightarrow S^{N-1}_{\mathbb{B}^{N}}$ such that
	\begin{equation*}
		\begin{aligned}
			&\eta(0, x)=(G \circ F)(x) \text { for all } x \in S^{N-1}_{\mathbb{B}^{N}} \\
			&\eta(1, x)=x \text { for all } x \in S^{N-1}_{\mathbb{B}^{N}}
		\end{aligned}
	\end{equation*}
	Then cat $(M) \geq 2$.
	\label{lemcat}
\end{lem}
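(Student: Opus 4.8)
The plan is to reduce the statement to the standard monotonicity of the Lusternik--Schnirelmann category under a map possessing a homotopy left-inverse, combined with the elementary computation $\mathrm{cat}(S^{N-1})=2$. Concretely, I would first isolate the general principle: if $X,Y$ are topological spaces and there are continuous maps $F:X\to Y$, $G:Y\to X$ with $G\circ F\simeq \mathrm{id}_{X}$, then $\mathrm{cat}(X)\le \mathrm{cat}(Y)$. Applying this with $X=S^{N-1}_{\mathbb{B}^{N}}$ and $Y=M$, and observing that in the disc model the hyperbolic sphere $S^{N-1}_{\mathbb{B}^{N}}=\{x\in\mathbb{B}^{N}:d(x,0)=1\}$ is a round Euclidean sphere, hence homeomorphic to $S^{N-1}$, so that $\mathrm{cat}(S^{N-1}_{\mathbb{B}^{N}})=\mathrm{cat}(S^{N-1})$, one obtains $\mathrm{cat}(M)\ge \mathrm{cat}(S^{N-1})$.

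The heart of the proof is the monotonicity principle, which I would establish by pulling back a categorical cover. Let $\{A_{1},\dots,A_{k}\}$ be a cover of $M$ by sets each contractible in $M$, with $k=\mathrm{cat}(M)$, and set $B_{i}:=F^{-1}(A_{i})$. These cover $S^{N-1}_{\mathbb{B}^{N}}$ because the $A_{i}$ cover $M$. The key step is to show each $B_{i}$ is contractible in $S^{N-1}_{\mathbb{B}^{N}}$. Since $A_{i}$ is contractible in $M$, the inclusion $A_{i}\hookrightarrow M$ is homotopic to a constant; precomposing with the corestriction $F|_{B_{i}}:B_{i}\to A_{i}$ shows that $F|_{B_{i}}:B_{i}\to M$ is nullhomotopic. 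Postcomposing this nullhomotopy with $G$ makes $G\circ F|_{B_{i}}:B_{i}\to S^{N-1}_{\mathbb{B}^{N}}$ nullhomotopic. On the other hand, restricting the hypothesised homotopy $G\circ F\simeq \mathrm{id}$ to $B_{i}$ gives a homotopy between $G\circ F|_{B_{i}}$ and the inclusion $B_{i}\hookrightarrow S^{N-1}_{\mathbb{B}^{N}}$; chaining the two homotopies shows this inclusion is nullhomotopic, i.e. $B_{i}$ is contractible in $S^{N-1}_{\mathbb{B}^{N}}$. Thus $\{B_{1},\dots,B_{k}\}$ is a categorical cover and $\mathrm{cat}(S^{N-1}_{\mathbb{B}^{N}})\le k=\mathrm{cat}(M)$.

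It then remains to record $\mathrm{cat}(S^{N-1})=2$: the sphere is not contractible, so its category is at least $2$, while two slightly enlarged closed hemispheres (each contractible) cover it, giving category exactly $2$ for every $N\ge 1$ (for $N=1$ the sphere $S^{0}$ is two contractible points). Together with the previous paragraph this yields $\mathrm{cat}(M)\ge 2$.

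The argument is purely topological and carries no analytic content; the only point demanding care is the homotopy bookkeeping in the contractibility claim. One must verify that the pulled-back sets $B_{i}$ are contractible inside the \emph{domain} sphere and not merely inside $M$, and this is precisely where the full hypothesis $G\circ F\simeq \mathrm{id}$ is used rather than $F$ being a genuine section. I would regard this chaining of the three homotopies as the main (indeed the only) step requiring attention.
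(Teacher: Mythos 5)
Your proof is correct. Note, however, that the paper itself offers no proof of this lemma: it is quoted verbatim from Adachi--Tanaka (\cite{Adachi}, Lemma 2.5), so there is no internal argument to compare against. Your route is the pullback-of-categorical-covers argument, which establishes the stronger monotonicity principle $\mathrm{cat}(S^{N-1}_{\mathbb{B}^{N}})\le\mathrm{cat}(M)$ whenever $G\circ F\simeq\mathrm{id}$, and your homotopy bookkeeping (null-homotope $F|_{B_i}$ inside $M$ using contractibility of $A_i$, push forward by $G$, then chain with the restriction of $\eta$ to identify the inclusion $B_i\hookrightarrow S^{N-1}_{\mathbb{B}^{N}}$ up to homotopy with a constant) is exactly right; the only conventions to keep straight are that preimages under $F$ preserve whichever class of sets (open or closed) your definition of category uses. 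The proof usually given for this lemma in the literature is shorter and proves only what is needed: if $\mathrm{cat}(M)=1$ then $\mathrm{id}_M$ is null-homotopic, so $G\circ F=G\circ\mathrm{id}_M\circ F$ is null-homotopic, and chaining with $\eta$ makes $\mathrm{id}$ on the sphere null-homotopic, contradicting the non-contractibility of $S^{N-1}$ (which also handles $N=1$, where $S^0$ is disconnected). Your version costs a little more bookkeeping but buys the general domination inequality for category, which is a reusable tool; the direct contradiction argument is more economical when, as here, only $\mathrm{cat}(M)\ge 2$ is needed.
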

Taking into account the above lemma, our next goal will be to construct two mappings:
\begin{equation*}
	\begin{aligned}
		&F: S^{N-1}_{\mathbb{B}^{N}} \rightarrow\left[J_{\la,a, f} \leq I_{\la,a, f}\left(\mathcal{U}_{a, f} (x)\right)+I_{\la,1,0}(w)-\varepsilon\right], \\
		&G:\left[J_{\la,a, f} \leq I_{\la,a, f}\left(\mathcal{U}_{a, f} (x)\right)+I_{\la,1,0}(w)-\varepsilon\right] \rightarrow S^{N-1}_{\mathbb{B}^{N}}
	\end{aligned}
\end{equation*}
such that $G \circ F$ is homotopic to the identity map.\\\\
Let us define $F_{R}: S^{N-1}_{\mathbb{B}^{N}} \rightarrow \tilde{\Sigma}_{+}$ as follows:\\
For $d(y,0) \geq R_{0}$, where $R_{0}$ is as found in Proposition \ref{enerest}, \eqref{4.ab} holds for all $t > 0$.
For $d(y,0) \geq R_{0}$, we will find $s=s(f, y)$ such that
\begin{equation*}
	\begin{aligned}
		\mathcal{U}_{a, f} (x)+s w(\tau_{-y}(x))=&t_{a, f}\left(\frac{\mathcal{U}_{a, f} (x)+s w(\tau_{-y}(x))}{\left\|\mathcal{U}_{a, f} (x)+s w(\tau_{-y}(x))\right\|_{H_{\la}}}\right)\\ & \times \frac{\mathcal{U}_{a, f} (x)+s w(\tau_{-y}(x))}{\left\|\mathcal{U}_{a, f} (x)+s w(\tau_{-y}(x))\right\|_{H_{\la}}}.
	\end{aligned}
\end{equation*}
This implies
\begin{equation}
	\left\|\mathcal{U}_{a, f} (x)+s w(\tau_{-y}(x))\right\|_{H_{\la}}=t_{a, f}\left(\frac{\mathcal{U}_{a, f} (x)+s w(\tau_{-y}(x))}{\left\|\mathcal{U}_{a, f} (x)+s w(\tau_{-y}(x))\right\|_{H_{\la}}}\right). \label{4.ll}
\end{equation}
Therefore,
\begin{equation*}
	\begin{aligned}
		J_{\la,a, f}\left(\frac{\mathcal{U}_{a, f} (x)+sw(\tau_{-y}(x))}{\left\|\mathcal{U}_{a, f} (x)+s w(\tau_{-y}(x))\right\|_{H_{\la}}}\right)&=I_{\la,a, f}\left(\mathcal{U}_{a, f} (x)+s w(\tau_{-y}(x))\right)\\
		&<I_{\la,a, f}\left(\mathcal{U}_{a, f} (x)\right)+I_{\la,1,0}(w).
	\end{aligned}
\end{equation*}
\begin{prop}(\cite{Adachi}, Proposition 2.6)
	Assume $a$ as in Theorem \ref{thm4aa}. Then there exists $d_{3} \in\left(0, d_{2}\right]$ and $R_{1}>R_{0}$ such that for any $\|f\|_{H^{-1}\left(\mathbb{B}^{N}\right)} \leq d_{3}$ and any $d(y,0) \geq R_{1}$, there exists a unique $s=s(f, y)>0$ in a neighbourhood of 1, satisfying \eqref{4.ll}. In addition,
	\begin{equation*}
		\left\{y \in \mathbb{B}^{N}:d(y,0)>R_{1}\right\} \rightarrow(0, \infty) ; \quad y \mapsto s(f, y)
	\end{equation*}
	is continuous.
\end{prop}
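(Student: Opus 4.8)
The plan is to convert the implicit relation \eqref{4.ll} into a scalar equation in $s$ and then run a uniform implicit-function argument around the limiting profile. Write $u_{s,y} := \mathcal{U}_{a,f}(x) + s\,w(\tau_{-y}(x))$ and $v_{s,y} := u_{s,y}/\|u_{s,y}\|_{H_\la}$, so that \eqref{4.ll} reads $\|u_{s,y}\|_{H_\la} = t_{a,f}(v_{s,y})$. Since $t_{a,f}(v)$ is the maximiser of $t\mapsto I_{\la,a,f}(tv)$, at which $\frac{d}{dt}I_{\la,a,f}(tv)=0$, the relation \eqref{4.ll} forces $I_{\la,a,f}'(u_{s,y})(v_{s,y})=0$, hence the vanishing of
\begin{equation*}
	\Phi(s) := I_{\la,a,f}'(u_{s,y})(u_{s,y}) = \|u_{s,y}\|_{H_\la}^2 - \int_{\bn} a(x)\,(u_{s,y})_+^{p+1}\,\vnn - \int_{\bn} f\,u_{s,y}\,\vnn .
\end{equation*}
Conversely, $\Phi(s)=0$ says that $t=\|u_{s,y}\|_{H_\la}$ is a critical point of $t\mapsto I_{\la,a,f}(tv_{s,y})$; by Lemma~\ref{lem4.cc}(i) there are at most two, the larger being $t_{a,f}(v_{s,y})$ and satisfying \eqref{4.ff}. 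Because \eqref{4.jj} bounds the larger critical point below by $(pS_{1,\la}^{-(p+1)/2})^{-1/(p-1)}$ while any other lies in $[0,(1-\frac{1}{p})^{-1}\|f\|_{H^{-1}(\bn)}]$ (Lemma~\ref{lem4.cc}(iii)), the bound $\|u_{s,y}\|_{H_\la}\ge c_0>0$ for $s$ near $1$ together with $\|f\|_{H^{-1}(\bn)}$ small eliminates the spurious branch. Thus solving \eqref{4.ll} near $s=1$ is equivalent to solving $\Phi(s)=0$ near $s=1$.

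Next I would identify the leading order. By Proposition~\ref{prop4.aa} one has $\mathcal{U}_{a,0}\equiv 0$ and $\|\mathcal{U}_{a,f}\|_{H_\la}\to 0$ as $\|f\|_{H^{-1}(\bn)}\to 0$, while $\|w(\tau_{-y}(\cdot))\|_{H_\la}^2 = \|w\|_{H_\la}^2 = \int_{\bn} w^{p+1}\,\vnn$ by isometry invariance and the equation \eqref{3d}. Expanding $\Phi$, the cross term $\langle \mathcal{U}_{a,f},w(\tau_{-y}(\cdot))\rangle_{H_\la}$ and the linear terms $\int f\,\mathcal{U}_{a,f}$, $\int f\,w(\tau_{-y}(\cdot))$ are bounded by $\|\mathcal{U}_{a,f}\|_{H_\la}\|w\|_{H_\la}$ and $\|f\|_{H^{-1}(\bn)}\|w\|_{H_\la}$, hence small uniformly in $y$ once $\|f\|_{H^{-1}(\bn)}$ is small; and using the exponential decay \eqref{esti-limit}, the concentration of $\mathcal{U}_{a,f}$ near the origin against that of $w(\tau_{-y}(\cdot))$ near $y$, together with $a(x)\to 1$ as $d(x,0)\to\infty$, both the interaction $\int a\{(u_{s,y})_+^{p+1}-\mathcal{U}_{a,f}^{p+1}-s^{p+1}w(\tau_{-y}(\cdot))^{p+1}\}$ and $\int(1-a)\,w(\tau_{-y}(\cdot))^{p+1}$ tend to $0$ as $d(y,0)\to\infty$. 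Consequently $\Phi(s)$ converges, uniformly on compact $s$-intervals, to
\begin{equation*}
	\bar\Phi(s) = \big(s^2-s^{p+1}\big)\,\|w\|_{H_\la}^2 ,
\end{equation*}
for which $\bar\Phi(1)=0$ and $\bar\Phi'(1)=(1-p)\|w\|_{H_\la}^2<0$ since $p>1$.

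With this in hand I would run a quantitative monotonicity argument. As $\Phi$ is $C^1$ in $s$ and $\partial_s\Phi$ converges uniformly on a fixed neighbourhood of $s=1$ to $\bar\Phi'$, with $\bar\Phi'(1)$ bounded away from $0$, one fixes $\eta>0$ and then chooses $d_3\in(0,d_2]$ and $R_1>R_0$ so that for $\|f\|_{H^{-1}(\bn)}\le d_3$ and $d(y,0)\ge R_1$ one has $\partial_s\Phi(s)<0$ throughout $[1-\eta,1+\eta]$ and $\Phi(1-\eta)>0>\Phi(1+\eta)$. Strict monotonicity then yields a unique zero $s=s(f,y)\in(1-\eta,1+\eta)$, which is the desired unique solution of \eqref{4.ll} near $1$. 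Continuity of $y\mapsto s(f,y)$ follows either from the implicit function theorem applied to $\Phi(s,y)$ at $\partial_s\Phi\ne 0$, or directly from uniqueness together with the joint continuity of $(s,y)\mapsto\Phi(s,y)$, since the integrals depend continuously on $y$ through the hyperbolic translation $\tau_{-y}$.

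The main obstacle is precisely the uniform vanishing, as $d(y,0)\to\infty$, of the interaction integrals and of $\int(1-a)\,w(\tau_{-y}(\cdot))^{p+1}$: one must exploit the hyperbolic decay estimate \eqref{esti-limit} and split $\bn$ into a neighbourhood of the origin, where $\mathcal{U}_{a,f}$ is concentrated and $w(\tau_{-y}(\cdot))$ is exponentially small, and its complement, where the roles reverse, controlling each contribution against the exponential volume growth of $\bn$. This is exactly the delicate hyperbolic estimate already encountered in Proposition~\ref{enerest}; once it is available, the remainder of the argument is the standard monotone implicit-function reduction.
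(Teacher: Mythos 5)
The paper does not actually prove this proposition: it is imported from Adachi--Tanaka (\cite{Adachi}, Proposition 2.6) and used as a black box, so there is no in-paper argument to compare yours against; your blind proof is correct and supplies exactly the argument the citation stands for. Your two-way reduction of \eqref{4.ll} to the scalar equation $\Phi(s)=I_{\la,a,f}'(u_{s,y})(u_{s,y})=0$ is sound: the forward direction because $t_{a,f}(v)$ is an interior maximum of $t\mapsto I_{\la,a,f}(tv)$, and the converse because Lemma~\ref{lem4.cc}(iii) confines any other critical point of $\tilde g$ to $\bigl[0,(1-\tfrac1p)^{-1}\|f\|_{H^{-1}(\bn)}\bigr]$, while $\|u_{s,y}\|_{H_\la}\geq(1-\eta)\|w\|_{H_\la}-r_1>0$ stays away from that range once $d_3$ is small (here one uses $r_1=p^{-1/(p-1)}S_{1,\la}^{\frac{p+1}{2(p-1)}}<\|w\|_{H_\la}$). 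Two details should be made explicit to close the argument. First, the bound $\|\mathcal{U}_{a,f}\|_{H_\la}\leq C\|f\|_{H^{-1}(\bn)}$ is not stated in Proposition~\ref{prop4.aa}; it follows in one line from $I_{\la,a,f}(\mathcal{U}_{a,f})\leq 0$ together with $r_1^{p-1}=S_{1,\la}^{(p+1)/2}/p$, and your expansion of $\Phi$ needs it. Second, among your error terms only $\int_{\bn}(1-a(x))\,w(\tau_{-y}(x))^{p+1}\vnn$ genuinely requires $d(y,0)$ large (via $a\to1$ at infinity and dominated convergence); every term involving $\mathcal{U}_{a,f}$ is already $O\bigl(\|f\|_{H^{-1}(\bn)}\bigr)$ uniformly in $y$ by H\"older and the Poincar\'e--Sobolev inequality, so the exponential-decay/separation analysis you invoke for those terms, while valid, is not needed and the delicate hyperbolic estimate of Proposition~\ref{enerest} is not actually required here. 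With these points filled in, the uniform convergence of $\Phi$ and $\partial_s\Phi$ to $(s^2-s^{p+1})\|w\|_{H_\la}^2$ and $(2s-(p+1)s^p)\|w\|_{H_\la}^2$, the sign change across $s=1$, the strict monotonicity on $[1-\eta,1+\eta]$, and the continuity of $y\mapsto s(f,y)$ by uniqueness plus joint continuity all go through as you wrote them.
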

Now we define a function $F_{R}: S^{N-1}_{\mathbb{B}^{N}} \rightarrow \tilde{\Sigma}_{+}$by
\begin{equation*}
	F_{R}(y)=\frac{\mathcal{U}_{a, f} (x)+s(f, \frac{\tanh (\frac{R}{2})}{\tanh\frac{1}{2}} y) w(\tau_{-\frac{\tanh (\frac{R}{2})}{\tanh\frac{1}{2}}  y}(x))}{\left\|{\mathcal{U}_{a, f} (x)+s(f, \frac{\tanh (\frac{R}{2})}{\tanh\frac{1}{2}} y) w(\tau_{-\frac{\tanh (\frac{R}{2})}{\tanh\frac{1}{2}}  y}(x))}\right\|_{H_{\la}}}
\end{equation*}
for $\|f\|_{H^{-1}\left(\mathbb{B}^{N}\right)} \leq d_{3}$ and $R \geq R_{1}$.\\
Then we have,
\begin{prop}
	For $0<\|f\|_{H^{-1}\left(\mathbb{B}^{N}\right)} \leq d_{3}$ and $R \geq R_{1}$, there exists $\varepsilon_{0} = \varepsilon_{0}(R)>0$ such that
	\begin{equation*}
		F_{R}\left(S^{N-1}_{\mathbb{B}^{N}}\right) \subseteq\left[J_{\la,a, f} \leq I_{\la,a, f}\left(\mathcal{U}_{a, f} (x)\right)+I_{\la,1,0}(w)-\varepsilon_{0}\right].
	\end{equation*}
\end{prop}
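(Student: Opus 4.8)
The plan is to reduce the asserted uniform sublevel inclusion to the pointwise strict energy inequality of Proposition~\ref{enerest}, and then upgrade it to a uniform gap by exploiting the compactness of the hyperbolic unit sphere $S^{N-1}_{\bn}$.

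First I would fix $R \geq R_1$ and $f$ with $0 < \|f\|_{H^{-1}(\bn)} \leq d_3$, and for $y \in S^{N-1}_{\bn}$ write $\tilde{y} := \frac{\tanh(R/2)}{\tanh(1/2)}\,y$, so that $|\tilde{y}| = \tanh(R/2)$ and hence $d(\tilde{y},0) = R \geq R_0$. By the construction of $s = s(f,\tilde{y})$ and the defining relation \eqref{4.ll}, together with the identity displayed immediately after it, the value of $J$ along $F_R$ equals an energy of $I$:
\begin{equation*}
	J_{\la,a,f}(F_R(y)) = I_{\la,a,f}\big(\mathcal{U}_{a,f}(x) + s(f,\tilde{y})\,w(\tau_{-\tilde{y}}(x))\big).
\end{equation*}
Since $\mathcal{U}_{a,f}$ is the critical point of $I_{\la,a,f}$ produced in Proposition~\ref{prop4.aa}, $d(\tilde{y},0) = R \geq R_0$, and $s(f,\tilde{y}) > 0$, I would then apply the energy estimate \eqref{4.ab} of Proposition~\ref{enerest} with $\tilde{\mathcal{U}}_{a,f} = \mathcal{U}_{a,f}$ and $t = s(f,\tilde{y})$ to obtain the pointwise strict inequality
\begin{equation*}
	J_{\la,a,f}(F_R(y)) < I_{\la,a,f}(\mathcal{U}_{a,f}(x)) + I_{\la,1,0}(w) \qquad \text{for every } y \in S^{N-1}_{\bn}.
\end{equation*}

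Next I would promote this pointwise bound to a uniform one. The map $y \mapsto F_R(y)$ is continuous, since $y \mapsto s(f,\tilde{y})$ is continuous by the preceding Adachi-type proposition, the hyperbolic translation and the $H_\la$-normalization depend continuously on $y$, and $F_R(y) \in \tilde{\Sigma}_+$ because $\mathcal{U}_{a,f} + s\,w > 0$. Composing with $J_{\la,a,f} \in C^1(\tilde{\Sigma}_+,\mathbb{R})$ from Proposition~\ref{prop4.bb}, the function $\Phi(y) := J_{\la,a,f}(F_R(y))$ is continuous on $S^{N-1}_{\bn}$. As $S^{N-1}_{\bn} = \{x \in \bn : d(x,0) = 1\}$ coincides with the Euclidean sphere $\{|x| = \tanh(1/2)\}$ and is therefore compact, $\Phi$ attains its maximum, and I would set
\begin{equation*}
	\varepsilon_0 = \varepsilon_0(R) := \big(I_{\la,a,f}(\mathcal{U}_{a,f}(x)) + I_{\la,1,0}(w)\big) - \max_{y \in S^{N-1}_{\bn}} \Phi(y).
\end{equation*}
This $\varepsilon_0$ is strictly positive precisely because the strict inequality above holds at a maximizer, and it gives $\Phi(y) \leq I_{\la,a,f}(\mathcal{U}_{a,f}(x)) + I_{\la,1,0}(w) - \varepsilon_0$ for all $y$, which is the claimed inclusion.

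The hard part will be exactly this passage from the pointwise strict inequality to a uniform gap: the strictness in \eqref{4.ab} alone does not bound the deficit away from zero, so one must genuinely use that $S^{N-1}_{\bn}$ is compact and that $s(f,\cdot)$ is continuous, so that $\Phi$ is continuous and its supremum is attained strictly below the threshold. No sequence of near-maximizers can escape this control because, for $R$ fixed, $s(f,\tilde{y})$ stays uniformly close to $1$ over the compact sphere, keeping $F_R(y)$ uniformly inside $\tilde{\Sigma}_+$ and away from $\partial\tilde{\Sigma}_+$.
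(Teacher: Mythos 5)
Your proposal is correct and takes essentially the same approach as the paper: the pointwise strict bound $J_{\la,a,f}(F_R(y)) < I_{\la,a,f}\left(\mathcal{U}_{a,f}\right) + I_{\la,1,0}(w)$ coming from the construction of $F_R$ (the relation \eqref{4.ll} together with the energy estimate \eqref{4.ab} of Proposition~\ref{enerest}), upgraded to a uniform gap $\varepsilon_0(R)$ by compactness. The paper states this as compactness of the image $F_R\left(S^{N-1}_{\bn}\right)$, while you phrase it as continuity of $J_{\la,a,f}\circ F_R$ on the compact sphere; these are the same argument.
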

\begin{proof}
	The following expression follows from the construction of $F_{R}$
	\begin{equation*}
		F_{R}\left(S^{N-1}_{\mathbb{B}^{N}}\right) \subseteq\left[J_{\la,a, f}<I_{\la,a, f}\left(\mathcal{U}_{a, f} (x)\right)+I_{\la,1,0}(w)\right]
	\end{equation*}
	Hence the proposition follows as $F\left(S^{N-1}_{\mathbb{B}^{N}}\right)$ is compact.
\end{proof} 
Thus we construct a mapping
\begin{equation*}
	F_{R}: S^{N-1}_{\mathbb{B}^{N}} \rightarrow\left[J_{\la,a, f} \leq I_{\la,a, f}\left(\mathcal{U}_{a, f} (x)\right)+I_{\la,1,0}(w)-\varepsilon_{0}(R)\right]
\end{equation*}
Now the following lemma is crucial for constructing the mapping $G$.
\begin{lem}
	There exists $d_{4} \in\left(0, d_{3}\right]$ such that if $\|f\|_{H^{-1}\left(\mathbb{B}^{N}\right)} \leq d_{4}$, then
	\begin{equation}
		\left[J_{\la,a, f}<I_{\la,a, f}\left(\mathcal{U}_{a, f} (x)\right)+I_{\la,1,0}(w)\right] \subseteq\left[J_{\la,a, 0}<I_{\la,1,0}(w)+\delta_{0}\right] \label{4.vv}
	\end{equation}
	where $\delta_{0}>0$ is as found in lemma \ref{lem4.xx}.
	\label{lemG}
\end{lem}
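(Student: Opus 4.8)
The plan is to compare $J_{\la,a,0}$ and $J_{\la,a,f}$ directly through the two-sided estimate \eqref{4.h} of Lemma \ref{lem4.bb}, exploiting the fact that the energy of the local minimum $\mathcal{U}_{a,f}$ is non-positive. The enabling observation is that $I_{\la,a,f}(\mathcal{U}_{a,f}) \leq 0$, which is immediate from \eqref{eqcrit} (equivalently from Proposition \ref{prop4.aa}, since $\mathcal{U}_{a,f}$ minimizes $I_{\la,a,f}$ over $B(r_1)$ and $I_{\la,a,f}(0)=0$). Consequently every $v$ belonging to the left-hand sublevel set of \eqref{4.vv} satisfies the cleaner, $f$-independent bound
\[
J_{\la,a,f}(v) < I_{\la,a,f}(\mathcal{U}_{a,f}) + I_{\la,1,0}(w) \leq I_{\la,1,0}(w),
\]
and this holds uniformly in $v$.

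Next I would apply the left-hand inequality of \eqref{4.h}, which for any $\varepsilon \in (0,1)$ and any $v \in \tilde{\Sigma}_+$ gives, after rearranging and inserting the bound above,
\[
J_{\la,a,0}(v) \leq (1-\varepsilon)^{-\frac{p+1}{p-1}}\left(J_{\la,a,f}(v) + \tfrac{1}{2\varepsilon}\|f\|_{H^{-1}(\bn)}^{2}\right) < (1-\varepsilon)^{-\frac{p+1}{p-1}}\left(I_{\la,1,0}(w) + \tfrac{1}{2\varepsilon}\|f\|_{H^{-1}(\bn)}^{2}\right).
\]
The proof then reduces to choosing the parameters so that the right-hand side falls below $I_{\la,1,0}(w) + \delta_0$, with $\delta_0$ the constant furnished by Lemma \ref{lem4.xx}.

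The choice is carried out in two stages, and the order of quantifiers is the only delicate point. Since $(1-\varepsilon)^{-\frac{p+1}{p-1}} \to 1$ as $\varepsilon \to 0^+$ and $I_{\la,1,0}(w) > 0$, I would first fix $\varepsilon$ small enough that $(1-\varepsilon)^{-\frac{p+1}{p-1}} I_{\la,1,0}(w) < I_{\la,1,0}(w) + \tfrac{\delta_0}{2}$. With $\varepsilon$ now frozen, the residual term $(1-\varepsilon)^{-\frac{p+1}{p-1}}\tfrac{1}{2\varepsilon}\|f\|_{H^{-1}}^2$ is controlled by shrinking $\|f\|_{H^{-1}}$, so I would take $d_4 \in (0,d_3]$ small enough that this term is strictly less than $\tfrac{\delta_0}{2}$ whenever $\|f\|_{H^{-1}} \leq d_4$. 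Adding the two halves yields $J_{\la,a,0}(v) < I_{\la,1,0}(w) + \delta_0$, which is precisely the asserted inclusion.

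Because the bound $J_{\la,a,f}(v) < I_{\la,1,0}(w)$ holds simultaneously for every $v$ in the sublevel set, all subsequent estimates are uniform in $v$, and the resulting threshold $d_4$ is independent of $v$, as required for a genuine set inclusion. There is no serious obstacle in this argument; the substantive content is entirely encoded in the sign condition $I_{\la,a,f}(\mathcal{U}_{a,f}) \leq 0$, which is what permits replacing the $f$-dependent comparison level by the clean limiting level $I_{\la,1,0}(w)$, together with the continuity of $(1-\varepsilon)^{-\frac{p+1}{p-1}}$ at $\varepsilon = 0$ that makes the two-stage selection of $\varepsilon$ and $d_4$ succeed.
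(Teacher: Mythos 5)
Your proposal is correct and follows essentially the same route as the paper's proof: both rest on the observation $I_{\la,a,f}(\mathcal{U}_{a,f})\leq 0$ (so the sublevel-set condition forces $J_{\la,a,f}(v)<I_{\la,1,0}(w)$) combined with the left-hand inequality of \eqref{4.h}. Your explicit two-stage selection of $\varepsilon$ first and then $d_4$ merely spells out the quantifier order that the paper compresses into ``since $\varepsilon\in(0,1)$ is arbitrary \dots for sufficiently small $\|f\|_{H^{-1}(\mathbb{B}^{N})}$,'' so there is nothing further to add.
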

\begin{proof}
	For any $\varepsilon \in(0,1)$, the following holds using \eqref{4.h}
	\begin{equation}
		J_{\la,a, 0}(v) \leq(1-\varepsilon)^{-\frac{p+1}{p-1}}\left(J_{\la,a, f}(v)+\frac{1}{2 \varepsilon}\|f\|_{H^{-1}\left(\mathbb{B}^{N}\right)}^{2}\right) \text { for all } v \in \tilde{\Sigma}_{+}.
		\label{4.mm}
	\end{equation}
	Now, if
	$$
	v \in\left[J_{\la,a, f}<I_{\la,a, f}\left(\mathcal{U}_{a, f} (x)\right)+I_{\la,1,0}(w)\right],
	$$
	then
	$$J_{\la,a, f}(v)<I_{\la,1,0}(w)$$ because $I_{\la,a, f}\left(\mathcal{U}_{a, f} (x)\right) \leq 0$.\\  Therefore, \eqref{4.mm} implies
	\begin{equation*}
		J_{\la,a, 0}(v) \leq(1-\varepsilon)^{-\frac{p+1}{p-1}}\left(I_{\la,1,0}(w)+\frac{1}{2 \varepsilon}\|f\|_{H^{-1}\left(\mathbb{B}^{N}\right)}^{2}\right),
	\end{equation*}
	for all $ v \in\left[J_{\la,a, f} \leq I_{\la,a, f}\left(\mathcal{U}_{a, f} (x)\right)+I_{\la,1,0}(w)\right].$
	
	Thus $v \in\left[J_{\la,a, 0} \leq(1-\varepsilon)^{-\frac{p+1}{p-1}}\left(I_{\la,1,0}(w)+\frac{1}{2 \varepsilon}\|f\|_{H^{-1}\left(\mathbb{B}^{N}\right)}^{2}\right)\right]$.\\
	Since $\varepsilon \in(0,1)$ is arbitrary, we get
	\begin{equation*}
		v \in\left[J_{\la,a, 0}<I_{\la,1,0}(w)+\delta_{0}\right] \text { for sufficiently small }\|f\|_{H^{-1}\left(\mathbb{B}^{N}\right)}.
	\end{equation*}
	Hence \eqref{4.vv} follows.
\end{proof}
We are now in a position to define the function $G$ as follows: 
$$G:\left[J_{\la,a, f}<I_{\la,a, f}\left(\mathcal{U}_{a, f} (x)\right)+I_{\la,1,0}(w)\right] \rightarrow S^{N-1}_{\mathbb{B}^{N}}$$
$$G(v):= tanh(\frac{1}{2})\frac{\int_{\mathbb{B}^{N}} \frac{x}{m}|v|^{p+1} \mathrm{~d} V_{\mathbb{B}^{N}}(x)}{\left|\int_{\mathbb{B}^{N}} \frac{x}{m} |v|^{p+1} \mathrm{~d} V_{\mathbb{B}^{N}}(x)\right|}
$$
where $m$ as defined in Lemma \ref{lem4.xx}, and the above function is well defined again by Lemma \ref{lem4.xx} and by Lemma \ref{lemG}.
Besides, we will show that these developments, i.e., $F$ and $G$ will serve our purpose.
\begin{prop}
	For a sufficiently large $R \geq R_{1}$ and for sufficiently small $\|f\|_{H^{-1}\left(\mathbb{B}^{N}\right)}>0$, we have,
	\begin{equation*}
		G \circ F_{R}: S^{N-1}_{\mathbb{B}^{N}} \rightarrow S^{N-1}_{\mathbb{B}^{N}}
	\end{equation*}
	is homotopic to identity.
	\label{propiden}
\end{prop}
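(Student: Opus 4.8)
The plan is to show that, once $R$ is taken large and $\|f\|_{H^{-1}(\bn)}$ small, the composition $G\circ F_R$ is uniformly $C^0$-close to the identity of $S^{N-1}_{\bn}$, and then to join the two maps by the standard radial homotopy on the sphere. Writing $y_R:=\frac{\tanh(R/2)}{\tanh(1/2)}\,y$ (so that $d(y_R,0)=R$ and $y_R/|y_R|=y/|y|$) and $s:=s(f,y_R)>0$, the positive scalar $\tanh(\tfrac14)$ coming from $x/m$ and the normalising power $\big\|\mathcal U_{a,f}+s\,w(\tau_{-y_R}(\cdot))\big\|_{H_\la}^{-(p+1)}$ cancel in the definition of $G$, because $\mathcal U_{a,f}>0$ and $w>0$; hence
\begin{equation*}
	(G\circ F_R)(y)=\tanh\!\big(\tfrac12\big)\,\frac{\mathcal I_R(y)}{\left|\mathcal I_R(y)\right|},\qquad
	\mathcal I_R(y):=\int_{\bn}\frac{x}{|x|}\big(\mathcal U_{a,f}(x)+s\,w(\tau_{-y_R}(x))\big)^{p+1}\vnn(x).
\end{equation*}
It therefore suffices to prove that the vector $\mathcal I_R(y)$ points in a direction arbitrarily close to $y/|y|$, uniformly in $y\in S^{N-1}_{\bn}$.

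First I would centre the bubble by the isometric change of variables $x=\tau_{y_R}(z)$, under which $\vnn$ is invariant and $w(\tau_{-y_R}(x))=w(z)$, so that
\begin{equation*}
	\mathcal I_R(y)=\int_{\bn}\frac{\tau_{y_R}(z)}{\left|\tau_{y_R}(z)\right|}\big(\mathcal U_{a,f}(\tau_{y_R}(z))+s\,w(z)\big)^{p+1}\vnn(z).
\end{equation*}
Two elementary facts drive the limit. From the explicit form \eqref{hypt} of $\tau_b$ one checks that, for each fixed $z$, $\tau_{y_R}(z)\to y/|y|$ as $R\to\infty$; moreover $d(\tau_{y_R}(z),0)\ge d(y_R,0)-d(z,0)=R-d(z,0)$, so $\tau_{y_R}(z)$ escapes to the ideal boundary on compact $z$-sets, whence $\mathcal U_{a,f}(\tau_{y_R}(z))\to 0$ since $\mathcal U_{a,f}\in H^1(\bn)\cap C(\bn)$ vanishes at infinity.

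Next I would split off the leading (pure bubble) part. Using the elementary inequality $(a+b)^{p+1}-b^{p+1}\le C_p\big(a^{p+1}+a\,b^{p}\big)$ for $a,b\ge 0$, and $\big|\tau_{y_R}(z)/|\tau_{y_R}(z)|\big|=1$, one obtains
\begin{equation*}
	\left|\mathcal I_R(y)-s^{p+1}\Big(\int_{\bn}w^{p+1}\,\vnn\Big)\frac{y}{|y|}\right|
	\le C_p\,\|\mathcal U_{a,f}\|_{L^{p+1}(\bn)}^{p+1}+o_R(1),
\end{equation*}
where the defect $o_R(1)$ collects two contributions that vanish uniformly in $y$ as $R\to\infty$: the convergence $\int_{\bn}\frac{\tau_{y_R}(z)}{|\tau_{y_R}(z)|}w^{p+1}\vnn\to \frac{y}{|y|}\int_{\bn}w^{p+1}\vnn$, which follows from dominated convergence with dominator $w^{p+1}\in L^1(\bn)$ together with the compactness of $S^{N-1}_{\bn}$ (giving uniformity in the direction $y/|y|$); and the mixed integral $\int_{\bn}\mathcal U_{a,f}(\tau_{y_R}(z))\,w(z)^{p}\vnn$, which tends to $0$ by splitting $\bn=\{d(z,0)\le T\}\cup\{d(z,0)>T\}$, estimating the outer region by H\"older's inequality and $\big(\int_{d(z,0)>T}w^{p+1}\big)^{p/(p+1)}$ (small for $T$ large, uniformly in $R$) and the inner region by $\sup_{d(\cdot,0)\ge R-T}\mathcal U_{a,f}\to 0$. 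Finally the fixed term $\|\mathcal U_{a,f}\|_{L^{p+1}(\bn)}^{p+1}\lesssim \|\mathcal U_{a,f}\|_{H_\la}^{p+1}\to 0$ as $\|f\|_{H^{-1}(\bn)}\to 0$, since $\mathcal U_{a,0}=0$ is the trivial local minimiser of $I_{\la,a,0}$ in $B(r_1)$ and $\mathcal U_{a,f}$ depends continuously on $f$ by Proposition~\ref{prop4.aa}.

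Combining these estimates, and recalling that $s=s(f,y_R)$ stays in a neighbourhood of $1$ (so that the leading vector has length bounded below by a positive constant independent of $y$), I conclude that for $R$ large and $\|f\|_{H^{-1}(\bn)}$ small $\mathcal I_R(y)$ lies in an arbitrarily narrow cone about $y/|y|$, uniformly in $y$. Hence $(G\circ F_R)(y)$ is uniformly close to $\tanh(\tfrac12)\,y/|y|=y$, and in particular $(G\circ F_R)(y)$ and $y$ are never antipodal on the sphere of radius $\tanh(\tfrac12)$. The map
\begin{equation*}
	\eta(t,y):=\tanh\!\big(\tfrac12\big)\,\frac{(1-t)\,(G\circ F_R)(y)+t\,y}{\left|(1-t)\,(G\circ F_R)(y)+t\,y\right|},\qquad (t,y)\in[0,1]\times S^{N-1}_{\bn},
\end{equation*}
is then well defined and continuous, with $\eta(0,\cdot)=G\circ F_R$ and $\eta(1,\cdot)=\mathrm{Id}$, which proves the claim. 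I expect the main obstacle to be the uniform-in-$y$ control of the nonlinear integral $\mathcal I_R(y)$: because $p+1$ is not an integer a finite binomial expansion is unavailable, so the cross-contribution must be handled through the inequality and the $T$/$R$ splitting above, and one must simultaneously keep the fixed defect $\|\mathcal U_{a,f}\|_{L^{p+1}}^{p+1}$ negligible — this is precisely why both $R$ large and $\|f\|$ small are needed.
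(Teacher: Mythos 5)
Your strategy is sound and, modulo one unjustified step, yields a complete argument; note that the paper itself writes out no proof here but simply defers to Adachi--Tanaka, whose homotopy is built in stages (first shrinking $\mathcal{U}_{a,f}$ and $f$ to zero inside the argument of $G$, then sliding the bubble to the ideal boundary and checking continuity at the endpoint via the decay of $w$). What you do instead is prove that $G\circ F_R$ is uniformly $C^0$-close to the identity and then invoke the elementary fact that two sphere-valued maps which are never antipodal are homotopic through normalized linear interpolation. Both routes hinge on the same computation, namely that the direction of $\int_{\bn}\frac{x}{|x|}\left(\mathcal{U}_{a,f}(x)+s\,w(\tau_{-y_R}(x))\right)^{p+1}\vnn$ converges to $y/|y|$ uniformly as $R\to\infty$, with the cross terms controlled because $\|\mathcal{U}_{a,f}\|_{H_\la}=O(\|f\|_{H^{-1}(\bn)})$. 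Your route buys a cleaner topological conclusion (no delicate continuity at the endpoint of the deformation), but it genuinely needs uniformity in $y$, and this you should justify by rotation equivariance --- since $w$ is radial, the volume element is rotation invariant, and $\tau_{Qb}\circ Q=Q\circ\tau_{b}$ for every rotation $Q$, the pure-bubble vector integral transforms equivariantly, so its defect from $c_R\,y/|y|$ is independent of $y$ --- rather than by ``compactness of $S^{N-1}_{\bn}$,'' because pointwise convergence on a compact parameter set does not by itself give uniform convergence.

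The one genuine gap is the treatment of the mixed term $\int_{\bn}\mathcal{U}_{a,f}(\tau_{y_R}(z))\,w(z)^{p}\,\vnn$: on the inner region $\{d(z,0)\le T\}$ you use $\sup_{d(\cdot,0)\ge R-T}\mathcal{U}_{a,f}\to 0$, asserting that $\mathcal{U}_{a,f}\in C(\bn)$ and vanishes pointwise at infinity. For $f$ merely in $H^{-1}(\bn)$ --- which is all this proposition assumes --- the critical point $\mathcal{U}_{a,f}$ is only an $H^{1}$ function; continuity and pointwise decay are precisely the kind of conclusions the paper can only reach in Section~\ref{Secasymp} under the extra hypotheses $f\in L^{2}(\bn)$ with exponential pointwise decay, so as written this step is not available. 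It is, however, repairable without any extra hypothesis on $f$: bound the inner integral by
\begin{equation*}
\int_{d(z,0)\le T}\mathcal{U}_{a,f}(\tau_{y_R}(z))\,w(z)^{p}\,\vnn
\;\le\;\|w\|_{L^{\infty}(\bn)}^{p}\,\mu\!\left(B(0,T)\right)^{\frac{p}{p+1}}\left(\int_{d(x,y_R)\le T}\mathcal{U}_{a,f}^{\,p+1}\,\vnn\right)^{\frac{1}{p+1}},
\end{equation*}
using the isometric change of variables and H\"older, and then observe that $\int_{d(x,y_R)\le T}\mathcal{U}_{a,f}^{\,p+1}\,\vnn\le\int_{d(x,0)\ge R-T}\mathcal{U}_{a,f}^{\,p+1}\,\vnn\to 0$ as $R\to\infty$, uniformly in $y$, since this is the tail of a fixed $L^{1}$ function. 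With that substitution (and the rotation-equivariance remark above) your proof goes through.
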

\begin{proof}
	The proof follows as in \cite{Adachi}.
\end{proof}
We are now in a situation to establish our main results:
\begin{prop}
	For sufficiently large $R \geq R_{1}$,
	\begin{equation*}
		\text { cat }\left(\left[J_{\la,a, f}<I_{\la,a, f}\left(\mathcal{U}_{a, f} (x)\right)+I_{\la,1,0}(w)-\varepsilon_{0}(R)\right]\right) \geq 2
	\end{equation*}
	\label{propcat}
\end{prop}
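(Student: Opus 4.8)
The plan is to verify the hypotheses of the abstract category estimate in Lemma~\ref{lemcat} with the topological space $M$ taken to be the sublevel set
$$
M := \left[J_{\la,a, f} < I_{\la,a, f}\left(\mathcal{U}_{a, f}(x)\right) + I_{\la,1,0}(w) - \varepsilon_0(R)\right],
$$
and with $F = F_R$ together with the map $G$ constructed above. Once continuous maps $F_R \colon S^{N-1}_{\mathbb{B}^{N}} \to M$ and $G \colon M \to S^{N-1}_{\mathbb{B}^{N}}$ are produced for which $G \circ F_R$ is homotopic to the identity, Lemma~\ref{lemcat} yields $\operatorname{cat}(M) \geq 2$ immediately, which is exactly the assertion.

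First I would fix $R \geq R_{1}$ large and $\|f\|_{H^{-1}\left(\mathbb{B}^{N}\right)}$ small enough that every smallness threshold invoked in the construction is met simultaneously, that is, $\|f\|_{H^{-1}\left(\mathbb{B}^{N}\right)} \leq d_{4} \leq d_{3} \leq d_{2}$. This guarantees at once that $F_R$ is well defined (through the implicit relation \eqref{4.ll}) and continuous on $S^{N-1}_{\mathbb{B}^{N}}$, that $G$ is available, and that the homotopy of Proposition~\ref{propiden} holds. The construction of $F_R$ together with the preceding proposition already gives the inclusion $F_R\left(S^{N-1}_{\mathbb{B}^{N}}\right) \subseteq M$, so $F_R$ indeed maps the sphere into the chosen sublevel set.

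Next I would check that $G$ restricts to a continuous map on $M$. Since $\varepsilon_0(R) > 0$, we have $M \subseteq \left[J_{\la,a, f} < I_{\la,a, f}\left(\mathcal{U}_{a, f}(x)\right) + I_{\la,1,0}(w)\right]$, which is precisely the domain on which $G$ was defined. The only point to confirm is that the denominator $\left|\int_{\mathbb{B}^{N}} \frac{x}{m}|v|^{p+1}\,\vnn\right|$ never vanishes on $M$; this is exactly the content of Lemma~\ref{lem4.xx} combined with Lemma~\ref{lemG}, since Lemma~\ref{lemG} places $M$ inside $\left[J_{\la,a,0} < I_{\la,1,0}(w) + \delta_{0}\right]$, the region on which Lemma~\ref{lem4.xx} ensures non-vanishing of that integral. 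Hence $G \colon M \to S^{N-1}_{\mathbb{B}^{N}}$ is well defined and continuous.

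Finally, Proposition~\ref{propiden} asserts precisely that $G \circ F_R \colon S^{N-1}_{\mathbb{B}^{N}} \to S^{N-1}_{\mathbb{B}^{N}}$ is homotopic to the identity for $R$ large and $\|f\|_{H^{-1}\left(\mathbb{B}^{N}\right)}$ small, so all the hypotheses of Lemma~\ref{lemcat} are in force and $\operatorname{cat}(M) \geq 2$ follows. The only genuinely delicate aspect is the bookkeeping of constants, namely securing a single choice of $R$ and of $\|f\|_{H^{-1}\left(\mathbb{B}^{N}\right)}$ that simultaneously activates the construction of $F_R$, the well-definedness of $G$ on $M$ via Lemmas~\ref{lem4.xx} and~\ref{lemG}, and the homotopy of Proposition~\ref{propiden}; but since each of these ingredients has already been established separately, the present argument reduces to assembling them.
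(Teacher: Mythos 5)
Your proposal is correct and follows exactly the paper's own argument, which simply combines Lemma~\ref{lemcat} (applied to the sublevel set $M$ with the maps $F_R$ and $G$) with the homotopy statement of Proposition~\ref{propiden}. The additional bookkeeping you carry out --- the inclusion $F_R(S^{N-1}_{\mathbb{B}^{N}}) \subseteq M$, and the well-definedness of $G$ on $M$ via Lemmas~\ref{lem4.xx} and~\ref{lemG} --- is implicit in the paper's one-line proof, so your write-up is just a more detailed rendering of the same route.
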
 
\begin{proof}
	The proof of the proposition follows by combining Lemma \ref{lemcat} and Proposition \ref{propiden}.
\end{proof}
The above proposition led us to the following multiplicity results.
\begin{thm}
	Let $a$ satisfies the assumptions as in Theorem \ref{thm4aa}. Then there exists $d_{5}>0$ such that if $\|f\|_{H^{-1}\left(\mathbb{B}^{N}\right)} \leq d_{5},\; f \geq 0,\; f \not\equiv 0$, then $J_{\la,a, f}(v)$ has at least two critical points in
	\begin{equation*}
		\left[J_{\la,a, f}<I_{\la,a, f}\left(\mathcal{U}_{a, f} (x)(a, f ; x)\right)+I_{\la,1,0}(w)\right]
	\end{equation*}
	\label{crit2,3}
\end{thm}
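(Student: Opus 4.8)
The plan is to deduce the statement directly from the abstract Lusternik--Schnirelman principle of Proposition~\ref{propcrit}, applied to $\Psi = J_{\la,a, f}$ on the Hilbert manifold $M = \tilde{\Sigma}_{+}$, after verifying its two hypotheses at a suitable level. First I would fix the data. Choose $R \geq R_{1}$ large enough that Proposition~\ref{propcat} applies, put
\[
c_{0} := I_{\la,a, f}\bigl(\mathcal{U}_{a, f}(x)\bigr) + I_{\la,1,0}(w) - \varepsilon_{0}(R),
\]
where $\varepsilon_{0}(R) > 0$ is the constant attached to the map $F_{R}$, and set $d_{5} := \min\{d_{3}, d_{4}, d(\varepsilon)\}$ (with $d_{3}, d_{4}$ governing the maps $F_{R}$, $G$ and $d(\varepsilon)$ as in Proposition~\ref{prop4aa}), so that for $0 \not\equiv f \geq 0$ with $\|f\|_{H^{-1}(\bn)} \leq d_{5}$ all the preparatory results above hold simultaneously.

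Next I would verify hypothesis (ii) of Proposition~\ref{propcrit}, which is precisely the content of Proposition~\ref{propcat}: for the chosen $R$,
\[
\operatorname{cat}\bigl(\{\, v \in \tilde{\Sigma}_{+} : J_{\la,a, f}(v) \leq c_{0} \,\}\bigr) \geq 2,
\]
this being obtained there by exhibiting $F_{R}: S^{N-1}_{\bn} \to [J_{\la,a, f} \leq c_{0}]$ and $G: [J_{\la,a, f} < I_{\la,a, f}(\mathcal{U}_{a, f}) + I_{\la,1,0}(w)] \to S^{N-1}_{\bn}$ whose composition is homotopic to the identity (Proposition~\ref{propiden}), combined with Lemma~\ref{lemcat}.

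It then remains to check hypothesis (i), the $(\mathrm{PS})_{c}$ condition for every $c \leq c_{0}$. Since $c_{0} = I_{\la,a, f}(\mathcal{U}_{a, f}) + I_{\la,1,0}(w) - \varepsilon_{0}(R) < I_{\la,a, f}(\mathcal{U}_{a, f}) + I_{\la,1,0}(w)$, and since $u_{\mathrm{loc\,min}}(a, f; x)$ coincides with $\mathcal{U}_{a, f}(x)$, each level $c \leq c_{0}$ falls inside the first interval $(-\infty,\, I_{\la,a, f}(\mathcal{U}_{a, f}) + I_{\la,1,0}(w))$ on which Proposition~\ref{prop4aa}(ii) furnishes $(\mathrm{PS})_{c}$. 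Here the role of Proposition~\ref{prop4.cc}(a) is essential: although $\tilde{\Sigma}_{+}$ is only an open subset of $\Sigma$, the blow-up $J_{\la,a, f} \to \infty$ as $\operatorname{dist}(v, \partial\tilde{\Sigma}_{+}) \to 0$ confines Palais--Smale sequences at finite levels away from $\partial\tilde{\Sigma}_{+}$, so the negative-gradient deformation underlying Proposition~\ref{propcrit} stays inside $\tilde{\Sigma}_{+}$ and the abstract theory remains valid on this non-complete manifold.

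With both hypotheses in hand, Proposition~\ref{propcrit} with $k = 2$ yields at least two critical points of $J_{\la,a, f}$ in $\{v : J_{\la,a, f}(v) \leq c_{0}\}$; as $c_{0} < I_{\la,a, f}(\mathcal{U}_{a, f}) + I_{\la,1,0}(w)$, these lie in the strict sublevel set $[J_{\la,a, f} < I_{\la,a, f}(\mathcal{U}_{a, f}) + I_{\la,1,0}(w)]$, which is the claim (and, via $v \mapsto t_{a, f}(v)v$ and Proposition~\ref{prop4.bb}(iii), produces the second and third solutions of Theorem~\ref{thm4aa}, distinct from $\mathcal{U}_{a, f}$). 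The step I expect to require the most care is exactly this verification that $(\mathrm{PS})_{c}$ persists along the \emph{entire} ray $c \leq c_{0}$ and that the incompleteness of $\tilde{\Sigma}_{+}$ does not obstruct the deformation; both are resolved by pairing the boundary blow-up of Proposition~\ref{prop4.cc}(a) with the break-down analysis of Proposition~\ref{prop4aa}, which locates the lowest broken level strictly above $c_{0}$.
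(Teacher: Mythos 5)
Your proposal is correct and follows essentially the same route as the paper: the paper's proof is exactly the combination of the Palais--Smale condition below the level $I_{\la,a, f}(\mathcal{U}_{a, f})+I_{\la,1,0}(w)$ (Corollary~\ref{cor1.8}, equivalently the first interval in Proposition~\ref{prop4aa}(ii) that you invoke), the category estimate of Proposition~\ref{propcat}, and the abstract Lusternik--Schnirelman principle of Proposition~\ref{propcrit}. Your additional remarks on choosing $c_{0}$ strictly below the breakdown level and on the boundary blow-up of Proposition~\ref{prop4.cc}(a) controlling the incompleteness of $\tilde{\Sigma}_{+}$ merely make explicit what the paper leaves implicit.
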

\begin{proof}
	Combining Corollary \ref{cor1.8}, Proposition \ref{propcat}, and Proposition \ref{propcrit}, the theorem follows.
\end{proof}
We can now finish the proof of Theorem \ref{thm4aa} as follows:\\
Firstly, set $u^{(1)}(x)=\mathcal{U}_{a, f} (x)$ as found in Proposition \ref{prop4.aa}. Also, using \eqref{eqcrit} $u^{(1)}(x)$ satisfies
\begin{equation*}
	I_{\la, a, f}\left(u^{(1)}(x)\right)\leq 0 .
\end{equation*}
By Theorem \ref{crit2,3}, $J_{\la,a, f}(v)$ has at least two critical points $v^{(2)}(x), v^{(3)}(x)$ in
\begin{equation*}
	\left[J_{\la,a, f}<I_{\la,a, f}\left(\mathcal{U}_{a, f} (x)(a, f ; x)\right)+I_{\la,1,0}(\omega)\right] .
\end{equation*}
Then $u^{(2)}(x)=t_{a, f}\left(v^{(2)}\right) v^{(2)}(x), u^{(3)}(x)=t_{a, f}\left(v^{(3)}\right) v^{(3)}(x)$ will be the corresponding solutions to \eqref{4.aa} using Proposition \ref{prop4.bb}. Moreover, by Lemma \ref{lem4.aa}, we get
\begin{equation*}
	\begin{aligned}
		0 &<I_{\la,a, f}\left(u^{(k)}(x)\right)=J_{\la,a, f}\left(v^{(k)}(x)\right) \\
		&<I_{\la,a, f}\left(u^{(1)}(x)\right)+I_{\la,1,0}(\omega) \quad \text { for } k=2,3.
	\end{aligned}
\end{equation*}
Hence $u^{(1)}(x), u^{(2)}(x), u^{(3)}(x)$ are distinct, and \ref{4.aa} possesses at least three positive solutions.

\medskip

\subsection{ The case  $a(x) \equiv 1 :$  Existence of the second solution}
The Remark \ref{rmk1.1} suggests that we need to find the critical points of the energy functional $I_{\la,1,f}$ to guarantee the existence of solutions to \eqref{4.aaa}.\\
\begin{proof}
	There exists $r_{1}>0$ such that 
	\begin{equation}
		I_{\la,1,f}(u) > 0 \quad \text { for } u \in S_{r_{1}}= \left\{u \in H^1\left(\bn\right) \mid\|u\|=r_{1}\right\}, \label{sol1}
	\end{equation}
	where $r_{1}$ is as found in Proposition \ref{prop4.aa}.
	Also, using Proposition \ref{prop4.aa} and \eqref{eqcrit}, we found a positive solution $\mathcal{U}_{1, f} (x)$ of \eqref{4.aaa} in $B\left(r_{1}\right)$ with $I_{\la,1,f}\left(\mathcal{U}_{1, f} (x)\right)\leqslant 0$.\\
	Now fix $y$ such that \eqref{energy2}
	holds. Further, it is not difficult to find $t_{0}>0$ such that $I_{\la,1,f}\left(\mathcal{U}_{1, f} (x)+t w\left(\tau_{y}(x)\right)\right)<0$ and $\|\mathcal{U}_{1, f} (x)+t w\left(\tau_{y}(x)\right)\|_{H_{\la}} > r_{1}$ for $t \geqslant t_{0}$.\\\\
	Set
	\begin{equation*}
		\begin{aligned}
			&\Gamma=\left\{\gamma \in C\left([0,1], H^{1}\left(\bn\right)\right) \mid \gamma(0)=\mathcal{U}_{1, f},\;\gamma(1)=\mathcal{U}_{1, f}+t_{0} w\left(\tau_{y}\right)\right\}, \\
			&c=\inf _{\gamma \in \Gamma} \max _{s \in[0,1]} I(\gamma(s)) .
		\end{aligned}
	\end{equation*}
	Moreover, we have
	\begin{equation}
		0 < c=\inf _{\gamma \in \Gamma} \max _{s\in[0,1]} I(\gamma(s))<I_{\la,1,f}\left(\mathcal{U}_{1, f} (x)\right)+I_{\la,1,0}(w), \label{mtnpass}
	\end{equation}
	which follows from \eqref{sol1} and \ref{energy2}.\\
	Thus applying the mountain-pass theorem of Ambrosetti and Rabinowitz and then using PS characterization (\ref{PS1.1}), we get a solution of \eqref{4.aaa}, say $\mathcal{V}_{1, f}$, such that 
	\begin{equation}
		c=I_{\la,1,f}\left(\mathcal{V}_{1, f} (x)\right)+m I_{\la,1,0}(w), \label{sol2a}
	\end{equation}
	for some non-negative integer $m$.
	Furthermore,\;\;\ref{sol2a} and \ref{mtnpass} imply $\mathcal{U}_{1, f} \neq \mathcal{V}_{1, f}$.\\
	With this, we have finished the proof of Theorem \ref{mainthm3}.
\end{proof}


\section{\bf{Proof of Theorem \ref{thm4bb}}}\label{Secpf3}
In this section, we prove Theorem \ref{thm4bb} by finding two positive critical points of the functional $I_{\la,a, f}$ (as defined in \eqref{energy}). We essentially follow the approach in the spirit of Jeanjean \cite{LJ}. 
Towards that, we partition $H^{1}\left(\mathbb{B}^{N}\right)$ into the following three disjoint sets:
\begin{equation*}
	\begin{gathered}
		U_{1}:=\left\{u \in H^{1}\left(\mathbb{B}^{N}\right): u=0  \text { or } g(u)>0\right\}, \quad U_{2}:=\left\{u \in H^{1}\left(\mathbb{B}^{N}\right): g(u)<0\right\}, \\
		U:=\left\{u \in H^{1}\left(\mathbb{B}^{N}\right) \backslash\{0\}: g(u)=0\right\}
	\end{gathered}
\end{equation*}
where $g: H^{1}\left(\mathbb{B}^{N}\right) \rightarrow \mathbb{R}$ is defined as
\begin{equation*}
	g(u):=\|u\|_{H_{\la}}^{2}-p\|a\|_{L^{\infty}\left(\mathbb{B}^{N}\right)}\|u\|_{L^{p+1}\left(\mathbb{B}^{N}\right)}^{p+1} .
\end{equation*}
\begin{rem}
	Observe that $\|u\|_{H_{\la}}$ and $\|u\|_{L^{p+1}\left(\mathbb{B}^{N}\right)}$ are bounded away from 0 for all $u \in U$. It follows from the fact that $p>1$ and Poincar\'{e}-Sobolev inequality on the hyperbolic space.\\
	\label{rm1}
\end{rem}
Further, define
\begin{equation}
	c_{0}:=\inf _{U_{1}} I_{\la, a, f}(u) \quad \text { and } \quad c_{1}:=\inf _{U} I_{\la, a, f}(u).
	\label{inf}
\end{equation}

\begin{rem}Clearly, $g(t u)=t^{2}\|u\|_{H_{\la}\left(\mathbb{B}^{N}\right)}^{2}-t^{p+1} p\|a\|_{L^{\infty}\left(\mathbb{B}^{N}\right)}\|u\|_{L^{p+1}\left(\mathbb{B}^{N}\right)}^{p+1}$ for any $t>0$. Moreover, 
	for $u \in H^{1}\left(\mathbb{B}^{N}\right)$ with $\|u\|_{H_{\la}}=1$, there exists unique $t=t(u)$ such that $t u \in U$. On the other hand, $g(t u)=\left(t^{2}-t^{p+1}\right)\|u\|_{H_{\la}}^{2}$ for any $u \in U$.
	Thus
	\begin{equation*}
		t u \in U_{1} \text { for all } t \in(0,1) \quad \text { and } \quad t u \in U_{2} \text { for all } t>1 \text {. }
	\end{equation*}
	\label{rm2}
\end{rem}
\begin{lem}
	The following inequality holds $\forall u \in U$,
	\begin{equation*}
		\frac{p-1}{p}\|u\|_{H_{\la}} \geq C_{p} S_{1,\la}^{\frac{p+1}{2(p-1)}},
	\end{equation*}
	where $S_{1,\la}$ as defined in \eqref{3c} and $C_{p}$ as defined in Theorem \ref{thm4bb}.
	\label{lem2.1}
\end{lem}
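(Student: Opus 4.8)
The plan is to unwind the two constraints that characterize membership in $U$ and feed them into the sharp Poincar\'e--Sobolev inequality encoded by $S_{1,\la}$. The starting point is that for $u \in U$ the defining relation $g(u) = 0$ reads $\|u\|_{H_{\la}}^{2} = p\|a\|_{L^{\infty}(\mathbb{B}^{N})}\|u\|_{L^{p+1}(\mathbb{B}^{N})}^{p+1}$. Since $u \neq 0$, Remark~\ref{rm1} guarantees $\|u\|_{H_{\la}}$ and $\|u\|_{L^{p+1}(\mathbb{B}^{N})}$ are both strictly positive, so all subsequent divisions are legitimate.

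The next step is to control the right-hand side from above. From the definition of $S_{1,\la}$ in \eqref{3c} as the infimum of $J_{\infty}$, one has for every nonzero $u$ the inequality $S_{1,\la}\big(\int_{\mathbb{B}^N}|u|^{p+1}\,\vn\big)^{\frac{2}{p+1}} \leq \|u\|_{\la}^{2}$, which rearranges to $\|u\|_{L^{p+1}(\mathbb{B}^{N})}^{p+1} \leq S_{1,\la}^{-\frac{p+1}{2}}\|u\|_{H_{\la}}^{p+1}$. Substituting this bound into the relation from $g(u)=0$ gives
\begin{equation*}
	\|u\|_{H_{\la}}^{2} \leq p\|a\|_{L^{\infty}(\mathbb{B}^{N})}\,S_{1,\la}^{-\frac{p+1}{2}}\,\|u\|_{H_{\la}}^{p+1}.
\end{equation*}

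I would then divide through by $\|u\|_{H_{\la}}^{2}$ and take the $(p-1)$-th root to obtain
\begin{equation*}
	\|u\|_{H_{\la}} \geq \left(p\|a\|_{L^{\infty}(\mathbb{B}^{N})}\right)^{-\frac{1}{p-1}}S_{1,\la}^{\frac{p+1}{2(p-1)}},
\end{equation*}
and finally multiply both sides by $\frac{p-1}{p}$, recognizing that $\frac{p-1}{p}\left(p\|a\|_{L^{\infty}(\mathbb{B}^{N})}\right)^{-\frac{1}{p-1}}$ is precisely the constant $C_{p}$ defined in Theorem~\ref{thm4bb}. This yields the claimed inequality. There is no genuine obstacle in this argument; the only point requiring care is the nondegeneracy needed to divide by $\|u\|_{H_{\la}}^{2}$, which is exactly what Remark~\ref{rm1} supplies, so the proof is a short chain of algebraic manipulations built on the variational characterization of $S_{1,\la}$.
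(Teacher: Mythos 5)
Your proof is correct and follows essentially the same route as the paper: both exploit the constraint $g(u)=0$ to tie $\|u\|_{L^{p+1}(\mathbb{B}^{N})}$ to $\|u\|_{H_{\la}}$, invoke the variational characterization of $S_{1,\la}$, and rearrange algebraically to isolate $\|u\|_{H_{\la}}$ before multiplying by $\frac{p-1}{p}$. The only cosmetic difference is the order of substitution (the paper plugs the expression for $\|u\|_{L^{p+1}}$ into the Sobolev inequality, while you plug the Sobolev bound into the constraint), which is an equivalent manipulation.
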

\begin{proof}
	As $u \in U$, we get $\|u\|_{L^{p+1}}=\frac{\|u\|_{H_{\la}\left(\mathbb{B}^{N}\right)}^{\frac{2}{p+1}}}{\left(p\|a\|_{L^{\infty}\left(\mathbb{B}^{N}\right)}\right)^{\frac{1}{p+1}}}.$ This, together with the definition of $S_{1,\la}$, gives
	\begin{equation*}
		\|u\|_{H_{\la}} \geq S_{1,\la}^{\frac{1}{2}}\|u\|_{L^{p+1}\left(\mathbb{B}^{N}\right)}=S_{1,\la}^{\frac{1}{2}} \frac{\|u\|_{H_{\la}}^{\frac{2}{p+1}}}{\left(p\|a\|_{L^{\infty}\left(\mathbb{B}^{N}\right)}\right)^{\frac{1}{p+1}}} \quad \forall u \in U .
	\end{equation*}
	Therefore, for all $u \in U$, we have
	\begin{equation*}
		\|u\|_{H_{\la}} \geq \frac{S_{1,\la}^{\frac{p+1}{2(p-1)}}}{\left(p\|a\|_{L^{\infty}\left(\mathbb{B}^{N}\right)}\right)^{\frac{1}{p-1}}}=\frac{p}{p-1} C_{p} S_{1,\la}^{\frac{p+1}{2(p-1)}}.
	\end{equation*}
	Thus the lemma follows.
\end{proof}

\begin{lem}
	Suppose
	\begin{equation}
		\inf _{u \in H^{1}\left(\mathbb{B}^{N}\right),\;\|u\|_{L^{p+1}\left(\mathbb{B}^{N}\right)=1}}\left\{C_{p}\|u\|_{H_{\la}}^{\frac{2 p}{p-1}}-\langle f, u\rangle\right\}>0,
		\label{2aaa}
	\end{equation}
	where $C_{p}$ is defined in Theorem \ref{thm4bb}.
	Then $c_{0}<c_{1}$, where $c_{0}$ and $c_{1}$ are as  defined in \eqref{inf}.
	\label{lem2.2}
\end{lem}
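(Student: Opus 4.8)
The plan is to locate the sign of the radial derivative of $I_{\la,a,f}$ on the set $U$ and turn it into a descent that pushes the energy below $c_1$. The engine is the quantity $\langle I_{\la,a,f}'(u),u\rangle=\|u\|_{H_\la}^2-\int_{\bn}a\,u_+^{p+1}-\langle f,u\rangle$. For $u\in U$ we have $g(u)=0$, i.e. $\|u\|_{H_\la}^2=p\|a\|_{L^\infty}\|u\|_{L^{p+1}}^{p+1}$, while $\int_{\bn}a\,u_+^{p+1}\le\|a\|_{L^\infty}\|u\|_{L^{p+1}}^{p+1}=\tfrac1p\|u\|_{H_\la}^2$; hence $\langle I_{\la,a,f}'(u),u\rangle\ge\frac{p-1}{p}\|u\|_{H_\la}^2-\langle f,u\rangle$. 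The first thing I would do is feed hypothesis \eqref{2aaa} into this: writing $v=u/\|u\|_{L^{p+1}}$ (so $\|v\|_{L^{p+1}}=1$), substituting the relation $\|u\|_{L^{p+1}}^{p+1}=\|u\|_{H_\la}^2/(p\|a\|_{L^\infty})$, and using the explicit value $C_p=(p\|a\|_{L^\infty})^{-1/(p-1)}\frac{p-1}{p}$, the scaling works out exactly so that $\langle f,u\rangle\le\frac{p-1}{p}\|u\|_{H_\la}^2-\mu_0\,\|u\|_{L^{p+1}}$, where $\mu_0>0$ is the positive infimum in \eqref{2aaa}. Therefore $\langle I_{\la,a,f}'(u),u\rangle\ge\mu_0\|u\|_{L^{p+1}}>0$ on $U$, and by Lemma \ref{lem2.1} this is bounded below by a fixed $\mu_0\ell_0>0$. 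This computation is the heart of the matter and is precisely where the form of $C_p$ is calibrated.

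Next I would convert this into a comparison. For fixed $u\in U$ set $h_u(t):=I_{\la,a,f}(tu)$; then $h_u'(1)=\langle I_{\la,a,f}'(u),u\rangle>0$, and $h_u''(t)=\|u\|_{H_\la}^2-p\,t^{p-1}\int_{\bn}a\,u_+^{p+1}\ge\|u\|_{H_\la}^2-p\int_{\bn}a\,u_+^{p+1}\ge0$ for $t\in(0,1]$, so $h_u$ is strictly decreasing just to the left of $t=1$. Since $tu\in U_1$ for $t\in(0,1)$ by Remark \ref{rm2}, this gives $I_{\la,a,f}(tu)<I_{\la,a,f}(u)$ for $t$ slightly below $1$, hence $c_0\le I_{\la,a,f}(u)$ for every $u\in U$, i.e. $c_0\le c_1$. (Separately, $c_0<0$: as $f\ge0$, $f\not\equiv0$, choose $\varphi\ge0$ with $\langle f,\varphi\rangle>0$; then $t\varphi\in U_1$ and $I_{\la,a,f}(t\varphi)<0$ for small $t>0$.)

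The delicate point — and the step I expect to be the main obstacle — is upgrading the pointwise strict inequality $c_0<I_{\la,a,f}(u)$ for each $u\in U$ into a strict inequality between the two infima, since passing to the infimum on $U$ could a priori destroy the strictness. I would resolve this with a uniform gap. First note $I_{\la,a,f}$ is coercive on $U$: $I_{\la,a,f}(u)\ge A_0\|u\|_{H_\la}^2-c\|f\|_{H^{-1}}\|u\|_{H_\la}$ with $A_0=\tfrac12-\tfrac1{p(p+1)}>0$, so any minimizing sequence $\{u_n\}\subset U$ for $c_1$ is bounded, and on $U$ the identity $\|u\|_{L^{p+1}}^{p+1}=\|u\|_{H_\la}^2/(p\|a\|_{L^\infty})$ then pins $\|u_n\|_{L^{p+1}}$ to a range $[\ell_0,M']$ with $\ell_0>0$ from Lemma \ref{lem2.1}. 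A second-order Taylor estimate of $h_{u_n}$ at $t=1$, using $0\le h_{u_n}''\le\|u_n\|_{H_\la}^2$ on $(0,1)$, yields $I_{\la,a,f}(u_n)-\min_{t\in[0,1]}I_{\la,a,f}(tu_n)\ge \frac{\langle I_{\la,a,f}'(u_n),u_n\rangle^2}{2\|u_n\|_{H_\la}^2}\ge\frac{\mu_0^2\|u_n\|_{L^{p+1}}^2}{2\|u_n\|_{H_\la}^2}\ge\delta_0$ for a fixed $\delta_0>0$ on the pinched range, the minimizer $t_nu_n$ lying in $U_1$. Consequently $c_0\le I_{\la,a,f}(u_n)-\delta_0\to c_1-\delta_0$, which gives $c_0\le c_1-\delta_0<c_1$ and completes the proof. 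This compactness-free route is preferable to invoking the Palais–Smale condition, since near $c_1$ one would otherwise have to rule out bubbling.
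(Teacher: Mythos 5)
Your proposal is correct, and its engine is identical to the paper's: the calibration of $C_{p}$ so that hypothesis \eqref{2aaa}, tested on $u/\|u\|_{L^{p+1}(\bn)}$ and combined with the constraint $\|u\|_{H_{\la}}^{2}=p\|a\|_{L^{\infty}(\bn)}\|u\|_{L^{p+1}(\bn)}^{p+1}$ defining $U$, yields the uniform bound $\frac{p-1}{p}\|u\|_{H_{\la}}^{2}-\langle f,u\rangle\geq\mu_{0}\|u\|_{L^{p+1}(\bn)}$ on $U$. This is exactly the paper's Step 1, where the same quantity appears as $\frac{d}{dt}\tilde{J}(tu)|_{t=1}$ for the auxiliary functional $\tilde{J}(u)=\frac{1}{2}\|u\|_{H_{\la}}^{2}-\frac{\|a\|_{L^{\infty}(\bn)}}{p+1}\|u\|_{L^{p+1}(\bn)}^{p+1}-\langle f,u\rangle$. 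Where you diverge is in how the uniform gap between $c_{0}$ and $c_{1}$ is extracted. The paper introduces $\tilde{J}$, first proves $c_{0}<0$ in order to force $\langle f,u_{n}\rangle>0$ along a minimizing sequence, locates the unique zero $t_{n}\in(0,1)$ of $\frac{d}{dt}\tilde{J}(tu_{n})$, bounds $\tilde{J}(u_{n})-\tilde{J}(t_{n}u_{n})$ below by integrating the derivative over an interval $[1-\xi,1]$ with $\xi$ uniform in $n$ (using the bounded second derivative), and finally transfers the gap to $I_{\la,a,f}$ via $\frac{d}{dt}I_{\la,a,f}(tu)\geq\frac{d}{dt}\tilde{J}(tu)$. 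You instead work directly with $h_{u}(t)=I_{\la,a,f}(tu)$ and obtain the gap from the quantitative Taylor-type bound $h(1)-\min_{[0,1]}h\geq\beta^{2}/(2L)$ with $\beta=h'(1)$ and $L$ an upper bound for $h''$. This eliminates the auxiliary functional, makes the claim $c_{0}<0$ unnecessary (your minimum over $[0,1]$ lands in $U_{1}$ automatically, since $0\in U_{1}$ and $tu\in U_{1}$ for $t\in(0,1)$ by Remark \ref{rm2}), and makes the gap explicit; the price is nil, so your route is a genuine streamlining of the same idea.

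Two small repairs. First, since $h_{u}'(1)>0$ and $h_{u}'$ is continuous, $h_{u}$ is strictly \emph{increasing}, not decreasing, just to the left of $t=1$; that is what gives $I_{\la,a,f}(tu)<I_{\la,a,f}(u)$ there. Second, the bound $\beta^{2}/(2L)$ is only valid when $\beta\leq L=\|u_{n}\|_{H_{\la}}^{2}$, which can fail (for instance if $\langle f,u_{n}\rangle<0$, one can have $\beta>L$). In the complementary case $\beta>L$, integrating $h'(t)\geq\beta-L(1-t)$ over all of $[0,1]$ gives $h(1)-h(0)\geq\beta-L/2\geq L/2$, and $L$ is bounded below on $U$ by Lemma \ref{lem2.1}; so in either case the gap is uniformly positive along the bounded minimizing sequence and your conclusion $c_{0}\leq c_{1}-\delta_{0}<c_{1}$ stands.
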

\begin{proof}
	Define,
	\begin{equation}
		\tilde{J}(u):=\frac{1}{2}\|u\|_{H_{\la}}^{2}-\frac{\|a\|_{L^{\infty}\left(\mathbb{B}^{N}\right)}}{p+1}\|u\|_{L^{p+1}\left(\mathbb{B}^{N}\right)}^{p+1}-\langle f, u\rangle, \quad u \in H^{1}\left(\mathbb{B}^{N}\right).
		\label{4hhh}
	\end{equation}
	\begin{enumerate}[label = \textbf{Step \arabic*:}]
		\item  This step aims to show the existence of a constant $\alpha>0$ such that
		\begin{equation*}
			\left.\frac{d}{d t} \tilde{J}(t u)\right|_{t=1} \geq \alpha \quad \forall u \in U .
		\end{equation*}
		It directly follows from the definition of $\tilde{J}$ that \\
		$$\left.\frac{d}{d t} \tilde{J}(t u)\right|_{t=1}=\|u\|_{H_{\la}}^{2}-\|a\|_{L^{\infty}\left(\mathbb{B}^{N}\right)}\|u\|_{L^{p+1}\left(\mathbb{B}^{N}\right)}^{p+1}-{\langle f, u\rangle}.$$
		
		Therefore, from the definition of $U$ and substituting the value of $C_{p}$, we have for $u \in U$
		\begin{equation}
			\begin{aligned}
				\left.\frac{d}{d t} \tilde{J}(t u)\right|_{t=1}=\frac{p-1}{p}\|u\|_{H_{\la}}^{2}-{\langle f, u\rangle} &=\left(p\|a\|_{L^{\infty}\left(\mathbb{B}^{N}\right)}\right)^{\frac{1}{p-1}} C_{p}\|u\|_{H_{\la}}^{2}-\langle f, u\rangle \\
				&=\left(\frac{\|u\|_{H_{\la}}^{2}}{\|u\|_{L^{p+1}\left(\mathbb{B}^{N}\right)}^{p+1}}\right)^{\frac{1}{p-1}} C_{p}\|u\|_{H_{\la}\left(\mathbb{B}^{N}\right)}^{2}-\langle f, u\rangle\\
				&=C_{p} \frac{\|u\|_{H_{\la}}^{\frac{2p}{p-1}}}{\|u\|_{L^{p+1}\left(\mathbb{B}^{N}\right)}^{\frac{p+1}{p-1}}}-\langle f, u\rangle.
			\end{aligned}
			\label{4ddd}
		\end{equation}
		Furthermore, the given hypothesis, i.e., \eqref{2aaa} implies there exists $d>0$ such that
		\begin{equation}
			\inf _{u \in H^{1}\left(\mathbb{B}^{N}\right),\|u\|_{L^{p+1}\left(\mathbb{B}^{N}\right)=1}}\left\{C_{p}\|u\|_{H_{\la}}^{\frac{2 p}{p-1}}-\langle f, u\rangle\right\} \geq d .
			\label{4bbb}
		\end{equation}
		Now,
		\begin{align*}
			\eqref{4bbb}&\Longleftrightarrow C_{p} \frac{\|u\|_{H_{\la}\left(\mathbb{B}^{N}\right)}^{\frac{2 p}{p-1}}}{\|u\|_{L^{p+1}\left(\mathbb{B}^{N}\right)}^{\frac{p+1}{p-1}}}-\langle f, u\rangle \geq d, \quad\|u\|_{L^{p+1}\left(\mathbb{B}^{N}\right)}=1\\  
			&\Longleftrightarrow C_{p} \frac{\|u\|_{H_{\la}\left(\mathbb{B}^{N}\right)}^{\frac{2 p}{p-1}}}{\|u\|_{L^{p+1}\left(\mathbb{B}^{N}\right)}^{\frac{p+1}{p-1}}}-\langle f, u\rangle \geq d\|u\|_{L^{p+1}\left(\mathbb{B}^{N}\right)}, \quad u \in H^{1}\left(\mathbb{B}^{N}\right) \backslash\{0\} .
		\end{align*}
		
		Hence, step 1 follows by using the above estimate in \eqref{4ddd} and by Remark \eqref{rm1}.
		\item Let $u_{n}$ be a minimizing sequence for $I_{\la,a, f}$ on $U$, i.e.,\\
		$I_{\la,a, f}\left(u_{n}\right) \rightarrow c_{1}$ and $\left\|u_{n}\right\|_{H_{\la}}^{2}=p\|a\|_{L^{\infty}\left(\mathbb{B}^{N}\right)}\left\|u_{n}\right\|_{L^{p+1}\left(\mathbb{B}^{N}\right)}^{p+1}$. Thus for $n$ large, we get
		\begin{equation*}
			c_{1}+o(1) \geq I_{\la,a, f}\left(u_{n}\right) \geq \tilde{J}\left(u_{n}\right) \geq\left(\frac{1}{2}-\frac{1}{p(p+1)}\right)\left\|u_{n}\right\|_{H_{\la}}^{2}-\|f\|_{H^{-1}\left(\mathbb{B}^{N}\right)}\left\|u_{n}\right\|_{H_{\la}} .
		\end{equation*}
		As a result, $\left\{\tilde{J}\left(u_{n}\right)\right\}$ is a bounded sequence. Also, $\left\|u_{n}\right\|_{H_{\la}}$ and $\left\|u_{n}\right\|_{L^{p+1}\left(\mathbb{B}^{N}\right)}$ are bounded.\\
		\textbf{Claim}: $c_{0}<0$.\\
		To prove the above claim, it suffices to show that there exists $v \in U_{1}$ such that $I_{\la,a, f}(v)<0$. Remark \eqref{rm2} implies we can choose $u \in U$ such that $\langle f, u\rangle>0$. Therefore,
		\begin{equation*}
			I_{\la,a, f}(t u) \leq t^{2}\|u\|_{L^{p+1}\left(\mathbb{B}^{N}\right)}^{p+1}\left[\frac{p\|a\|_{L^{\infty}\left(\mathbb{B}^{N}\right)}}{2}-\frac{t^{p-1}}{p+1}\right]-t\langle f, u\rangle<0 .
		\end{equation*}
		for $t<<1$. Moreover, by Remark \eqref{rm2}, $t u \in U_{1}$. This proves the claim.
		
		Now $I_{\la, a, f}\left(u_{n}\right)<0$ for large $n$ by using the above claim. Consequently,
		\begin{equation*}
			0>I_{\la,a, f}\left(u_{n}\right) \geq\left(\frac{1}{2}-\frac{1}{p(p+1)}\right)\left\|u_{n}\right\|_{H_{\la}}^{2}-\left\langle f, u_{n}\right\rangle.
		\end{equation*}
		Therefore, $p>1$ implies $\left\langle f, u_{n}\right\rangle>0$ for all large $n$ . As a a result, $\frac{d}{d t} \tilde{J}\left(t u_{n}\right)<0$ for $t>0$ small enough. Thus, by Step 1, there exists $t_{n} \in(0,1)$ such that $\frac{d}{d t} \tilde{J}\left(t_{n} u_{n}\right)=0$. In addition, $t_{n}$ is unique because
		\begin{equation*}
			\frac{d^{2}}{d t^{2}} \tilde{J}(t u)=\|u\|_{H_{\la}}^{2}-p\|a\|_{L^{\infty}\left(\mathbb{B}^{N}\right)} t^{p-1}\|u\|_{L^{p+1}\left(\mathbb{B}^{N}\right)}^{p+1}=\left(1-t^{p-1}\right)\|u\|_{H_{\la}}^{2}>0, \;\forall u \in U,\; \forall t \in[0,1) .
		\end{equation*}
		\item The goal of this step is to prove the following
		\begin{equation}
			\liminf _{n \rightarrow \infty}\left\{\tilde{J}\left(u_{n}\right)-\tilde{J}\left(t_{n} u_{n}\right)\right\}>0.
			\label{4fff}
		\end{equation}
		We can notice that $\tilde{J}\left(u_{n}\right)-\tilde{J}\left(t_{n} u_{n}\right)=\int_{t_{n}}^{1} \frac{d}{d t}\left\{\tilde{J}\left(t u_{n}\right)\right\} \mathrm{d} t$ and that for all $n \in \mathbb{N}$, there is $\xi_{n}>0$ such that $t_{n} \in\left(0,1-2 \xi_{n}\right)$ and $\frac{d}{d t} \tilde{J}\left(t u_{n}\right) \geq \alpha$ for $t \in\left[1-\xi_{n}, 1\right]$.
		
		To prove \eqref{4fff}, it is enough to show that $\xi_{n}>0$ can be chosen independent of $n \in \mathbb{N}$. But this is true because, by step 1, we have $\left.\frac{d}{d t} \tilde{J}\left(t u_{n}\right)\right|_{t=1} \geq \alpha$. Moreover, the boundedness of $\left\{u_{n}\right\}$ gives
		\begin{equation*}
			\left|{\frac{d^{2}}{d t^{2}} \tilde{J}\left(t u_{n}\right)}\right|=\left|\left\|u_{n}\right\|_{H_{\la}\left(\mathbb{B}^{N}\right)}^{2}-p\|a\|_{L^{\infty}\left(\mathbb{B}^{N}\right)} t^{p-1}\left\|u_{n}\right\|_{L^{p+1}\left(\mathbb{B}^{N}\right)}^{p+1}\right|=\left|\left(1-t^{p-1}\right)\left\|u_{n}\right\|_{H_{\la}}^{2}\right|\leq C,
		\end{equation*}
		for all $n \geq 1$ and $t \in[0,1]$.
		\item It straight away follows from the definition of $I_{\la,a, f}$ and $\tilde{J}$ that $\frac{d}{d t} I_{\la,a, f}(t u) \geq \frac{d}{d t} \tilde{J}(t u)$ for all $u \in H^{1}\left(\mathbb{B}^{N}\right)$ and for all $t>0$. Therefore,
		\begin{equation*}
			I_{\la,a, f}\left(u_{n}\right)-I_{\la,a, f}\left(t_{n} u_{n}\right)=\int_{t_{n}}^{1} \frac{d}{d t}\left(I_{\la,a, f}\left(t u_{n}\right)\right) \mathrm{d} t \geq \int_{t_{n}}^{1} \frac{d}{d t} \tilde{J}\left(t u_{n}\right) \mathrm{d} t=\tilde{J}\left(u_{n}\right)-\tilde{J}\left(t_{n} u_{n}\right).
		\end{equation*}
		Since $\left\{u_{n}\right\} \in U$ is a minimizing sequence for $I_{\la, a, f}$, and $t_{n} u_{n} \in U_{1}$, we deduce using \eqref{4fff} that
		\begin{equation*}
			c_{0}=\inf _{u \in U_{1}} I_{\la,a, f}(u)<\inf _{u \in U} I_{\la,a, f}(u) = c_{1}
		\end{equation*}
	\end{enumerate}
	This completes the proof of the lemma.
\end{proof}

It is worth mentioning explicitly the problem at infinity corresponding to \eqref{4.pp} :
\begin{equation}
	\begin{gathered}
		-\Delta_{\mathbb{B}^{N}} w-\lambda w=w_{+}^{p}, \; \text { in } \mathbb{B}^{N}, \;
		w \in H^{1}\left(\mathbb{B}^{N}\right). \label{3.dd}
	\end{gathered}
\end{equation}
and the associated functional $I_{\la,1,0}: H^{1}\left(\mathbb{B}^{N}\right) \rightarrow \mathbb{R}$ defined by
\begin{equation*}
	I_{\la,1,0}(u)=\frac{1}{2}\|u\|_{H_{\la}\left(\mathbb{B}^{N}\right)}^{2}-\frac{1}{p+1} \int_{\mathbb{B}^{N}} u_{+}^{p+1} \mathrm{~d}V_{\mathbb{B}^{N}} .
\end{equation*}
Define,
\begin{equation}
	X_{1}:=\left\{u \in H^{1}\left(\mathbb{B}^{N}\right) \backslash\{0\}:\left(I_{\la,1,0}\right)^{\prime}(u)=0\right\}, \quad S^{\infty}:=\inf _{X_{1}} I_{\la,1,0}.
	\label{4ghi}
\end{equation}
\begin{rem} 
	We can easily see $I_{\la,1,0}(u)=\frac{p-1}{2(p+1)}\|u\|_{H_{\la}}^{2}$ on $X_{1}$. Further,\eqref{3c} also gives $\|u\|_{H_{\la}}^{2} \geq S_{1,\la}^{\frac{p+1}{p-1}}$ on $X_{1}$. Consequently, $S^{\infty} \geq$ $\frac{p-1}{2(p+1)} S_{1,\la}^{\frac{p+1}{p-1}}>0$. Moreover, it is known from \cite{MS} that $S_{1,\la}$ is achieved by unique positive radial solution $w$ of \eqref{3d}. Therefore,
	\begin{equation*}
		I_{\la,1,0}\left(w\right)=\frac{p-1}{2(p+1)} S_{1,\la}^{\frac{p+1}{p-1}} .
	\end{equation*}
	Thus $S^{\infty}$ is achieved by $w$.
	\label{rm3}
\end{rem}
\begin{prop}
	
	Suppose \eqref{2aaa} and all the assumptions in the Theorem \ref{thm4bb} hold. Then there exists a critical point $u_{0}  \in U_{1}$ of $I_{\la,a, f}$ such that $I_{\la,a, f}\left(u_{0}\right)=c_{0}$. In particular, $u_{0}$ is a weak positive solution to \eqref{4.aa}.
	\label{crit1}
\end{prop}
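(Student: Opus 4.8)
The plan is to realize $u_0$ as a minimizer of $I_{\la,a, f}$ over the open set $U_1$, using the strict gap $c_0<c_1$ furnished by Lemma~\ref{lem2.2} to keep a minimizing sequence away from the boundary $U$, and then to upgrade weak convergence to strong convergence by means of the Palais--Smale decomposition of Proposition~\ref{prop-4.1}. I first record the two facts produced by (the proof of) Lemma~\ref{lem2.2} under \eqref{2aaa}: namely $c_0<0$ and $c_0<c_1$. For $u\in U_1$ with $g(u)>0$ one has $\int_{\bn}a\,u_+^{p+1}\,\vnn\le\|a\|_{L^\infty(\bn)}\|u\|_{L^{p+1}(\bn)}^{p+1}<\tfrac1p\|u\|_{H_\la}^2$, whence
\begin{equation*}
	I_{\la,a, f}(u)\ge\Big(\tfrac12-\tfrac{1}{p(p+1)}\Big)\|u\|_{H_\la}^2-\|f\|_{H^{-1}(\bn)}\|u\|_{H_\la},
\end{equation*}
so $I_{\la,a, f}$ is coercive on $U_1$ and every minimizing sequence is bounded.

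Since $U_1$ is open with interior $\{g>0\}$ and $c_0<\min\{0,c_1\}=\min\{I_{\la,a, f}(0),\inf_U I_{\la,a, f}\}$, I would apply Ekeland's variational principle on the complete set $\overline{U_1}$ to obtain a sequence $\{v_n\}$ with $I_{\la,a, f}(v_n)\to c_0$. Because $I_{\la,a, f}\ge c_1>c_0$ on $U$ and $I_{\la,a, f}(0)=0>c_0$, for large $n$ the point $v_n$ lies in the open interior $\{g>0\}$; testing the Ekeland inequality in all directions then gives $\|I_{\la,a, f}'(v_n)\|_{H^{-1}(\bn)}\to0$. Thus $\{v_n\}$ is a bounded Palais--Smale sequence for the free functional $I_{\la,a, f}$ at level $c_0$, and Proposition~\ref{prop-4.1} yields a critical point $u_0$ of $I_{\la,a, f}$, an integer $\ell\ge0$ and points $y_n^k$ with $d(y_n^k,0)\to\infty$ such that $v_n=u_0+\sum_{k=1}^\ell w(\tau_{-y_n^k}(\cdot))+o(1)$ in $H_\la$ and $c_0=I_{\la,a, f}(u_0)+\ell\,I_{\la,1,0}(w)$.

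The heart of the matter, and the main obstacle, is to show $\ell=0$. Using the asymptotic orthogonality of the translated bubbles together with the Brezis--Lieb splitting of the $L^{p+1}$ norm, I would pass to the limit in $g(v_n)=\|v_n\|_{H_\la}^2-p\|a\|_{L^\infty(\bn)}\|v_n\|_{L^{p+1}(\bn)}^{p+1}$ to obtain
\begin{equation*}
	0\le\lim_{n\to\infty}g(v_n)=g(u_0)+\ell\big(1-p\|a\|_{L^\infty(\bn)}\big)\|w\|_{H_\la}^2,
\end{equation*}
where I used the Nehari identity $\|w\|_{H_\la}^2=\|w\|_{L^{p+1}(\bn)}^{p+1}$ coming from the fact that $w$ solves \eqref{3d}. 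Since $a$ satisfies $\mathbf{A}_2$, one has $\|a\|_{L^\infty(\bn)}\ge1$, and with $p>1$ this forces $1-p\|a\|_{L^\infty(\bn)}<0$; hence the displayed inequality yields $g(u_0)\ge0$.

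To close the loop, the scaling computation of Remark~\ref{rm2} (namely $g(tu_0)=\|u_0\|_{H_\la}^2\,t^2(1-t^{p-1})>0$ for $t\in(0,1)$ when $g(u_0)=0$) shows $\{g\ge0\}\subset\overline{U_1}$, so $u_0\in\overline{U_1}$ and therefore $I_{\la,a, f}(u_0)\ge\inf_{\overline{U_1}}I_{\la,a, f}=c_0$, the added boundary $U$ carrying values $\ge c_1>c_0$. Combined with $c_0=I_{\la,a, f}(u_0)+\ell\,I_{\la,1,0}(w)$ and $I_{\la,1,0}(w)>0$ (Remark~\ref{rm3}), this gives $\ell=0$; consequently $v_n\to u_0$ strongly and $I_{\la,a, f}(u_0)=c_0<0$. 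In particular $u_0\ne0$, and $g(u_0)=0$ is excluded (it would place $u_0\in U$ with $I_{\la,a, f}(u_0)\ge c_1>c_0$), so $g(u_0)>0$: thus $u_0$ lies in the open interior of $U_1$ and is a genuine critical point of $I_{\la,a, f}$ attaining $c_0$. Finally, testing \eqref{4.pp} with $(u_0)_-$ and invoking $f\ge0$ as in Remark~\ref{rmk1.1} gives $u_0\ge0$, and the maximum principle upgrades this to $u_0>0$, so $u_0$ is a positive weak solution of \eqref{4.aa}. The whole argument rests on the sign $1-p\|a\|_{L^\infty(\bn)}<0$ (which is precisely where $a\ge1$ is used) to force $g(u_0)\ge0$, and on the strict gaps $c_0<c_1$ and $c_0<0<I_{\la,1,0}(w)$ supplied by \eqref{2aaa}.
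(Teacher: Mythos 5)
Your proof is correct and follows essentially the same route as the paper's: coercivity on $U_1$, Ekeland's principle to produce a Palais--Smale sequence at level $c_0$ kept inside $U_1$ by the gap $c_0<c_1$ from Lemma~\ref{lem2.2}, the Palais--Smale decomposition, and the sign computation $g(\mathrm{bubble})<0$ (from $p>1$ and $\|a\|_{L^{\infty}(\bn)}\geq 1$) combined with the additive splitting of $g$ along the decomposition to rule out bubbles. The only difference is organizational: the paper argues by contradiction (a bubble forces $I_{\la,a,f}(u_0)<c_0$, hence $g(u_0)\leq 0$, contradicting the $g$-splitting), whereas you run the same two facts in reverse order to conclude $\ell=0$ directly.
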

\begin{proof}
	We divide the proof into the following few steps.
	\begin{enumerate}[label = \textbf{Step \arabic*:}]
		\item $c_{0}>-\infty.$\\
		As $I_{\la,a, f}(u) \geq \tilde{J}(u)$ so, to prove Step 1, it is enough to show that $\tilde{J}$ is bounded from below. The definition of $U_{1}$ implies
		\begin{equation}
			\tilde{J}(u) \geq\left[\frac{1}{2}-\frac{1}{p(p+1)}\right]\|u\|_{H_{\la}}^{2}-\|f\|_{H^{-1}\left(\mathbb{B}^{N}\right)}\|u\|_{H_{\la}} \text { for all } u \in U_{1} .
			\label{4tt}
		\end{equation}
		Since the RHS of the above inequality is a quadratic function in $\|u\|_{H_{\la}}$ implies $\tilde{J}$ is bounded from below. Hence Step 1 follows.
		\item We aim to find a bounded PS sequence $\left\{u_{n}\right\} \subset U_{1}$ for $I_{\la,a,f}$ at the level $c_{0}$.\\
		Let $\left\{u_{n}\right\} \subset \bar{U}_{1}$ such that $I_{\la, a, f}\left(u_{n}\right) \rightarrow c_{0}$. As $I_{\la,a, f}(u) \geq \tilde{J}(u)$ so, from \eqref{4tt}, we get $\left\{u_{n}\right\}$ is a bounded sequence. Since by Lemma \ref{lem2.2}, $c_{0}<c_{1}$, without restriction we can assume $u_{n} \in U_{1}$. Therefore, by Ekeland's variational principle, we can extract a PS sequence from $\left\{u_{n}\right\}$ in $U_{1}$ for $I_{\la,a, f}$ at the level $c_{0}$. We still denote this PS sequence by $\left\{u_{n}\right\}$. Thus step 2 follows.
		\item In this step, we show that there exists $u_{0} \in U_{1}$ such that $u_{n} \rightarrow u_{0}$ in $H^{1}\left(\mathbb{B}^{N}\right)$.\\
		Applying PS decomposition (\ref{PS1.1}) gives
		\begin{equation}
			u_{n}-u_{0}-\sum_{i=1}^{m} w^{i}\left(\tau_{n}^{i}(x)\right) \rightarrow 0 \text { in } H^{1}\left(\mathbb{B}^{N}\right) \label{PS}
		\end{equation}
		for some $u_{0}$ such that $\left(I_{\la,a, f}\right)^{\prime}\left(u_{0}\right)=0$ and some appropriate $w^{i}$ and $\left\{\tau_{n}^{i}\right\}$. We will proceed by the method of contradiction to show that $m=0$, which in turn will imply step 3. Assume that there is $w^{i} \neq 0$ for $i \in\{1,2, \cdots, m\}$ such that $\left(I_{\la,1,0}\right)^{\prime}\left(w^{i}\right)=0$, i.e, $\left\|w^{i}\right\|_{H_{\la}}^{2}=\int_{\mathbb{B}^{N}}\left(w^{i}\right)_{+}^{p+1} \mathrm{~d}V_{\mathbb{B}^{N}}$. Therefore,
		\begin{equation*}
			\begin{aligned}
				g\left(w^{i}\right)&=\left\|w^{i}\right\|_{H_{\la}}^{2}-p\|a\|_{L^{\infty}\left(\mathbb{B}^{N}\right)}\left\|w^{i}\right\|_{L^{p+1}\left(\mathbb{B}^{N}\right)}^{p+1}\\
				&=\int_{\mathbb{B}^{N}}\left(w^{i}\right)_{+}^{p+1} \mathrm{~d}V_{\mathbb{B}^{N}}-p\|a\|_{L^{\infty}\left(\mathbb{B}^{N}\right)} \int_{\mathbb{B}^{N}}\left|w^{i}\right|^{p+1} \mathrm{~d}V_{\mathbb{B}^{N}} \\
				& \leq\left\|w^{i}\right\|_{L^{p+1}\left(\mathbb{B}^{N}\right)}^{p+1}\left(1-p\|a\|_{L^{\infty}\left(\mathbb{B}^{N}\right)}\right)<0,
			\end{aligned}
		\end{equation*}
		where for the last inequality, we have used that $p>1$ and $\|a\|_{L^{\infty}\left(\mathbb{B}^{N}\right)} \geq 1$. Further, using the Remark \ref{rm3}, we get  $I_{\la,1,0}\left(w^{i}\right) \geq S^{\infty}>0$ for all $1 \leq i \leq m$. Therefore, $I_{\la,a, f}\left(u_{n}\right) \rightarrow I_{\la,a, f}\left(u_{0}\right)+\sum_{i=1}^{m} I_{\la,1,0}\left(w_{i}\right)$ implies $I_{\la,a, f}\left(u_{0}\right)<c_{0}$. Thus $u_{0} \notin U_{1}$, i.e., $g\left(u_{0}\right) \leq 0 .$
		
		We have $g\left(u_{n}\right) \geq 0$ because $u_{n} \in U_{1}$. We now compute $g\left(u_{0}+\sum_{i=1}^{m} w^{i}\left(\tau_{n}^{i}(x)\right)\right)$. Thus \eqref{PS}  and uniform continuity of $g$ implies
		\begin{equation}
			0 \leq \liminf _{n \rightarrow \infty} g\left(u_{n}\right)=\liminf _{n \rightarrow \infty} g\left(u_{0}+\sum_{i=1}^{m} w^{i}\left(\tau_{n}^{i}(x)\right)\right).
			\label{4rr}
		\end{equation}
		On the other hand, as $\tau_{n}^{i}(0) \rightarrow \infty,\; d(\tau_{n}^{i}(0),\tau_{n}^{j}(0))\rightarrow \infty$ for $1 \leq i \neq j \leq m$
		the supports of $u_{0}(\bullet)$ and $w^{i}\left(\tau_{n}^{i}(\bullet)\right)$ are going increasingly far away as $n \rightarrow \infty$. Therefore,
		\begin{equation*}
			\lim _{n \rightarrow \infty} g\left(u_{0}+\sum_{i=1}^{m} w^{i}\left(\tau_{n}^{i}(x)\right)\right)=g\left(u_{0}\right)+\lim _{n \rightarrow \infty} \sum_{i=1}^{m} g\left(w^{i}\left(\tau_{n}^{i}(x)\right)\right)=g\left(u_{0}\right)+\sum_{i=1}^{m} g\left(w^{i}\right),
		\end{equation*}
		where the last equality follows from the translation invariance of $g$. Now because $g\left(u_{0}\right) \leq 0$ and $g\left(w^{i}\right)<0$ for $1\leq i \leq m$, we get a contradiction to \eqref{4rr}. This proves step 3.
		
		\item Using the previous steps, we can conclude that $I_{\la,a, f}\left(u_{0}\right)=c_{0}$ and $\left(I_{\la,a, f}\right)^{\prime}\left(u_{0}\right)=0$. Thus, $u_{0}$ is a weak solution to \eqref{4.pp}; combining this with Remark \ref{rmk1.1}, we complete the proof of the proposition. 
	\end{enumerate}
\end{proof}
\begin{prop}
	Assume \eqref{2aaa} holds. Then $I_{\la,a, f}$ has a second critical point $v_{0} \neq u_{0}$. In particular, $v_{0}$ is a positive solution to \eqref{4.aa}.
	\label{crit2}
\end{prop}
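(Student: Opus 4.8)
The plan is to obtain $v_{0}$ as a mountain pass critical point lying strictly above the local minimizer $u_{0}$ produced in Proposition~\ref{crit1}, and to recover compactness by placing the minimax level strictly below the first energy threshold at which the Palais--Smale decomposition can break.

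First I would set up the mountain pass geometry around $u_{0}$. Recall $I_{\la,a,f}(u_{0})=c_{0}<0<c_{1}$ by Proposition~\ref{crit1} and Lemma~\ref{lem2.2}. Fix $y\in\bn$ with $d(y,0)$ large and write $w_{y}:=w(\tau_{-y}(x))$. Since $g(w_{y})<0$ (as in Step~3 of Proposition~\ref{crit1}) and $I_{\la,a,f}(u_{0}+tw_{y})\to-\infty$ as $t\to\infty$ (because $p>1$), one checks that for $T>0$ large $u_{0}+Tw_{y}\in U_{2}$ and $I_{\la,a,f}(u_{0}+Tw_{y})<c_{0}$. Set
\[
\Gamma=\{\gamma\in C([0,1],H^{1}(\bn)):\gamma(0)=u_{0},\ \gamma(1)=u_{0}+Tw_{y}\},\qquad c=\inf_{\gamma\in\Gamma}\max_{s\in[0,1]}I_{\la,a,f}(\gamma(s)).
\]
Because $u_{0}\in U_{1}$ while $u_{0}+Tw_{y}\in U_{2}$, every $\gamma\in\Gamma$ meets $U$ by continuity of $g$, whence $\max_{s}I_{\la,a,f}(\gamma(s))\ge\inf_{U}I_{\la,a,f}=c_{1}$; thus $c\ge c_{1}>c_{0}$ and $c$ is a genuine minimax level.

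Second I would prove the crucial upper bound $c<c_{0}+I_{\la,1,0}(w)$. Using the expansion of Proposition~\ref{energy-prop} at the critical point $u_{0}$,
\[
I_{\la,a,f}(u_{0}+tw_{y})=I_{\la,a,f}(u_{0})+I_{\la,1,0}(tw)+(I)-(II),
\]
where $(II)\ge0$ by the elementary convexity inequality and, crucially, $(I)=\frac{t^{p+1}}{p+1}\int_{\bn}(1-a(x))w_{y}^{p+1}\,\vnn\le0$ since $a\ge1$; indeed $(I)<0$ for $t>0$ because $\mu(\{a\neq1\})>0$. Combining this with $I_{\la,1,0}(tw)\le\max_{s\ge0}I_{\la,1,0}(sw)=I_{\la,1,0}(w)$ gives $I_{\la,a,f}(u_{0}+tw_{y})<c_{0}+I_{\la,1,0}(w)$ for every $t>0$; since the left side tends to $-\infty$, its supremum over $t\ge0$ is attained and is strictly below $c_{0}+I_{\la,1,0}(w)$. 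Evaluating the minimax over the straight path $s\mapsto u_{0}+sTw_{y}$ then yields $c<c_{0}+I_{\la,1,0}(w)$. In contrast with the case $a\le1$ of Section~\ref{KER}, no delicate balancing of $(I)$ against $(II)$ is needed: the sign $a\ge1$ forces $(I)\le0$ outright.

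Third I would establish compactness. The mountain pass theorem furnishes a Palais--Smale sequence $\{u_{n}\}$ at level $c$, and by Proposition~\ref{prop-4.1}, along a subsequence $u_{n}=v_{0}+\sum_{k=1}^{\ell}w(\tau_{-y_{n}^{k}}(x))+o(1)$ with $v_{0}$ a critical point of $I_{\la,a,f}$ and $c=I_{\la,a,f}(v_{0})+\ell\,I_{\la,1,0}(w)$. I would rule out $\ell\ge1$: if $v_{0}\equiv0$ then $c=\ell I_{\la,1,0}(w)\ge I_{\la,1,0}(w)>c_{0}+I_{\la,1,0}(w)$ (as $c_{0}<0$), contradicting the upper bound; if $v_{0}\not\equiv0$ then $I_{\la,a,f}(v_{0})=c-\ell I_{\la,1,0}(w)<c_{0}$. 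To exclude the latter I would show $c_{0}$ is the least critical value: any nontrivial critical point $u^{\ast}$ is positive by Remark~\ref{rmk1.1}, and either $u^{\ast}\in\bar U_{1}$, giving $I_{\la,a,f}(u^{\ast})\ge c_{0}$ at once, or $u^{\ast}\in U_{2}$, in which case the fibering map $h(t)=I_{\la,a,f}(tu^{\ast})$ has at most two critical points by Lemma~\ref{lem4.cc}; since $h(0)=0$, $h'(0^{+})=-\langle f,u^{\ast}\rangle<0$, $h\ge c_{1}>0$ at the unique crossing $t^{\ast}u^{\ast}\in U$ with $t^{\ast}<1$, and $h'(1)=0$, a value $h(1)=I_{\la,a,f}(u^{\ast})<c_{0}<0$ would force a third critical point of $h$, a contradiction. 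Hence $\ell=0$, $u_{n}\to v_{0}$ strongly, and $v_{0}$ is a critical point with $I_{\la,a,f}(v_{0})=c>c_{0}=I_{\la,a,f}(u_{0})$, so $v_{0}\neq u_{0}$; by Remark~\ref{rmk1.1}, $v_{0}>0$ solves \eqref{4.aa}. The main obstacle is precisely this compactness step, and within it the identification of $c_{0}$ as the lowest critical level: because the minimax value is only controlled from above by $c_{0}+I_{\la,1,0}(w)$, excluding a split-off bubble forces one to rule out critical points of energy below $c_{0}$, which is exactly where the fibering analysis of Lemma~\ref{lem4.cc} enters.
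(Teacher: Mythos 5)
Your mountain-pass construction and the two level bounds are, modulo cosmetic differences, the paper's own argument. The paper joins $u_{0}$ to $u_{0}+w_{t_{0}}$ using the \emph{untranslated} bubble $w_{t}=tw$ and proves $I_{\la,a,f}(u_{0}+w_{t})<I_{\la,a,f}(u_{0})+I_{\la,1,0}(w_{t})$ for all $t>0$ (its Claim 2) by exactly the computation you cite: the criticality of $u_{0}$ removes the cross terms, the convexity inequality makes the interaction term nonnegative, and $a\ge 1$ absorbs the remaining term; this is your ``$(I)\le 0$, $(II)\ge 0$'' applied to the expansion of Proposition~\ref{energy-prop}, and your remark that no balancing of $(I)$ against $(II)$ is needed when $a\ge 1$ is precisely why Section~\ref{Secpf3} can bypass Section~\ref{KER}. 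Translating the bubble to infinity changes nothing, and the lower bound $c\ge c_{1}>c_{0}$ via the crossing of $U$ is also the paper's. (A small point: $g(w_{y})<0$ does not by itself yield $u_{0}+Tw_{y}\in U_{2}$; one needs the short Young-inequality computation of the paper's Claim 1, which is routine.)

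The genuine gap is in your compactness step, in the assertion $c_{1}=\inf_{U}I_{\la,a,f}>0$, on which your ``third critical point'' contradiction rests. The paper proves only $c_{0}<c_{1}$ (Lemma~\ref{lem2.2}) and $c_{0}<0$; positivity of $c_{1}$ is nowhere established, and it does not follow from \eqref{2aaa}: on $U$ one has $I_{\la,a,f}(u)\ge\bigl(\tfrac12-\tfrac{1}{p(p+1)}\bigr)\|u\|_{H_{\la}}^{2}-\langle f,u\rangle$, while \eqref{2aaa} only caps $\langle f,u\rangle$ by $\tfrac{p-1}{p}\|u\|_{H_{\la}}^{2}$, and $\tfrac12-\tfrac{1}{p(p+1)}<\tfrac{p-1}{p}$ for every $p>1$, so the sign of $c_{1}$ is left undetermined (taking $a$ close to a constant $\|a\|_{L^{\infty}}>1$ on a large ball and $f$ aligned with the minimal-norm element of $U$, with $\|f\|_{H^{-1}(\bn)}$ near the threshold, one can even force $c_{1}<0$ within the hypotheses of Theorem~\ref{thm4bb}). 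If $c_{1}\le 0$, your counting argument collapses: the fibering map $h$ may decrease monotonically through the crossing (with $h(t^{*})\in[c_{1},0)$), then have one interior minimum and the critical point at $t=1$ --- two critical points in all, consistent with Lemma~\ref{lem4.cc}(i), and no contradiction. The statement you are after (no critical value below $c_{0}$) is true, and proving it is indeed what the paper's one-line claim of strong convergence silently requires, but the proof must use the quantitative ingredients you did not invoke. Namely: if $u^{\ast}$ is a critical point with $h''(1)\ge 0$, the Euler equation gives $\bigl(1-\tfrac1p\bigr)\|u^{\ast}\|_{H_{\la}}^{2}\le\langle f,u^{\ast}\rangle$, hence $\|u^{\ast}\|_{H_{\la}}\le\tfrac{p}{p-1}\|f\|_{H^{-1}(\bn)}<\tfrac{p}{p-1}C_{p}S_{1,\la}^{\frac{p+1}{2(p-1)}}$, and the Poincar\'e--Sobolev inequality then forces $g(u^{\ast})>0$, i.e.\ $u^{\ast}\in U_{1}$ and $I_{\la,a,f}(u^{\ast})\ge c_{0}$; if instead $h''(1)<0$ and $u^{\ast}\in U_{2}$, then by Lemma~\ref{lem2.1} the crossing $t^{\ast}u^{\ast}\in U$ satisfies $h'(t^{\ast})>0$, and concavity of $t\mapsto t\|u^{\ast}\|_{H_{\la}}^{2}-t^{p}\int_{\bn}a(u^{\ast})_{+}^{p+1}\vnn$ yields $h'\ge 0$ on $[t^{\ast},1]$, whence $I_{\la,a,f}(u^{\ast})=h(1)\ge h(t^{\ast})\ge c_{1}>c_{0}$. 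With that replacement your Step 3, and hence the whole proof, goes through.
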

\begin{proof}
	For $u_{0}$ to be the critical point found in Proposition \ref{crit1} and $w$ to be as in Remark \ref{rm3}, set $w_{t}(x):=t w\left(x\right).$\\
	
	\textbf{Claim 1:} $u_{0}+w_{t} \in U_{2}$ for $t>0$ large enough.\\
	As $p>1$ and $\|a\|_{L^{\infty}\left(\mathbb{B}^{N}\right)} \geq 1$, we have
	\begin{equation*}
		\begin{aligned}
			g\left(u_{0}+w_{t}\right) & \leq\left\|u_{0}\right\|_{H_{\la}}^{2}+\left\|w_{t}\right\|_{H_{\la}}^{2}+2\left\langle u_{0}, w_{t}\right\rangle_{H_{\la}}-p\left(\left\|u_{0}\right\|_{L^{p+1}\left(\mathbb{B}^{N}\right)}^{p+1}+\left\|w_{t}\right\|_{L^{p+1}\left(\mathbb{B}^{N}\right)}^{p+1}\right) \\
			& \leq(1+\varepsilon)\left\|w_{t}\right\|_{H_{\la}}^{2}+(1+C(\varepsilon))\left\|u_{0}\right\|_{H_{\la}}^{2}-p\left(\left\|u_{0}\right\|_{L^{p+1}\left(\mathbb{B}^{N}\right)}^{p+1}+\left\|w_{t}\right\|_{L^{p+1}\left(\mathbb{B}^{N}\right)}^{p+1}\right)\\
			& = t^2(1+\varepsilon)\left\|w\right\|_{H_{\la}}^{2}+(1+C(\varepsilon))\left\|u_{0}\right\|_{H_{\la}}^{2}-p\left(\left\|u_{0}\right\|_{L^{p+1}\left(\mathbb{B}^{N}\right)}^{p+1}+t^{p+1}\left\|w\right\|_{L^{p+1}\left(\mathbb{B}^{N}\right)}^{p+1}\right),
		\end{aligned}
	\end{equation*}
	where the second last step follows from Young's inequality with $\varepsilon>0$. Moreover, as $w$  is the solution to \eqref{3d} implies
	\begin{equation*}
		\left\|w\right\|_{L^{p+1}\left(\mathbb{B}^{N}\right)}^{p+1}=\left\|w\right\|_{H_{\la}}^{2}.
	\end{equation*}
	Finally,
	\begin{equation*}
		\begin{gathered}
			g\left(u_{0}+w_{t}\right) \leq(1+C(\varepsilon))\left\|u_{0}\right\|_{H_{\la}}^{2}-p\left\|u_{0}\right\|_{L^{p+1}\left(\mathbb{B}^{N}\right)}^{p+1}+\left\|w\right\|_{H_{\la}}^{2}\left[(1+\varepsilon) t^{2}-p t^{p+1}\right] \\
		\end{gathered}
	\end{equation*}
	Thus choosing $\varepsilon>0$ such that $1+\varepsilon<p$ gives $g\left(u_{0}+w_{t}\right)<0$ for $t>0$ large enough. Hence the claim follows.\\
	\textbf{Claim 2:} $I_{\la,a, f}\left(u_{0}+w_{t}\right)<I_{\la,a, f}\left(u_{0}\right)+I_{\la,1,0}\left(w_{t}\right) \forall t>0$.\\
	As $u_{0}, w_{t}>0$, using $w_{t}$ as the test function for \eqref{4.pp} yields
	\begin{equation*}
		\left\langle u_{0}, w_{t}\right\rangle_{H_{\la}}=\int_{\mathbb{B}^{N}} a(x) u_{0}^{p} w_{t}\mathrm{~d}V_{\mathbb{B}^{N}}+\left\langle f, w_{t}\right\rangle .
	\end{equation*}
	Therefore, utilizing the above expression and assumption $a \geq 1$, we compute the following
	\begin{equation*}
		\begin{aligned}
			I_{\la,a, f}\left(u_{0}+w_{t}\right) =& \frac{1}{2}\left\|u_{0}\right\|_{H_{\la}}^{2}+\frac{1}{2}\left\|w_{t}\right\|_{H_{\la}}^{2}+\left\langle u_{0}, w_{t}\right\rangle_{H_{\la}} \\
			&-\frac{1}{p+1} \int_{\mathbb{B}^{N}} a(x)\left(u_{0}+w_{t}\right)^{p+1} \mathrm{~d}V_{\mathbb{B}^{N}}(x)-\left\langle f, u_{0}\right\rangle-\left\langle f, w_{t}\right\rangle \\
			=& I_{\la,a, f}\left(u_{0}\right)+I_{\la,1,0}\left(w_{t}\right)+\left\langle u_{0}, w_{t}\right\rangle_{H_{\la}}+\frac{1}{p+1} \int_{\mathbb{B}^{N}} a(x) u_{0}^{p+1} \mathrm{~d}V_{\mathbb{B}^{N}}(x)\\
			&+\frac{1}{p+1} \int_{\mathbb{B}^{N}} w_{t}^{p+1} \mathrm{~d}V_{\mathbb{B}^{N}}-\frac{1}{p+1} \int_{\mathbb{B}^{N}} a(x)\left(u_{0}+w_{t}\right)^{p+1} \mathrm{~d}V_{\mathbb{B}^{N}}-\left\langle f, w_{t}\right\rangle \\
			\leq &I_{\la,a, f}\left(u_{0}\right)+I_{\la,1,0}\left(w_{t}\right)\\
			&+\frac{1}{p+1} \int_{\mathbb{B}^{N}} a(x)\left[(p+1) u_{0}^{p} w_{t}+u_{0}^{p+1}+w_{t}^{p+1}-\left(u_{0}+w_{t}\right)^{p+1}\right] \mathrm{~d}V_{\mathbb{B}^{N}}(x) \\
			< &I_{\la,a, f}\left(u_{0}\right)+I_{\la,1,0}\left(w_{t}\right)
		\end{aligned}
	\end{equation*}
	This proves the claim. Further, the straightforward calculation gives
	\begin{equation}
		I_{\la,1,0}\left(w_{t}\right)=\frac{t^{2}}{2}\left\|w\right\|_{H_{\la}}^{2}-\frac{t^{p+1}}{p+1}\left\|w\right\|_{L^{p+1}\left(\mathbb{B}^{N}\right)}^{p+1} \rightarrow-\infty \quad \text { as } \quad t \rightarrow \infty.
		\label{4eee}
	\end{equation}
	From \eqref{4eee} and Remark \ref{rm3}, we have
	\begin{equation*}
		\sup _{t>0} I_{\la,1,0}\left(w_{t}\right)=I_{\la,1,0}\left(w_{1}\right)=I_{\la,1,0}\left(w\right)=S^{\infty}.
	\end{equation*}
	Combing this with Claim 2 yields
	\begin{equation}
		I_{\la,a, f}\left(u_{0}+w_{t}\right)<I_{\la,a, f}\left(u_{0}\right)+S^{\infty} \quad \forall t>0 .
		\label{4mnp}
	\end{equation}
	Claim 2, together with \eqref{4eee}, results in
	\begin{equation}
		I_{\la,a, f}\left(u_{0}+w_{t}\right)<I_{\la,a, f}\left(u_{0}\right) \quad \text { for } \quad t \text { large enough. }
		\label{4mno}
	\end{equation}
	We now fix $t_{0}>0$ large enough such that \eqref{4mno} and Claim 1 are satisfied.
	Then set
	\begin{equation*}
		\gamma:=\inf _{i \in \Gamma} \max _{t \in[0,1]} I_{\la,a, f}(i(t)),
	\end{equation*}
	where
	\begin{equation*}
		\Gamma:=\left\{i \in C\left([0,1], H^{1}\left(\mathbb{B}^{N}\right)\right): i(0)=u_{0}, \; i(1)=u_{0}+w_{t_{0}}\right\}
	\end{equation*}
	As $u_{0} \in U_{1}$ and $u_{0}+w_{t_{0}} \in U_{2}$, for every $i \in \Gamma$, there exists $t_{i} \in(0,1)$ such that $i\left(t_{i}\right) \in U$. Therefore,
	\begin{equation*}
		\max _{t \in[0,1]} I_{\la,a, f}(i(t)) \geq I_{\la,a, f}\left(i\left(t_{i}\right)\right) \geq \inf _{U} I_{\la,a, f}(u)=c_{1} .
	\end{equation*}
	Thus, using Lemma \ref{lem2.2}, we have $\gamma \geq c_{1}>c_{0}=I_{\la,a, f}\left(u_{0}\right)$.\\\\
	\textbf{Claim 3:} For $S^{\infty}$, as defined in \eqref{4ghi}, $\gamma<I_{\la,a, f}\left(u_{0}\right)+S^{\infty}$.\\
	Observe that $\lim _{t \rightarrow 0}\left\|w_{t}\right\|_{H_{\la}}=0$. Thus, if we define $\tilde{i}(t)=u_{0}+w_{tt_{0}}$, \\ 
	then $\lim _{t \rightarrow 0}\left\|\tilde{i}(t)-u_{0}\right\|_{H_{\la}}=0$. As a result, $\tilde{i} \in \Gamma$. Therefore, using \eqref{4mnp} will give us
	\begin{equation*}
		\gamma \leq \max _{t \in[0,1]} I_{\la,a, f}(\tilde{i}(t))=\max _{t \in[0,1]} I_{\la,a, f}\left(u_{0}+w_{t t_{0}}\right)<I_{\la,a, f}\left(u_{0}\right)+S^{\infty}
	\end{equation*}
	Hence the claim follows.
	Thus
	\begin{equation*}
		I_{\la,a, f}\left(u_{0}\right)<\gamma<I_{\la,a, f}\left(u_{0}\right)+S^{\infty} .
	\end{equation*}
	Applying Ekeland's variational principle, there exists a PS sequence $\left\{u_{n}\right\}$ for $I_{\la,a, f}$ at the level $\gamma$. Also, note that $\left\{u_{n}\right\}$ is a bounded sequence. Further, from PS decomposition and Remark \eqref{rm3}, we have  $S^{\infty}=I_{\la,1,0}\left(w\right)$ and $u_{n} \rightarrow v_{0}$ for some $v_{0} \in H^{1}\left(\mathbb{B}^{N}\right)$ such that $\left(I_{\la,a, f}\right)^{\prime}\left(v_{0}\right)=0$ and $I_{\la,a, f}\left(v_{0}\right)=\gamma$. Further, as $I_{\la,a, f}\left(u_{0}\right)<\gamma$, we conclude $v_{0} \neq u_{0}$.
	Finally, $\left(I_{\la,a, f}\right)^{\prime}\left(v_{0}\right)=0$, along with  the Remark \ref{rmk1.1}, completes the proof of the proposition.
\end{proof}
\begin{lem}
	If $\|f\|_{H^{-1}\left(\mathbb{B}^{N}\right)}<C_{p} S_{1,\la}^{\frac{p+1}{2(p-1)}}$, then \eqref{2aaa} holds.
	\label{leml}
\end{lem}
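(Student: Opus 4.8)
The plan is to reduce the functional inequality \eqref{2aaa} to a one-variable estimate in the quantity $s:=\|u\|_{H_{\la}}$, using the Poincar\'e--Sobolev inequality to bound $s$ from below and the duality pairing to bound $\langle f,u\rangle$ from above. First I would fix an admissible $u\in H^1(\bn)$ with $\|u\|_{L^{p+1}(\bn)}=1$ and record two elementary facts. By the very definition of $S_{1,\la}$ in \eqref{3c} one has $\|u\|_{H_{\la}}^2\geq S_{1,\la}\,\|u\|_{L^{p+1}(\bn)}^2=S_{1,\la}$, hence the floor
\begin{equation*}
	\|u\|_{H_{\la}}\geq S_{1,\la}^{1/2},
\end{equation*}
while the duality of $H^{-1}(\bn)$ with $(H^1(\bn),\|\cdot\|_{H_{\la}})$ gives $\langle f,u\rangle\leq \|f\|_{H^{-1}(\bn)}\|u\|_{H_{\la}}$.

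Next I would insert the duality bound into the expression to be estimated and factor out one power of $s=\|u\|_{H_{\la}}$, invoking the exponent identity $\frac{2p}{p-1}-1=\frac{p+1}{p-1}$, to obtain
\begin{equation*}
	C_{p}\|u\|_{H_{\la}}^{\frac{2p}{p-1}}-\langle f,u\rangle \;\geq\; s\left(C_{p}\,s^{\frac{p+1}{p-1}}-\|f\|_{H^{-1}(\bn)}\right).
\end{equation*}
The decisive observation is that the bracketed factor is increasing in $s$, so the floor $s\geq S_{1,\la}^{1/2}$ yields $s^{\frac{p+1}{p-1}}\geq S_{1,\la}^{\frac{p+1}{2(p-1)}}$, and consequently
\begin{equation*}
	C_{p}\|u\|_{H_{\la}}^{\frac{2p}{p-1}}-\langle f,u\rangle \;\geq\; S_{1,\la}^{1/2}\left(C_{p}\,S_{1,\la}^{\frac{p+1}{2(p-1)}}-\|f\|_{H^{-1}(\bn)}\right).
\end{equation*}

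Finally I would appeal to the hypothesis $\|f\|_{H^{-1}(\bn)}<C_{p}\,S_{1,\la}^{\frac{p+1}{2(p-1)}}$, which makes the right-hand side a strictly positive constant, crucially independent of $u$; taking the infimum over all $u$ with $\|u\|_{L^{p+1}(\bn)}=1$ then gives \eqref{2aaa}. The computation is short, so there is no serious analytic obstacle; the only genuine subtlety, which I would emphasize, is guaranteeing that the lower bound is \emph{uniform} in $u$. This is exactly what the factorization achieves: it collapses the two $u$-dependent quantities $\|u\|_{H_{\la}}$ and $\langle f,u\rangle$ into a single monotone one-parameter bound whose minimum over the Poincar\'e--Sobolev admissible range $s\in[S_{1,\la}^{1/2},\infty)$ is attained at the endpoint $s=S_{1,\la}^{1/2}$, precisely where the threshold on $\|f\|_{H^{-1}(\bn)}$ appears.
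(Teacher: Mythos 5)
Your proof is correct, but it follows a genuinely different route from the paper's. The paper never works on the $L^{p+1}$-unit sphere directly: it first fixes $\varepsilon>0$ with $\|f\|_{H^{-1}\left(\mathbb{B}^{N}\right)}<C_{p} S_{1,\la}^{\frac{p+1}{2(p-1)}}-\varepsilon$, then estimates $\langle f,u\rangle$ on the Nehari-type set $U=\left\{u\neq 0: \|u\|_{H_{\la}}^{2}=p\|a\|_{L^{\infty}\left(\mathbb{B}^{N}\right)}\|u\|_{L^{p+1}\left(\mathbb{B}^{N}\right)}^{p+1}\right\}$ using Lemma \ref{lem2.1} (the bound $\frac{p-1}{p}\|u\|_{H_{\la}} \geq C_{p} S_{1,\la}^{\frac{p+1}{2(p-1)}}$ on $U$) together with Remark \ref{rm1} (that $\|u\|_{H_{\la}}$ is bounded away from $0$ on $U$), obtaining $\inf_{U}\left[\frac{p-1}{p}\|u\|_{H_{\la}}^{2}-\langle f,u\rangle\right]>0$; it then closes the argument by the chain of equivalences showing that \eqref{2aaa} is the same statement as this positivity on $U$, using the degree-one homogeneity of $u\mapsto C_{p}\|u\|_{H_{\la}}^{\frac{2p}{p-1}}\|u\|_{L^{p+1}\left(\mathbb{B}^{N}\right)}^{-\frac{p+1}{p-1}}-\langle f,u\rangle$ and the constraint defining $U$. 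You instead stay on the constraint set appearing in \eqref{2aaa} itself, replace the role of Lemma \ref{lem2.1} by the direct Poincar\'e--Sobolev floor $\|u\|_{H_{\la}}\geq S_{1,\la}^{1/2}$ from \eqref{3c}, and conclude by a one-variable factorization and monotonicity argument. Both arguments ultimately rest on the same two ingredients (the sharp Sobolev constant and duality), but yours is more self-contained — it avoids the set $U$, the equivalence chain, and the auxiliary $\varepsilon$ — and it produces an explicit uniform lower bound $S_{1,\la}^{1/2}\left(C_{p} S_{1,\la}^{\frac{p+1}{2(p-1)}}-\|f\|_{H^{-1}\left(\mathbb{B}^{N}\right)}\right)$ for the infimum, which the paper's qualitative argument does not display. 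What the paper's route buys is coherence with the rest of Section \ref{Secpf3}: the set $U$, Lemma \ref{lem2.1}, and the reformulation of \eqref{2aaa} as positivity of $\frac{p-1}{p}\|u\|_{H_{\la}}^{2}-\langle f,u\rangle$ on $U$ are exactly the objects reused in Step 1 of Lemma \ref{lem2.2}, so the equivalence is stated once and exploited twice.
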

\begin{proof}
	We can find an $\varepsilon>0$ such that $\|f\|_{H^{-1}\left(\mathbb{B}^{N}\right)}<C_{p} S_{1,\la}^{\frac{p+1}{2(p-1)}}-\varepsilon$ using the given assumption. Therefore, using Lemma \ref{lem2.1}. we have
	\begin{equation*}
		\langle f, u\rangle \leq\|f\|_{H^{-1}\left(\mathbb{B}^{N}\right)}\|u\|_{H_{\la}}<\left[C_{p} S_{1,\la}^{\frac{p+1}{2(p-1)}}-\varepsilon\right]\|u\|_{H_{\la}\left(\mathbb{B}^{N}\right)} \leq \frac{p-1}{p}\|u\|_{H_{\la}}^{2}-\varepsilon\|u\|_{H_{\la}\left(\mathbb{B}^{N}\right)},\; \forall u \in U. 
	\end{equation*}
	Thus
	\begin{equation*}
		\inf _{U}\left[\frac{p-1}{p}\|u\|_{H_{\la}}^{2}-\langle f, u\rangle\right] \geq \varepsilon \inf _{U}\|u\|_{H_{\la}}.
	\end{equation*}
	Moreover, Remark \ref{rm1} gives us that $\|u\|_{H_{\la}}$ is bounded away from 0 on $U$, so the above expression yields
	\begin{equation*}
		\inf _{U}\left[\frac{p-1}{p}\|u\|_{H_{\la}}^{2}-\langle f, u\rangle\right]>0 .
	\end{equation*}
	On the other hand,
	\begin{equation*}
		\begin{aligned}
			\eqref{2aaa} & \Leftrightarrow C_{p} \frac{\|u\|_{H_{\la}}^{\frac{2 p}{p-1}}}{\|u\|_{L^{p+1}\left(\mathbb{B}^{N}\right)}^{\frac{p+1}{p-1}}}-\langle f, u\rangle>0 \quad \text { for } \quad\|u\|_{L^{p+1}\left(\mathbb{B}^{N}\right)}=1 \\
			&\Leftrightarrow \frac{\|u\|_{H_{\la}}^{\frac{2 p}{p-1}}}{\|u\|_{L^{p+1}\left(\mathbb{B}^{N}\right)}^{\frac{p+1}{p-1}}}-\langle f, u\rangle>0 \quad \text { for } \quad u \in U\\
			& \Leftrightarrow \frac{p-1}{p}\|u\|_{H_{\la}}^{2}-\langle f, u\rangle >0 \quad \text { for } \quad u \in U.
		\end{aligned}
	\end{equation*}
	Hence the lemma follows.
\end{proof}
Combining Proposition \ref{crit1} and Proposition \ref{crit2}  with Lemma \ref{leml}, we conclude the proof of Theorem \ref{thm4bb}.

\section{\textbf{Acknowledgments.}}
D.~Ganguly is partially supported by the INSPIRE faculty fellowship (IFA17-MA98).  
D.~Gupta is supported by the PMRF fellowship.

\end{document}